\newtheorem{thmalpha}{Theorem}
\newtheorem{lemmaalpha}[thmalpha]{Lemma}
\newtheorem{thm}{Theorem}
\newtheorem{lemma}[thm]{Lemma}
\newtheorem{prop}[thm]{Proposition}
\newtheorem{cor}[thm]{Corollary}
\theoremstyle{definition}
\newtheorem*{definition*}{Definition}
\newtheorem{rem}{Remark}
\newtheorem*{cla}{Claim}
\newcommand{\Aut}{\text{Aut}}
\newcommand{\Good}{\text{Good}}
\newcommand{\cA}{\mathcal{A}}
\newcommand{\cB}{\mathcal{B}}
\newcommand{\cE}{\mathcal{E}}
\newcommand{\cF}{\mathcal{F}}
\newcommand{\cG}{\mathcal{G}}
\newcommand{\cH}{\mathcal{H}}
\newcommand{\cJ}{\mathcal{J}}
\newcommand{\cT}{\mathcal{T}}
\newcommand{\cU}{\mathcal{U}}
\newcommand{\cV}{\mathcal{V}}
\renewcommand{\Pr}{\mathbf{Pr}}
\newcommand{\sm}{\setminus}
\newcommand{\removedStuff}[1]%
{{\leavevmode\color{blue}{{\tiny 
here is some stuff I removed or moved elsewhere:
#1
}}}}
\renewcommand{\removedStuff}[1]{}
    \let\@fnsymbol\@arabic
\title{Local convergence and stability \\of tight bridge-addable graph classes}
\author[$\clubsuit$]{Guillaume Chapuy%
\thanks{%
Support from \emph{Agence Nationale de la Recherche}, grant number ANR~12-JS02-001-01 ``Cartaplus'', and from the City of Paris, grant ``\'Emergences Paris 2013, Combinatoire \`a Paris''.
Email:~{\tt guillaume.chapuy@liafa.univ-paris-diderot.fr}.
}
}
\author[$\spadesuit$]{Guillem Perarnau%
\thanks{
Email:~{\tt g.perarnau@bham.ac.uk}.
}
}
\affil[$\clubsuit$]{\small\it {\sc IRIF, UMR CNRS 8243},
Universit\'e Paris-Diderot,
France.}
\affil[$\clubsuit$]{\small\it {\sc CRM, UMI CNRS 3457},
Universit\'e de Montr\'eal,
Canada.
}
\affil[$\spadesuit$]{\small\it School of Mathematics, University of Birmingham, Birmingham, United Kingdom.
}
\begin{document}

\maketitle

\vspace{-10mm}
\begin{abstract}
A class of graphs is \emph{bridge-addable} if given a graph $G$ in the class, any graph obtained by adding an edge between two connected components of $G$ is also in the class. 
The authors recently proved a conjecture of McDiarmid, Steger, and Welsh stating that if $\mathcal{G}$ is bridge-addable and $G_n$ is a uniform $n$-vertex graph from $\mathcal{G}$, then $G_n$ is connected with probability at least $(1+o_n(1))e^{-1/2}$. The constant $e^{-1/2}$ is best possible since it is reached for the class of all forests.

In this paper we prove a form of uniqueness in this statement: if $\mathcal{G}$ is a bridge-addable class and the random graph $G_n$ is connected with probability close to  $e^{-1/2}$, then $G_n$ is asymptotically close to a uniform $n$-vertex random forest in some local sense.
For example, if the probability converges to $e^{-1/2}$, then $G_n$ converges in the sense of Benjamini-Schramm to 
the uniform infinite random forest $F_\infty$.
This result is reminiscent of so-called ``stability results'' in extremal graph theory, with the difference that here the stable extremum is not a graph but a graph class.

\end{abstract}

\section{Introduction and Main Results}

In this paper graphs are simple. A graph is labeled if its vertex set is of the form $[1..n]$ for some $n\geq 1$. An unlabeled graph  is an equivalence class of labeled graphs by relabeling. Unless mentioned otherwise, graphs are labeled.
A class of (labeled) graphs $\mathcal{G}$ is \emph{bridge-addable} if given a graph $G$ in the class, and an edge $e$ of $G$ whose endpoints belong to two distinct connected components, then $G\cup\{e\}$ is also in the class.
Examples of bridge-addable classes include planar graphs, forests, or $H$-free graphs where $H$ is any 2-edge connected graph (see many more examples in~\cite{ABMCR,CP15}).
 
McDiarmid, Steger and Welsh~\cite{MCSW} conjectured that every bridge-addable class of graphs with $n$ vertices contains at least a proportion $(1+o_{n}(1))e^{-1/2}$ of connected graphs. This has recently been proved by the authors.
In the next statement and later, we denote by $\cG_n$ the set of graphs in $\cG$ with $n$ vertices, and by $G_n$ a uniform random element of $\cG_n$.
\begin{thmalpha}\label{thm:conj}[Chapuy, Perarnau~\cite{CP15}]
For every $\epsilon>0$, there exists an $n_0$ such that for every bridge-addable class $\cG$  and every $n\geq n_0$,  we have
\begin{align}\label{eq:conj}
\Pr \left( G_n \mbox{ is connected} \right) \geq (1-\epsilon)e^{-1/2}.
\end{align}
\end{thmalpha}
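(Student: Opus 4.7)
The plan is a refined double counting between the connected and disconnected graphs of $\cG_n$. Writing $a_k$ for the number of graphs in $\cG_n$ with exactly $k$ components, the statement becomes $a_1 \geq (1-\epsilon)\,e^{-1/2}\sum_k a_k$. As a warm-up I would first reprove the $e^{-1}$ bound of McDiarmid--Steger--Welsh: for any $G \in \cG_n$ with $k \geq 2$ components of sizes $n_1,\dots,n_k$, bridge-addability allows us to add any set $S$ of $k-1$ edges forming a spanning tree on the component-contracted graph, and by the weighted Cayley formula the number of such sets is $n^{k-2}\prod_i n_i \geq n^{k-2}(n-k+1)$. Each connected $H\in\cG_n$ has at most $n-1$ bridges and hence is reached from at most $\binom{n-1}{k-1}$ such sets. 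This yields $a_k \leq (1+o(1))\,a_1/(k-1)!$, whence $\sum_k a_k \leq (e+o(1))\,a_1$.

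\textbf{Sharpening to $e^{-1/2}$.} The two inequalities above are saturated simultaneously only by forests: $\prod n_i \geq n-k+1$ is tight exactly for profiles $(n-k+1,1,\dots,1)$, and $H$ attains $n-1$ bridges only if it is a tree. Moreover the target constant satisfies $e^{-1/2}=\lim_n |\mathrm{Trees}_n|/|\mathrm{Forests}_n|$, so the extremal class is essentially that of forests. To close the gap, I would refine the forward count to track the actual product $\prod_i n_i$ rather than the pessimistic lower bound $n-k+1$, and show by a convexity (Jensen) argument over the component profile that, averaged appropriately over $\cG_n$, the product gains a factor $e^{1/2}$ relative to the worst case. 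In parallel, I would partition $\cG_n$ according to the number $s$ of singleton components: for small $s$ the forward count directly yields the required gain, while for large $s$ the argument is supplemented with a second-order local surgery (for example a bridge-swap involving a non-bridge incident to a large component) that injects an extra $e^{-1/2}$ factor on the reverse side.

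\textbf{Main obstacle.} The whole difficulty lies in the sharpness of the constant, since essentially any naive refinement of MSW produces a constant strictly between $e^{-1}$ and $e^{-1/2}$. The delicate point is to balance the forward gain from $\prod n_i$ against the reverse loss caused by $H$ being nearly but not exactly a tree, uniformly over all bridge-addable classes $\cG$. I expect this will require a Cauchy--Schwarz style equality argument showing that no bridge-addable class can simultaneously saturate the forward and reverse bounds beyond what forests achieve, together with a careful control of the $o_n(1)$ error that cannot depend on $\cG$.
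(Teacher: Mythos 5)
This statement (Theorem~A) is not proved in the present paper; it is cited from the authors' earlier work~\cite{CP15}. Your proposal should therefore be compared against the actual strategy of~\cite{CP15}, as sketched in Section~\ref{sec:trees} of this paper.

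Your warm-up is fine: the forward count via the generalised Cayley formula and the reverse count via bridges does recover the $e^{-1}$ bound, and the observation that both inequalities are simultaneously saturated only when the class is essentially a class of forests is the correct intuition. However, the sharpening to $e^{-1/2}$ is where the proposal has a genuine gap, and the proposed route would not close it.

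First, the Jensen/convexity argument over the component-size profile cannot supply the factor $e^{1/2}$. Near the extremal configuration the components are one giant plus $k-1$ singletons, so $\prod_i n_i$ is already $\approx n-k+1$ and there is essentially no gain from tracking the product more finely. The factor $e^{1/2}$ in the extremal ratio does not come from the component profile at all: it comes from the \emph{internal} structure of the giant component (how many pendant copies of each small tree it carries) and from the fact that a near-tree has \emph{cycles}, not just ``nearly $n-1$ bridges''. A profile-level argument has no access to this. Second, the remaining ingredients you invoke --- the ``second-order local surgery'' (bridge-swap) and the ``Cauchy--Schwarz style equality argument'' --- are exactly where the difficulty lives, and they are stated as placeholders with no mechanism; there is no identified quantity to which Cauchy--Schwarz would be applied, and no precise injection/double-count behind the surgery. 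This is not a small omission: essentially any naive refinement of the MSW count stalls strictly between $e^{-1}$ and $e^{-1/2}$, as you yourself note, so the burden is precisely to supply that mechanism.

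For comparison, the proof in~\cite{CP15} proceeds along a genuinely different axis: (i) reduce to bridge-addable classes of forests via the Balister--Bollob\'as--Gerke block-replacement trick; (ii) partition $\cG_n$ into ``boxes'' indexed by the vector of pendant-tree statistics $(\alpha^G(T))_{T\in\cT_*}$, and perform a \emph{local} double-count between one- and two-component graphs box by box (this is Lemma~\ref{lemma:CP15Prop4} localised); (iii) encode the resulting constraints as a continuous optimisation problem over the vector $\mathbf{z}=(z^U)_{U\in\cU_\epsilon}$, with objective $Y^u(\mathbf{z})$ and feasible region $D$; and (iv) show via the dissymmetry theorem and the singular analysis of the tree/forest generating functions (Lemma~\ref{lemma:evalGF}) that $\max_{D} Y^u = \tfrac{1}{2}$, which is what produces the constant $e^{-1/2}$. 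In short, the extremal factor comes out of a generating-function identity for trees, not a convexity inequality over component profiles. Your proposal lacks all of steps (ii)--(iv), which are precisely what makes the sharp constant attainable.
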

\noindent If $\cG$ is the class of all forests (which is bridge-addable), then Theorem~\ref{thm:conj} is asymptotically tight, since it is shown in~\cite{Renyi} that if $F_n$ is a uniform random forest on $n$ vertices, then as $n$ goes to infinity:
\begin{eqnarray}\label{eq:Renyi}
\Pr\left(F_n \mbox{ is connected}\right) \longrightarrow e^{-1/2}.
\end{eqnarray}
The aim of this paper is to show that, in some appropriate sense, this class is
locally the only one
 for which Theorem~\ref{thm:conj} is asymptotically tight. More precisely, we will show that any bridge-addable class of graphs that comes close to achieving the constant $e^{-1/2}$ is ``close'' to a uniform random forest in some local 
sense.

\begin{definition*}
For any $\zeta>0$, we say that a bridge-addable class of graphs $\cG$ is \emph{$\zeta$-tight with respect to connectivity} (or simply \emph{$\zeta$-tight})  
if there exists an $n_0$ such that for every $n\geq n_0$ we have
\begin{align*}
\Pr \left( G_n \mbox{ is connected} \right)\leq (1+\zeta)e^{-1/2}\;,
\end{align*}
where we recall that $G_n$ is chosen uniformly at random from $\cG_n$.
\end{definition*}

In order to state our results, we first need to introduce some notation and terminology. If $H$ is a graph we let $|H|$ be its number of vertices.
We denote by $\cU$ the set of \emph{unlabeled, unrooted} trees 
 and by $\cT$ the set of \emph{unlabeled, rooted} trees, \textit{i.e.} trees with a marked vertex called the root.  For every unrooted tree $U\in \cU$, we denote by $\Aut_u(U)$ the number of automorphisms of $U$, and for every rooted tree $T\in \cT$, we denote by $\Aut_r(T)$ the number of automorphisms of $T$ that fix its root. 
Moreover given $k$ unrooted trees $U_1,\dots, U_k$ in $\cU$, we denote by $\Aut_u(U_1,\dots,U_k)$ the number of automorphisms of the forest formed by disjoint copies of $U_1,\dots,U_k$.

\newcommand{\sma}{\mathop{Small}}
Given a graph $H$, we let $\sma(H)$ denote the graph formed by all the components of $H$ that are not the largest one (in case of a tie, we say that the largest component of the graph is the one with the largest vertex label among
 all candidates). 
In what follows, we will always see $\sma(H)$ as an unlabeled graph. 
Given a graph $G$ and a rooted tree $T\in \cT$, we let $\alpha^G(T)$ be the number of pendant copies of the tree $T$ in $G$. More precisely, $\alpha^G(T)$ is the number of vertices $v$ of $G$ having the following property: there is at least one cut-edge $e$ incident to $v$, and if we remove the such cut-edge that separates $v$ from the largest possible component, the vertex $v$ lies in a component of the graph that is a tree, rooted at $v$, which is isomorphic to $T$.
The following is classical:
\begin{thmalpha}[{see Appendix~\ref{app:gf}}]\label{thmalpha:randomForests}
Let $F_n$ be a uniform random forest with $n$ vertices. Then for any fixed unlabeled unrooted forest ${\mathbf{f}}$ we have as $n$ goes to infinity:
\begin{eqnarray}\label{eq:convSmallForest}
\Pr\big(\sma(F_n) \equiv {\bf f}\big)
\longrightarrow p_\infty ({\bf f}) :=e^{-1/2}\frac{e^{-|{\bf f}|}}{\Aut_u({\bf f})}, 
\end{eqnarray}
where $\equiv$ denotes unlabeled graph isomorphism.
Moreover, $p_\infty$ is a probability distribution on the set of unlabeled unrooted forests.

For any fixed rooted tree $T\in \cT$ we have as $n$ goes to infinity:
\begin{eqnarray}\label{eq:convAlphaForest}
\frac{\alpha^{F_n}(T)}{n} \stackrel{(p)}{\longrightarrow} a_\infty(T)
:=\frac{e^{-|T|}}{\Aut_r(T)}.
\end{eqnarray}
 where (p) indicates convergence in probability.
Moreover $a_\infty$ is a probability measure on $\cT$.
\end{thmalpha}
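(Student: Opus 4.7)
My plan is to establish~(\ref{eq:convSmallForest}) by direct enumeration combined with R\'enyi's asymptotic~(\ref{eq:Renyi}), and~(\ref{eq:convAlphaForest}) by a second-moment argument based on generating-function singularity analysis.

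For~(\ref{eq:convSmallForest}), write $m=|{\bf f}|$ and take $n$ large enough that $n-m$ strictly exceeds every component-size of ${\bf f}$. A labeled forest $F$ on $[n]$ with $\sma(F)\equiv{\bf f}$ is then obtained uniquely by choosing an $m$-subset $S\subset[n]$, placing on $S$ one of the $m!/\Aut_u({\bf f})$ labelings isomorphic to ${\bf f}$, and placing on the complement one of the $(n-m)^{n-m-2}$ Cayley trees. Dividing by the total number $f_n$ of labeled $n$-vertex forests, which satisfies $f_n\sim e^{1/2}n^{n-2}$ by~(\ref{eq:Renyi}) and Cayley's formula, and using $(1-m/n)^{n-m-2}\to e^{-m}$, yields the claimed limit. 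That $p_\infty$ is a probability distribution amounts to $F(e^{-1})=e^{1/2}$, where $F(z)=e^{U(z)}=\sum_{\bf f}z^{|{\bf f}|}/\Aut_u({\bf f})$ is the exponential generating function of labeled forests and $U(z)=T(z)-T(z)^2/2$ that of unrooted trees; this follows from $T(e^{-1})=1$.

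For the first moment in~(\ref{eq:convAlphaForest}), by symmetry $\mathbb{E}[\alpha^{F_n}(T)]=n\cdot\Pr(\text{vertex }1\text{ is counted})$. I decompose $F$ as (tree containing vertex $1$)$\,\times\,$(rest of forest), and further split the tree of $1$ as (pendant copy of $T$ rooted at $1$)$\,+\,$(edge to $u^*$)$\,+\,$(rooted tree at $u^*$). This gives, with $t=|T|$, the identity
\[
\Pr(\text{vertex }1\text{ is counted})=\frac{1}{n\,\Aut_r(T)}\cdot\frac{[z^{n-t}]T(z)F(z)}{[z^n]F(z)}+o(1),
\]
where the $o(1)$ absorbs the ``largest subtree'' clause in the definition of $\alpha$: this clause can only fail when vertex $1$ lies in a tree of size at most $2t-1$, and such vertices contribute $O(1)$ in expectation by the same analysis as for~(\ref{eq:convSmallForest}). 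Singularity analysis at $z=e^{-1}$, using the square-root expansion $T(z)=1-\sqrt{2}\sqrt{1-ez}+O(1-ez)$ and the cancellation $U(z)=\tfrac12+O(1-ez)$ (so $F(z)=e^{1/2}+O(1-ez)$), produces $[z^n]F(z)\sim(e^{1/2}/\sqrt{2\pi})\,e^n n^{-5/2}$ and $[z^n]T(z)F(z)\sim(e^{1/2}/\sqrt{2\pi})\,e^n n^{-3/2}$, giving the limit $e^{-t}/\Aut_r(T)=a_\infty(T)$.

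For the second moment I split $\Pr(\text{vertices }1\text{ and }2\text{ both counted})$ according to whether $1$ and $2$ lie in the same tree of $F_n$ or in distinct trees. The distinct-trees case contributes only $O(1/n)$. For the same-tree case, a decomposition of the common tree along the unique path joining $1$ and $2$, with a rooted tree hanging off each internal path-vertex, yields the generating function
\[
\frac{z^{2t-2}}{\Aut_r(T)^2}\cdot\frac{F(z)}{1-T(z)};
\]
the factor $(1-T(z))^{-1}\sim(\sqrt{2}\sqrt{1-ez})^{-1}$ creates an inverse square-root singularity, contributing exactly one extra factor of $n$ in the asymptotics and yielding the limit $a_\infty(T)^2$. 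Hence $\mathrm{Var}(\alpha^{F_n}(T)/n)\to 0$, and Chebyshev's inequality gives convergence in probability. Finally, $a_\infty$ is a probability measure on $\cT$ because $T(z)=\sum_{T\in\cT}z^{|T|}/\Aut_r(T)$ and $T(e^{-1})=1$. The main technical obstacle is the singularity analysis --- tracking the three distinct singular exponents $5/2$, $3/2$, $1/2$ appearing respectively in $F$, $TF$, and $F/(1-T)$ --- together with the bookkeeping needed to show that the ``largest subtree'' clause in the definition of $\alpha$ is asymptotically negligible.
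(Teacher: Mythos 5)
Your argument for~\eqref{eq:convSmallForest} is more elementary than the paper's: rather than expanding the generating function of forests with prescribed small part at the singularity $z=e^{-1}$ and transferring the $(1-ze)^{3/2}$ term, you decompose a labeled forest with $\sma(F)\equiv\mathbf{f}$ (for $n$ large) uniquely into a copy of $\mathbf{f}$ on an $m$-subset and a Cayley tree on the complement, giving the exact count $\binom{n}{m}\frac{m!}{\Aut_u(\mathbf{f})}(n-m)^{n-m-2}$, then divide by $f_n\sim e^{1/2}n^{n-2}$. This avoids singularity analysis for this part, at the cost of importing R\'enyi's asymptotic as a black box (the paper derives it in passing). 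For~\eqref{eq:convAlphaForest} your route is essentially the paper's: first and second moments via generating functions and transfer theorems, then Chebyshev. Your first-moment GF matches the paper's $\frac{z^t}{\Aut_r(T)}T(z)F(z)$, and your treatment of the ``largest-component'' clause as an $O(1)$ error is sound. For the second moment, the same-tree/distinct-tree split and the path decomposition are the right idea; your $\frac{1}{1-T(z)}$ factor is the same as the paper's $z\frac{d}{dz}T(z)=\frac{T(z)}{1-T(z)}$ up to a bounded correction.

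There is, however, one concrete slip: the same-tree generating function should be $\frac{z^{2t}}{\Aut_r(T)^2}\cdot\frac{F(z)}{1-T(z)}$, not $\frac{z^{2t-2}}{\Aut_r(T)^2}\cdot\frac{F(z)}{1-T(z)}$. Each pendant $T$-copy has $t$ vertices including its root, and the path endpoints $1,2$ are already counted inside those copies, while the $(1-T(z))^{-1}$ factor accounts only for the \emph{internal} path vertices and the trees hanging off them; so the total exponent is $2t$, not $2t-2$. As written, the extra $z^{-2}$ produces a spurious factor $e^2$ at $z=e^{-1}$, which would give $\mathbb{E}\big[\alpha^{F_n}(T)(\alpha^{F_n}(T)-1)\big]\sim e^2 a_\infty(T)^2 n^2$ and hence $\mathrm{Var}\big(\alpha^{F_n}(T)\big)=\Theta(n^2)$ --- Chebyshev would then fail. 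Since the limit you actually state is the correct $a_\infty(T)^2$, this is evidently a transcription error rather than a conceptual one, but it must be corrected for the concentration step to go through.
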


\medskip
Our main result says that, for a bridge-addable class $\cG$, if we have an approximate version of~\eqref{eq:Renyi} for $G_n$, then we also have an approximate version of \eqref{eq:convSmallForest} and~\eqref{eq:convAlphaForest} for $G_n$. 
In the next statement and everywhere in the paper, the constants $\epsilon, \eta, \rho, \nu, \zeta$ are implicitly assumed (in addition to other written quantifications or assumptions) to be positive and smaller than $c$ where $c$ is a small, absolute, constant.
\begin{thm}[\bf Main result]\label{thm:main}
For every $\epsilon, \eta>0$,
there exist a $\zeta>0$ 
and an $n_0$
 such that for every $\zeta$-tight bridge-addable class $\mathcal{G}$ and every $n\geq n_0$, the following holds:
\begin{enumerate}
\item[i)]
The small components of $G_n$ are close  to those of a large random forest, in the sense that for every unrooted unlabeled forest $\mathbf{f}$
we have:
$$
\Big|\Pr\big(\sma(G_n)\equiv {\bf f}\big)
-
p_\infty({\bf f})
\Big|
<\epsilon.
$$
\item[ii)]
The statistics of pendant trees in $G_n$ are close to those of a large random forest, in the sense that:
$$
\Pr\left(\forall T\in \cT: 
\left|\frac{\alpha^{G_n}(T)}{n}-a_\infty(T)\right|<\eta
\right)
>1-\epsilon.
$$
\end{enumerate}
\end{thm}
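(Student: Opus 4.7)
The plan is to revisit the bridge-addition scheme underlying Theorem~\ref{thm:conj} and refine it to track not only the number of components of $G_n$ but also their isomorphism types and the pendant trees hanging off the giant component. The bound $\Pr(G_n\text{ connected})\geq(1-o(1))e^{-1/2}$ of~\cite{CP15} is obtained as a sum of non-negative ``excess'' contributions, one per inter-component bridge-addition; when this probability is squeezed against the bound by $\zeta$-tightness, each contribution, and in particular those indexed by a given small-component profile $\mathbf{f}$ or rooted pendant tree $T$, must individually be $o_\zeta(1)$.

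More concretely, I would fix an unrooted unlabeled forest $\mathbf{f}$ and a tree $U$ occurring in $\mathbf{f}$, and consider the injective map $(G,e)\mapsto(G+e,e)$ sending a pair $(G,e)$ with $G\in\cG_n$, $\sma(G)\equiv\mathbf{f}$, and $e$ an edge joining a vertex of a small component of $G$ isomorphic to $U$ to a vertex of the largest component of $G$, to a pair $(H,e')$ with $H\in\cG_n$, $\sma(H)\equiv\mathbf{f}\setminus\{U\}$, and $e'$ a bridge of $H$ whose removal detaches a pendant copy of $U$ from the largest component. Counting the two sides, and using that under $\zeta$-tightness the largest component of $G_n$ has $n-o_p(n)$ vertices (itself obtained by an argument in the spirit of~\cite{CP15}), yields an inequality of the form
$$N_n(\mathbf{f})\cdot c_U(\mathbf{f})\cdot|U|\cdot n\,(1+o(1))\leq\sum_{H\in\cG_n,\,\sma(H)\equiv\mathbf{f}\setminus\{U\}}\beta_U(H),$$
where $N_n(\mathbf{f})=|\{G\in\cG_n:\sma(G)\equiv\mathbf{f}\}|$, $c_U(\mathbf{f})$ is the multiplicity of $U$ in $\mathbf{f}$, and $\beta_U(H)$ counts the pendant copies of $U$ attached to the largest component of $H$, which can be written as a weighted sum of $\alpha^H(T)$ over rooted versions $T$ of $U$. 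The aggregate of these inequalities over all $(\mathbf{f},U)$ is exactly what drives the proof of Theorem~\ref{thm:conj}, so $\zeta$-tightness forces each to be within an $o_\zeta(1)$ factor of equality.

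I would first prove part~(ii) from the $\mathbf{f}=\{U\}$ case of the above, which pins down $\mathbb{E}[\alpha^{G_n}(T)/n]$ at $a_\infty(T)$ up to $o_\zeta(1)$ for every fixed $T\in\cT$. Concentration with probability at least $1-\epsilon$ then follows from a second-moment computation, itself an instance of the same scheme but with two simultaneously marked pendant trees, producing an identity with a negligible two-point correction. Part~(i) I would deduce by induction on $|\mathbf{f}|$: the base case $\mathbf{f}=\emptyset$ is the definition of $\zeta$-tightness, and the inductive step substitutes the pendant-tree estimate from~(ii) into the right-hand side of the displayed identity and matches it against the analogous identity $p_\infty(\mathbf{f})\,c_U(\mathbf{f})\,|U|=p_\infty(\mathbf{f}\setminus\{U\})\cdot\sum_{T}a_\infty(T)$ summed over rooted versions $T$ of $U$, which falls out of Theorem~\ref{thmalpha:randomForests}.

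The main obstacle is likely the second-moment calculation feeding~(ii): the two-point version of the bridge-addition identity carries correction terms whenever the two marked pendant copies overlap or interact in the giant component, and these must be shown to be negligible uniformly in $T$. A secondary difficulty is error propagation in the induction for~(i): one needs explicit control of the accumulated error as a function of $|\mathbf{f}|$, so that for each fixed $\mathbf{f}$ it tends to $0$ as $\zeta\to 0$.
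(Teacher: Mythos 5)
Your proposal rests on a mischaracterization of how the bound $\Pr(G_n\text{ connected})\geq(1-o(1))e^{-1/2}$ is established in~\cite{CP15}. The simple bridge-addition double-counting you describe—map $(G,e)\mapsto(G+e,e)$, compare the two sides—is exactly what yields inequality~\eqref{eq:easyBound}, namely $i\,|\cG_n^{(i+1)}|\leq|\cG_n^{(i)}|$, and that only gives the weaker lower bound $e^{-1}$. The improvement by a factor $\tfrac{1}{2}$, i.e.\ Lemma~\ref{lemma:CP15Prop4}, is \emph{not} a sum of non-negative ``excess'' contributions indexed by bridge-additions: it is obtained by partitioning $\cG_n$ into boxes indexed by pendant-tree statistics, relating the ratios $|\cB^U_{n,[\alpha]^w}|/|\cA_{n,[\alpha]^w_q}|$ to an infinite-dimensional optimization problem over the weights $\omega(T,\mathbf{z})$, and proving that the partition function $Y^u(\mathbf{z})$ is bounded by $1/2$ on the admissible domain $D$. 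Consequently, tightness of the final bound does not ``force each contribution to within an $o_\zeta(1)$ factor of equality'' of your refined bridge-addition inequalities; it forces near-optimality of $Y^u$ on the boxes carrying most of the mass, which is a genuinely different and much stronger statement. This is the content of Lemmas~\ref{lem:bad_cells}, \ref{lem:z_in_D}, \ref{lem:good_implies_Y_large}, \ref{lem:omega_controled} and Propositions~\ref{lem:1}--\ref{lem:2} in the paper: one has to establish a stability (near-uniqueness of the extremizer) result for the optimization problem, showing that near-extremizers satisfy $\omega(T,\mathbf{z})\approx e^{-|T|}$ for all small $T$, before any pendant-tree or small-component asymptotics can be extracted. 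Nothing in your proposal replaces this stability analysis, which is the heart of Section~3.

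Two secondary issues. First, your quantity $\beta_U(H)$ (the image of your injective map) counts pendant copies of $U$ attached by a \emph{removable} edge, whereas $\alpha^H(T)$ counts all pendant copies; for a general bridge-addable class these can differ drastically, and showing that for $\zeta$-tight classes almost all such edges are removable is itself nontrivial (Lemmas~\ref{lem:remove_cuts} and~\ref{lem:trees hanging}), and in the paper it is deduced \emph{from} the already-proved forest case, not used to establish it. Second, your second-moment step is unsupported by the scheme: the identity you write down controls $\sum_H\beta_U(H)$, an aggregate over the class, and there is no evident two-marked-pendant-tree analogue of the bridge-addition inequality that would control $\mathrm{Var}(\alpha^{G_n}(T))$ without already knowing the box-level structure. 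The paper obtains the in-probability statement ii) not by second moment but by showing that the overwhelming majority of the mass lies in good boxes, for each of which $\alpha(T)/n$ is deterministically pinned near $a_\infty(T)$.
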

\begin{rem}
It is easy to see (up to adapting the dependence of $\zeta$ in $\epsilon, \eta$)  that we can replace $i)$ by:
\begin{enumerate}[topsep=1pt]\it
\item[i')]
The total variation distance between the law of $\sma(G_n)$ and the probability law $p_\infty$ is at most $\epsilon$.
\end{enumerate}
Similarly we could replace $ii)$ by:
\begin{enumerate}[topsep=1pt]\it
\item[ii')]
The $L^1$-distance between the measure $\alpha^{G_n}(\cdot)/n$ and the probability law $a_\infty(\cdot)$ (both are measures on $\mathcal{T}$) is at most $\eta$ with probability at least $1-\epsilon$.
\end{enumerate}
\end{rem}

\begin{rem}
Our main result, Theorem~\ref{thm:main}, can be viewed both as a \emph{unicity result} (since it states that in the limit, and through the lens of local observables, the class of forests is the only one to reach the optimum value $e^{-1/2}$) and as a \emph{stability result} (since it also states that the only classes than come \emph{close} to the extremal value $e^{-1/2}$ are \emph{close} to forests, again through local observables of random graphs).
Here we use the terminology ``stability result'' on purpose, by analogy with the field of extremal graph theory. 
Indeed the study of graphs that come \emph{close} to achieving extremal properties is a classical topic in this field.
\emph{Stability results}, pioneered in the 
papers~\cite{erdos1966limit,erdos1967some,erdos1966some,simonovits1968method}, show that in many cases, the graphs that are close to being extremal have a structure close to the actual extremal graphs, in some quantifiable sense.
 Our main result suggests that the question of stability of extremal graph
classes, with respect to appropriate graph limit topologies (here, local convergence), should be
further examined.
\end{rem}

Before going into the proof of the theorem, let us look at some closely related statements and corollaries. 
Call a bridge-addable class $\cG$ \emph{tight} if it is $\zeta$-tight for any $\zeta>0$, that is to say, as $n$ goes to infinity:
$$
\Pr(G_n \mbox{ is connected}) \rightarrow e^{-1/2}.
$$
Then we obtain the following consequence of Theorem~\ref{thm:main} for tight bridge-addable classes, which is weaker (it is a unicity, but not a stability result).
\begin{thm}[\bf Convergence of local statistics in tight bridge-addable graph classes]
\label{thm:tight}
Let $\mathcal{G}$ be a tight bridge-addable class of graphs.
Then, when $n$ goes to infinity, $\sma(G_n)$ converges in distribution (for the discrete topology) to the probability measure $p_\infty(.)$ given by~\eqref{eq:convSmallForest}. Equivalently, for all unrooted forests {\bf f}, we have:
\begin{align}\label{eq:thmtightPart1}
\Pr(\sma(G_n)\equiv {\bf f}) \longrightarrow p_\infty({\bf f}).
\end{align}
Moreover, for any rooted tree $T\in \cT$, the proportion $\frac{\alpha^{G_n}(T)}{n}$ of local pendant copies of the tree $T$ converges in probability to the deterministic constant $a_\infty(T)$ given by~\eqref{eq:convAlphaForest}:
\begin{align}\label{eq:thmtightPart2}
\frac{\alpha^{G_n}(T)}{n} \stackrel{(p)}{\longrightarrow} a_\infty(T).
\end{align}
In particular, the random measure $\frac{1}{n}\alpha^{G_n}(\cdot)$ on $\mathcal{T}$ converges in distribution to the deterministic probability measure $a_\infty(\cdot)$ (for the topology on measures induced by convergence in distribution on discrete spaces).
\end{thm}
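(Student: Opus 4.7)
The plan is to deduce Theorem~\ref{thm:tight} directly from Theorem~\ref{thm:main}, together with the simple observation that a tight bridge-addable class is, by definition, $\zeta$-tight for every $\zeta>0$ (with an $n$-threshold possibly depending on $\zeta$). One may therefore feed Theorem~\ref{thm:main} with $\zeta$ as small as we wish, at the only cost of enlarging the threshold $n_0$, and let the error parameters inside that theorem tend to $0$.

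To establish \eqref{eq:thmtightPart1}, I fix an unlabeled unrooted forest $\mathbf{f}$ and an arbitrary $\epsilon>0$, and apply Theorem~\ref{thm:main}(i) with this $\epsilon$ (and any fixed $\eta$, say $\eta=1$, which is irrelevant here) to obtain a $\zeta_0>0$ and an $n_0$. Since $\cG$ is tight, it is $\zeta_0$-tight for all $n\geq n_1$ for some $n_1$, and for $n\geq \max(n_0,n_1)$ Theorem~\ref{thm:main}(i) yields
\[
\big|\Pr(\sma(G_n)\equiv \mathbf{f}) - p_\infty(\mathbf{f})\big| < \epsilon.
\]
As $\epsilon$ was arbitrary, \eqref{eq:thmtightPart1} follows. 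The argument for \eqref{eq:thmtightPart2} is identical: fix $T\in\cT$ and $\eta,\epsilon>0$, apply Theorem~\ref{thm:main}(ii) to get the corresponding $\zeta$, and conclude $\Pr\big(|\alpha^{G_n}(T)/n - a_\infty(T)|\geq \eta\big)<\epsilon$ for all $n$ large enough, which is precisely convergence of $\alpha^{G_n}(T)/n$ in probability to $a_\infty(T)$.

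For the last assertion -- convergence in distribution of the random probability measure $\frac{1}{n}\alpha^{G_n}(\cdot)$ to $a_\infty(\cdot)$ in the weak topology on probability measures on the countable set $\cT$ -- I invoke the stronger formulation given by Remark~1(ii'). Exactly as above, tightness of $\cG$ allows one to apply this formulation with arbitrarily small parameters: for every $\epsilon,\eta>0$ and every $n$ large enough, the $L^1$-distance (equivalently, the total variation distance) between $\frac{1}{n}\alpha^{G_n}(\cdot)$ and $a_\infty(\cdot)$ is at most $\eta$ with probability at least $1-\epsilon$. This is convergence in probability in total variation, which in particular implies weak convergence of the random probability measure towards the deterministic measure $a_\infty$, as stated.

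There is no real obstacle here, since all the work has already been done in Theorem~\ref{thm:main}. The only mild subtlety worth flagging is that on the infinite set $\cT$, pointwise (or even sup-norm) convergence of probability measures does not by itself imply weak convergence -- mass could in principle escape to infinity -- which is precisely why one appeals to the $L^1$ version in Remark~1(ii') rather than to the sup-norm form of Theorem~\ref{thm:main}(ii) alone.
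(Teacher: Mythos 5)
Your proof is correct and follows exactly the route the paper intends: Theorem~\ref{thm:tight} is announced in the text as a ``consequence of Theorem~\ref{thm:main} for tight bridge-addable classes,'' and the paper gives no separate argument, so filling in the deduction as you do (feed arbitrarily small $\zeta$ into Theorem~\ref{thm:main} since a tight class is $\zeta$-tight for every $\zeta$, then let $\epsilon,\eta\to 0$) is precisely what is meant. The one small phrasing slip --- ``$\zeta_0$-tight for all $n\geq n_1$'' --- conflates the class-level property of $\zeta_0$-tightness with the per-$n$ connectivity bound that defines it, but this is cosmetic and does not affect the argument; and your final remark that one should invoke the $L^1$ form (ii') rather than the sup-norm form (ii) to rule out mass escaping to infinity on the countable set $\cT$ is a genuine and correctly resolved subtlety.
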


Theorem \ref{thm:tight} states that, from the point of view of statistics of pendant trees and of non-largest components, tight classes are  indistinguishable from random forests in the limit. 
Let us develop in this direction. Let $V_n$ be a uniform random vertex in $G_n$. 
Then for a given $T\in \cT$, conditionally to $G_n$, the quantity $\alpha^{G_n}(T)/n$ is the probability that from $V_n$ hangs a copy of the tree $T$. Readers familiar with the Benjamini-Schramm (BS) convergence of graphs~\cite{BenjaminiSchramm}  will note the similarity with this notion.
If $(G,x)$ and $(H,y)$ are two rooted graphs (here and in the forthcoming discussion we allow infinite graphs but we always implicitly assume  them to be locally finite), define the BS-distance $d_{BS}((G,x);(H,y))$ as $2^{-K}$ where $K$ is the largest integer such that the balls of radius $K$ in $(G,x)$ and $(H,y)$ are isomorphic, as rooted unlabelled graphs.
This distance
 (also called the \emph{ball distance}, see~\cite{Lovasz:book}) 
defines a topology on the set of rooted graphs, and enables us to talk about convergence in distribution of random rooted graphs, in the BS-sense. An equivalent definition of this convergence is the following: a sequence of random rooted graphs $(H_n,x_n)$ converges to $(H_\infty, x_\infty)$ if and only if for any $r\geq 1$ and for any rooted graph $(H,x)$ of radius $r$, the probability that the ball of radius $r$ in $(H_n,x_n)$ is isomorphic to $(H,x)$ converges to the probability of the same event in $(H_\infty, x_\infty)$. 

It is classical (see e.g.~\cite[Lemma 2.4]{Aldous})
that if $F_n$ is a uniform random forest on $n$ vertices rooted at a random uniform vertex $V_n$, then
$$
(F_n,V_n) \rightarrow (F_\infty, V_\infty)
$$
in distribution in the BS-sense, where $(F_\infty,V_\infty)$ is the ``uniform infinite rooted random forest'' (which we could also have called ``uniform infinite rooted random tree'', since it is almost surely a tree). Namely, $(F_\infty,V_\infty)$ can be constructed as follows: consider a semi-infinite path, starting at a vertex $V_\infty$, and identify each vertex of this path with the root of an independent Galton-Watson tree with offspring distribution $Poisson(1)$.
In our context, passing from pendant trees to balls is an easy task, and one can deduce the following from Theorem~\ref{thm:tight}.
\begin{cor}[\bf Local convergence of tight bridge-addable graph classes]\label{cor:BS}
Let $\mathcal{G}$ be a tight bridge-addable graph class. Let $G_n$ be a uniform random graph from $\cG_n$ and let $V_n$ be a uniform random vertex of $G_n$. Then $(G_n,V_n)$ converges to $(F_\infty, V_\infty)$ in distribution in the Benjamini-Schramm sense.
\end{cor}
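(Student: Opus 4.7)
To prove that $(G_n, V_n) \to (F_\infty, V_\infty)$ in the Benjamini--Schramm sense, we must show that for every finite rooted graph $(H, x)$ of radius $r$, the probability $\Pr(B_r(G_n, V_n) \equiv (H, x))$ converges to $\Pr(B_r(F_\infty, V_\infty) \equiv (H, x))$, where $B_r(G, v)$ denotes the induced subgraph on vertices at distance at most $r$ from $v$. Since $F_\infty$ is almost surely a tree, the target probability vanishes whenever $(H, x)$ is not a tree, and the vanishing of the corresponding probability for $G_n$ will be a byproduct of the tree-case analysis.

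The starting point is the identification of $a_\infty$ with the law of a Galton--Watson tree with $\mathrm{Poisson}(1)$ offspring, which is a direct verification using the formula $a_\infty(T) = e^{-|T|}/\Aut_r(T)$ together with the plane-tree encoding of unlabeled rooted trees. Combined with the explicit description of $F_\infty$ as a semi-infinite spine $v_0=V_\infty, v_1, v_2, \ldots$ carrying independent GW trees $\tau_0, \tau_1, \ldots$, the ball $B_r(F_\infty, V_\infty)$ becomes the gluing of the truncations $\tau_i|_{\leq r-i}$ along the path of length $r$ starting at $V_\infty$. Its distribution is thus an explicit function of $a_\infty$.

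In $G_n$, Theorem~\ref{thm:tight}(i) guarantees that $V_n$ lies in the largest component with probability $1-o(1)$; Theorem~\ref{thm:tight}(ii) then controls the local tree structure. My plan is to construct, starting at $u_0=V_n$, a chain of cut-edges $u_0, u_1, \ldots, u_r$ by each time following the cut-edge whose removal places $u_i$ on the smaller side (corresponding to the spine direction in $F_\infty$), and to identify $B_r(G_n, V_n)$ with the gluing of the truncated pendant subtrees $T_i^*|_{\leq r-i}$ attached at each $u_i$ (where $T_i^*$ is $u_i$'s pendant subtree in the direction away from the chain). The pendant subtrees $T_i^*$ live in vertex-disjoint regions and, if jointly close in distribution to $r+1$ independent $a_\infty$-samples, yield the desired convergence; in particular this automatically forces $B_r(G_n, V_n)$ to be a tree with probability $1-o(1)$.

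The main obstacle is this joint convergence of $(T_0^*, \ldots, T_r^*)$: Theorem~\ref{thm:tight}(ii) gives only the marginal distribution of a pendant subtree at a uniformly random vertex, whereas the $u_i$'s are progressively biased by their position along the chain. I would handle this by truncation --- choosing a finite family $\cF_\delta\subset\cT$ capturing all but $\delta$ of the $a_\infty$-mass, so that each $T_i^*\in\cF_\delta$ with high probability --- and then by a second-moment argument on joint counts $\#\{(u_0, \ldots, u_r) : T_i^* = T^{(i)} \ \forall i\}$. Since different $u_i$'s correspond to vertex-disjoint pendant regions of bounded total size, these joint counts factorize up to $O(1)$ corrections, and the marginal control from Theorem~\ref{thm:tight}(ii) propagates to the joint distribution. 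Matching term by term against the $F_\infty$ description then closes the argument.
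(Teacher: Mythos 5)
Your proposal takes a genuinely different route from the paper, and it has a gap at exactly the place you flag as the main obstacle.

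The paper avoids the joint-distribution problem entirely by working with \emph{hulls} rather than balls: $H_{G,r}(v)$ is the ball of radius $r$ together with all small components of $G\setminus B_r(v)$. For a hull shape $T$ with a single exit vertex $w$ at distance $r$, re-rooting $T$ at $w$ gives a rooted tree $T'\in\cT$, and the key observation is that $\Pr\bigl(H_{G,r}(V)\equiv T\bigr) \geq m\,\alpha^{G}(T')/n$, where $m$ is the number of preimages of the root under the re-rooting. In other words, the whole hull around $V_n$ is encoded by a \emph{single} pendant-tree statistic, so Theorem~\ref{thm:tight}$(ii)$ applies directly, term by term, to give $\liminf_n \Pr(H_{G_n,r}(V_n)\equiv T)\geq q_\infty(T)$. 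Since $\sum_T q_\infty(T)=1$ and the events are disjoint, the $\liminf$ upgrades to a limit for free, and convergence of ball probabilities is then a straightforward summation over compatible hull shapes. No joint control over several pendant subtrees is ever needed.

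Your plan instead decomposes $B_r(G_n,V_n)$ along a chain $u_0,\dots,u_r$ with pendant subtrees $T_i^*$, and then requires the joint law of $(T_0^*,\dots,T_r^*)$ to approach the product $a_\infty^{\otimes (r+1)}$. This is where the gap lies. Theorem~\ref{thm:tight}$(ii)$ concentrates the \emph{empirical} pendant-tree distribution over a \emph{uniform} vertex; but $u_1,\dots,u_r$ are deterministic functions of $(G_n,V_n)$ and are size-biased by the chain construction, so the marginal law of each $T_i^*$ is not controlled by $\alpha^{G_n}(\cdot)/n$, let alone the joint law. The claim that the joint counts ``factorize up to $O(1)$ corrections'' because the pendant regions are vertex-disjoint does not follow: disjointness of the regions gives no independence, and the quantities you would need ($(r{+}1)$-point correlations of pendant-tree types) are not linear in $\alpha^{G_n}$ and are not addressed by the first- or second-moment information available from Theorem~\ref{thm:tight}. (There are also smaller issues: the chain may not exist if $V_n$ is not connected to the bulk through a cut-edge, a point the paper's hull formalism absorbs automatically, and which your appeal to Theorem~\ref{thm:tight}$(i)$ does not cover.) Making your route rigorous would require a separate argument for joint pendant-tree statistics along paths, essentially re-deriving a stronger local result than what the theorem supplies; the paper's re-rooting trick shows this extra work is unnecessary.
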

The purpose of stating Corollary~\ref{cor:BS} is to illustrate the link between our local observables and the BS topology, but we could have stated stronger intermediate results. For example Corollary~\ref{cor:BS}  only uses the second part of Theorem~\ref{thm:tight},
and says nothing about the small connected components. In fact, it follows from Theorem~\ref{thm:tight} that for tight bridge-addable classes,  the pair $((G_n,V_n), \sma(G_n))$ converges in distribution to $(F_\infty,V_\infty)\otimes p_\infty$ for the product of the BS and the discrete topologies.  

Also note 
that the last corollary
is of a weaker nature than Theorem~\ref{thm:tight} since it averages over the random graph $G_n$, while~\eqref{eq:thmtightPart2} shows that a single sample of the random graph $G_n$ is locally similar to $F_\infty$ with high probability. More precisely,
it is possible to formulate another, stronger corollary of Theorem~\ref{thm:tight} in terms of the BS convergence as follows. Let $\mu_{G_n}$ be the law, given $G_n$, of the random rooted graph $(G_n,V_n)$ where $V_n$ is a uniform vertex of $G_n$ (thus $\mu_{G_n}$ is a random probability measure on the set of locally finite rooted graphs). Then it follows from Theorem~\ref{thm:tight} 
that if $\cG$
is a tight
bridge-addable class, $\mu_{G_n}$ converges in distribution to the \emph{deterministic} probability measure $\mu_\infty$, defined as the law of $(F_\infty, V_\infty)$. 
The underlying topology for the convergence is the topology of weak convergence induced by the BS distance on the set of probability measures on rooted graphs. 
We will not give more details on these questions since they only concern weaker reformulations of Theorem~\ref{thm:tight}.

\medskip
\begin{rem}\label{rem:example}
Our main theorem asserts that $\zeta$-tight bridge-addable classes are ``locally similar'' to random forests in some precise sense. However, they can be very different from some other perspective. 
For example, consider the set $\tilde {\mathcal{F}_n}$ of graphs on $[1..n]$ defined as follows: $\tilde {\mathcal{F}_n}$ contains the graph in which all edges linking vertices in $[1..\lceil n^{2/3}\rceil]$ are present and all other vertices are isolated, and $\tilde {\mathcal{F}_n}$ is the smallest bridge-addable class containing this graph. In other words, $\tilde{\mathcal{F}_n}$ is the set of graphs inducing a clique on $[1..\lceil n^{2/3}\rceil]$, and such that contracting this clique gives a forest.
Then $\tilde{\mathcal{F}}=\cup_{n\geq 1}\tilde{\mathcal{F}_n}$ is a bridge-addable class, and it is easy to see that it is tight (see Appendix~\ref{subapp:example} for more details), so our main results apply. However one can argue that the random graph $\tilde F_n$ in $\tilde{\mathcal{F}}_n$ is \textit{very} different from a random forest in several senses: first, it has $\Theta(n^{4/3})$ edges whereas a forest has linearly many. Second, with probability $1-O(n^{-1/3})$ an edge taken uniformly at random from $\tilde F_n$ belongs to a clique of size $\lceil n^{2/3}\rceil$, which is very different from what happens in a forest. This last point does not contradict our results, but only recalls that it is important here to think of locality as a measure of what happens around ``typical vertices'' and not ``typical edges''.
\end{rem}

\begin{rem}\label{rem:example2}
While our results state that typical graphs in tight bridge-addable classes look like random forests in a ``local'' sense, they do not imply that this similarity is preserved in a ``global'' sense. 
For example one can  ask whether a random graph in a tight bridge-addable class, properly normalized, converges as a metric space to the Continuous Random Tree (CRT) with respect to the Gromov-Hausdorff (GH) topology. This is true for uniform random forests~\cite{Aldous2}, but a simple example shows that this is not the case in general. 
Following the lines of the example presented in Remark~\ref{rem:example}, consider the set $\hat {\mathcal{F}_n}$ of graphs on $[1..n]$ defined as follows: $\hat {\mathcal{F}_n}$ contains the graph  where the vertices $1,\dots,\lceil n^{2/3}\rceil$ induce a path and all the other ones are isolated, and $\hat {\mathcal{F}_n}$ is the smallest bridge-addable class containing this graph. 
Then $\hat {\mathcal{F}}=\cup_{n\geq 1}\hat{\mathcal{F}_n}$ is a tight  bridge-addable class. Nevertheless, the diameter of the random graph $\hat{F_n}$ is at least $n^{2/3}$ with probability $1$, while the diameter of the largest tree in a uniform random $n$-vertex forest is of order $\sqrt{n}$. Moreover, when properly renormalized by a scaling factor of $n^{-2/3}$, $\hat{F_n}$ converges for the GH topology to a real interval and not to the CRT.
However,  it may be true in general that \emph{typical} distances in tight bridge-addable classes are of order $\sqrt{n}$, and even that some convergence to the CRT holds if one allows that a (random) subset of vertices of size $o(n)$ is removed from the graph. We were not able to construct a counterexample, and thus leave these questions open.
\end{rem}

We conclude this list of results with a simpler statement that does not require the full strength of our main theorems (it is a relatively easy consequence of the results of~\cite{CP15}, and we will prove it in Section~\ref{sec:first}).
\begin{thm}\label{thm:components}
Let $\mathcal{G}$ be a tight bridge-addable class and $G_n$ a uniform random graph from $\mathcal{G}_n$. Then for any $k\geq 0$, we have 
$$
\Pr\left(G_n \mbox{ has $k+1$ connected components}\right)
\longrightarrow
e^{-1/2}\;\frac{2^{-k}}{k!}.
$$
In other words, the number of connected components of $G_n$ converges in distribution to $1+Poisson(1/2)$.
\end{thm}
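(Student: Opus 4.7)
Set $\kappa_n := \kappa(G_n)$ and $p_{k,n} := \Pr(\kappa_n = k+1)$ for $k \geq 0$. The target values are the $Poisson(1/2)$ masses $q_k := e^{-1/2}\,2^{-k}/k!$, which form a probability distribution on $\mathbb{N}$. The plan is to show that $\limsup_n p_{k,n} \leq q_k$ for every $k$, together with a uniform tail bound, and then deduce convergence by conservation of probability.

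For the first step, I will extract from the proof of Theorem~\ref{thm:conj} in~\cite{CP15} an inequality of the form
\[
p_{k,n} \leq (1 + o_n(1))\, q_k
\]
for each fixed $k$, uniform enough in $k$ to guarantee a summable bound $p_{k,n} \leq C \cdot 2^{-k}/k!$ with $C$ independent of $n$. Such bounds fall out of the double-counting/bridge-addition machinery of~\cite{CP15} once the estimates are tracked as a function of the number of components, rather than specialised to the case $k=0$ which extracts only the MSW-type lower bound on $\Pr(G_n \text{ connected})$. The tightness hypothesis is what pins down the correct normalisation $e^{-1/2}$, by providing the complementary inequality $\limsup_n p_{0,n} \leq e^{-1/2}$ from which iteration of the bridge-addition inequality then propagates to all $k$.

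For the second step, the uniform tail bound makes the family $\{p_{\cdot,n}\}_n$ tight as a family of probability distributions on $\mathbb{N}$. Suppose $p_{k_0,n}\not\to q_{k_0}$ for some $k_0$. Passing to a subsequence we may assume $p_{k_0,n}\to \ell$ with $|\ell - q_{k_0}|\geq \eta$ for some fixed $\eta>0$; by the upper bound $\ell\leq q_{k_0}$, so actually $\ell\leq q_{k_0}-\eta$. A diagonal extraction, combined with tightness, then yields a further subsequence along which $p_{k,n}\to \ell_k$ for every $k$, with $\ell_k\leq q_k$ everywhere, $\ell_{k_0}\leq q_{k_0}-\eta$, and $\sum_k \ell_k = 1$. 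But these properties give $1 = \sum_k \ell_k \leq \sum_k q_k - \eta = 1-\eta$, a contradiction. Hence $p_{k,n}\to q_k$ for every $k$, as required. The only substantive point is the uniform-in-$k$ extraction from~\cite{CP15}; the rest is routine probabilistic book-keeping.
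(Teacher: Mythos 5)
Your proposal is correct, and it is a genuine variant of the paper's argument rather than a reproduction of it. The paper proceeds by first establishing a strictly quantitative two-sided ratio estimate, Lemma~\ref{lem:tight}: for every $\eta$, $m$ there is $\zeta$ such that for $\zeta$-tight classes and $i\leq m$,
$$
\left(\tfrac12-\eta\right)|\cG^{(i)}_n|\leq i|\cG^{(i+1)}_n|\leq\left(\tfrac12+\eta\right)|\cG^{(i)}_n|,
$$
the right-hand inequality being a direct import from \cite[Prop.~5]{CP15} and the left-hand one proved by contradiction: if it fails at some $i_*$, summing $\sum_i |\cG^{(i)}_{n_*}|/|\cG^{(1)}_{n_*}|$ (and splitting at $i_*$ and at a large cutoff $i_0$) produces a value of $\Pr(G_{n_*}\text{ connected})$ strictly above $(1+\zeta)e^{-1/2}$, contradicting $\zeta$-tightness. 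Theorem~\ref{thm:components} then follows by chaining the ratios against $|\cG^{(1)}_n|/|\cG_n|\to e^{-1/2}$. You instead extract only the one-sided bound $\limsup_n p_{k,n}\leq q_k$ for each fixed $k$ (which indeed falls out of iterating \cite[Prop.~5]{CP15} against the tightness upper bound on $p_{0,n}$), and then close the argument by a conservation-of-probability/subsequence extraction: since $\sum_k q_k=1$ and $\sum_k p_{k,n}=1$, a strict deficit at any $k_0$ is incompatible with a uniform tail bound. The two closure arguments are logically cognate — both exploit that the candidate limit is a probability distribution and that the $\limsup$'s are dominated — but the paper's is explicit and quantitative, and the resulting Lemma~\ref{lem:tight} is actually what is reused downstream (e.g.\ in Lemma~\ref{lem:bad_cells} and in Theorem~\ref{thm:main_forests}), whereas your route proves only the qualitative limit. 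For the standalone statement of Theorem~\ref{thm:components} both are fine. One small imprecision: the uniform tail bound you want is not $p_{k,n}\leq C\cdot 2^{-k}/k!$ with $C$ independent of $n$ \emph{and} $k$ — \cite[Prop.~5]{CP15} gives the factor $1/2+\eta$ only for $k$ up to a fixed $m$ — but the elementary bound \eqref{eq:easyBound} already yields $p_{k,n}\leq 1/k!$ for all $k,n$, which is summable and suffices for your tightness/diagonal step.
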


\noindent{\bf Structure of the paper.} 
The proof of Theorem~\ref{thm:main} occupies most of the paper, and we now briefly present its structure. 
The proof roughly follows the one of Theorem~\ref{thm:conj}, which we proved in~\cite{CP15}: very loosely speaking we show that for a class to be $\zeta$-tight, some form of tightness has to occur in each intermediate inequality proven in~\cite{CP15}. As the length of the present paper shows, there is however quite an important amount of work to be done to achieve this goal. In particular, apart from some key definitions and notation, and two statements that we directly import from~\cite{CP15} (Theorem~\ref{thm:conj} and Lemma~\ref{lemma:CP15Prop4}), the two papers are disjoint.

We start in Section~\ref{sec:first} by proving elementary results about the number of components (including Theorem~\ref{thm:components}) and we introduce some notions that will play a crucial role in the rest of the proof. Importantly, in Section~\ref{subsec:partition}, we introduce the partitioning of the space that underlies our technique of local double-counting from~\cite{CP15}. In particular we define the notion of ``box'' that we use in order to partition each graph class according to the local structure of the graphs it contains.

Sections~\ref{sec:trees} and~\ref{sec:transfer} occupy the most important part of the paper.
 In Section~\ref{sec:trees}, we prove an analogue of Theorem~\ref{thm:main} under the assumption that all elements of $\cG$ are forests. This is done in several steps: 
In~\ref{subsec:GoodAndBadBoxes} we define the notion of ``good boxes'' and we prove that most of the mass in tight bridge-addable graph classes is localized inside good boxes. These good boxes have the property that they realize locally the extremal value of the optimization problem introduced in~\cite{CP15}. This optimization problem expresses some ratios inherited from a double-counting strategy in terms of parameters that record the local structure of the graphs. In~\ref{subsec:stability} we study the stability of this problem and deduce that for good boxes, all parameters have to be close to the unique extremum value (closely related to the quantities $a_\infty$ and $p_\infty$ appearing in Theorem~\ref{thm:main}). In~\ref{ssc:k1} we use these facts to prove a version of our main result when the graph $G_n$ has one or two components. Finally in~\ref{ssc:k} we use an induction on the number of components to conclude the proof, in the case of forests.

In Section~\ref{sec:transfer}, we address the case of general bridge-addable graph classes: 
In~\ref{subsec:removable} we prove that $\zeta$-tight bridge-addable classes tend to have many removable edges (edges that when deleted from a graph in the class, give rise to a graph in the class), and in~\ref{subsec:conclude} we use this property and the results of Section~\ref{sec:trees} to conclude the proof of Theorem~\ref{thm:main}.
Finally, in~\ref{subsec:BS}, we give a detailed proof of Corollary~\ref{cor:BS}.

Appendix~\ref{subapp:components} recalls classical results on enumeration of forests and on random forests, while Appendix~\ref{subapp:example} gives more details about the example of Remark~\ref{rem:example}.

For the reader's convenience we include here a table of contents of the paper:
\newpage
\renewcommand{\contentsname}{\vspace{-6mm} }
\tableofcontents

\section{First results and set-up for the proof}
\label{sec:first}

In this section, we obtain our first results and we introduce important notions and notation used in the whole paper.
In~\ref{subsec:components} we study the number of connected components and we prove Theorem~\ref{thm:components}. In~\ref{subsec:partition}, we define the partitioning of the space that underlies our technique of local double-counting. Finally in \ref{subsec:quantifiers}, we give a few precisions for the use of quantifiers in the rest of the paper.

\subsection{Number of components in bridge-addable graph classes}
\label{subsec:components}

Through the rest of the paper, for a bridge-addable class of graphs $\mathcal{G}$ and for $i\geq 1$, we denote by $\mathcal{G}^{(i)}_n$ the set of $n$-vertex graphs in $\mathcal{G}$ having $i$ connected components.
An elegant double-counting argument going back to~\cite{MCSW} asserts that for all $i\geq 1$, and  $n\geq 1$ we have:
\begin{eqnarray}\label{eq:easyBound}
i \cdot \left|\mathcal{G}_n^{(i+1)}\right|
\leq
\left|\mathcal{G}_n^{(i)}\right|.
\end{eqnarray}
This statement follows by double-counting the edges of an auxiliary bipartite graph on the vertex set $\mathcal{G}_n^{(i)}\uplus\mathcal{G}_n^{(i+1)}$, where two graphs $G,H$ are linked by an edge if and only if one can be obtained from the other by adding a bridge: on the one hand, an element of $\mathcal{G}_n^{(i+1)}$ has degree at least $i(n-i)$ in this auxiliary graph, since $\cG$ is bridge-addable; on the other hand, an element of $\mathcal{G}_n^{(i)}$ has degree at most $(n-i)$ (which is the maximum number of cut-edges in a graph with $i$ connected components and $n$ vertices). Thus \eqref{eq:easyBound} follows. 
The main achievement of the paper~\cite{CP15} was to improve this bound by roughly a factor $\frac{1}{2}$, asymptotically.
\begin{lemmaalpha}[{\cite[Proposition 5]{CP15}}]\label{lemma:CP15Prop4}
 For every $\eta$ and every $m$, if $\cG$ is a bridge-addable class and $n$ is large enough,  one has for every $i\leq m$,
\begin{align}\label{eq:comp1}
i |\cG^{(i+1)}_{n}| \leq \left(\frac{1}{2}+\eta \right) |\cG^{(i)}_{n}|\;.
\end{align}
\end{lemmaalpha}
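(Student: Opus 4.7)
The plan is to sharpen the naive double-counting that yields \eqref{eq:easyBound} by a factor of roughly two. Recall that the naive argument matches the two expressions
\[
\sum_{G\in\cG^{(i+1)}_n} |\text{cross-edges}(G)| \;=\; \sum_{H\in\cG^{(i)}_n} |\{e\in E(H) : e \text{ is a bridge and } H-e\in\cG\}|,
\]
and bounds them below by $i(n-i)|\cG^{(i+1)}_n|$ (since $\sum_{j<k}|C_j||C_k|\ge i(n-i)$ for any partition of $n$ into $i+1$ parts) and above by $(n-i)|\cG^{(i)}_n|$. Both estimates are simultaneously tight for forests, so the missing factor of two must come from a finer analysis of which cross-edges, respectively which bridges, actually contribute. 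The heuristic is provided by \eqref{eq:Renyi}: in a random forest, each bridge has a canonical ``small side'' (a pendant subtree), and the fact that cross-edges in $G$ come in pairs of endpoints, only one of which sits on the small side, is the kind of asymmetry that should produce the factor~$2$.

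My approach would be to decorate the counting by a pendant local structure. Concretely, I would consider quadruples $(G, H, e, R)$ where $H = G + e$ and $R$ is the component of $H - e$ not containing the largest component of $H$, recorded as an unlabeled rooted tree at the endpoint of $e$ (or marked ``non-tree'' otherwise). Grouping the quadruples by the isomorphism class of $R$ partitions the count into ``boxes''. Within each box, one obtains a pair of inequalities bounding the box's contribution to the ratio $i|\cG^{(i+1)}_n|/|\cG^{(i)}_n|$ in terms of local parameters encoding the prevalence of $R$ as a pendant subtree in typical graphs of the class.

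The crux is then a local optimization problem in the spirit of~\cite{CP15}: maximize the resulting ratio over all admissible distributions of local structures. One expects the maximum to be attained at the distribution $a_\infty$ of Theorem~\ref{thmalpha:randomForests} and to evaluate to exactly $1/2$, which gives the announced $(1/2+\eta)$ bound for $n$ sufficiently large.

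The main obstacle, and the main technical content, is this optimization problem: proving that its supremum equals $1/2$ and is essentially attained only at forest-like local structures requires a genuine analytic argument rather than a pigeonhole inequality --- which is precisely why~\cite{CP15} devotes an entire separate paper to this step. A secondary difficulty is to handle the boxes that do not admit a non-trivial pendant tree --- for instance, when a component of $G$ is not a tree, or when removing a bridge of $H$ does not leave $\cG$; these must be shown to contribute negligibly, using Theorem~\ref{thm:conj} applied to suitable subclasses.
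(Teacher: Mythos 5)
This lemma is imported verbatim from~\cite{CP15} (it is Proposition~5 there) and is not proved in the present paper --- see the ``Structure of the paper'' paragraph at the end of Section~1, which names it explicitly as one of the two results taken as input from~\cite{CP15}. There is therefore no in-paper proof to compare against. Your sketch is nonetheless a reasonable high-level account of the strategy of~\cite{CP15}: a local refinement of the double-counting behind~\eqref{eq:easyBound}, organized via boxes on local pendant-tree statistics, feeding into an optimization problem (over the variables $\mathbf{z}$, constrained to the domain $D$ where $Y(\mathbf{z})<\infty$) whose extremal value is exactly $\tfrac{1}{2}$. Two details differ from the actual implementation. First, the boxes in~\cite{CP15} (and as recalled in Section~\ref{subsec:partition} here) live in the space $\cE_{k_*}$ of pendant-tree count vectors $\alpha^G(\cdot)$, not in the set of isomorphism classes of the detached component $R$; the shape $U$ of that detached component is a separate index, giving the subfamilies $\cB^U_{n,[\alpha]^w}$ whose ratios against $|\cA_{n,[\alpha]^w_q}|$ are the quantities to be optimized. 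Second, your plan omits the crucial preliminary reduction, via the Balister--Bollob\'as--Gerke replacement of $2$-blocks by fixed spanning trees~\cite{BBG}, to the case where all graphs in the class are forests; without this reduction the pendant-tree data does not control the whole graph and the box partition loses its combinatorial meaning. You are right that the optimization step carries essentially all the analytic content, constitutes the bulk of~\cite{CP15}, and cannot be shortcut by a simple pigeonhole or convexity argument.
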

The following lemma, which follows relatively easily from Lemma~\ref{lemma:CP15Prop4}, provides a converse inequality to~\eqref{eq:comp1} for $\zeta$-tight classes. Note that it implies Theorem~\ref{thm:components}.
\begin{lemma}\label{lem:tight}
For every $\eta$ and every $m$ there exists a $\zeta$ such that for every $\zeta$-tight bridge-addable class $\cG$ and provided $n$ is large enough, one has for every $i\leq m$,
$$
 \left(\frac{1}{2}-\eta \right) |\cG^{(i)}_n|\leq i|\cG^{(i+1)}_n| \leq   \left(\frac{1}{2}+\eta \right) |\cG^{(i)}_n|\;.
$$
\end{lemma}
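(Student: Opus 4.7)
The upper half of the claimed inequality is literally the content of Lemma~\ref{lemma:CP15Prop4}, so only the lower half requires work. The plan is to establish, by induction on $k\in\{1,\dots,m+1\}$, that the probability $p_k:=|\cG_n^{(k)}|/|\cG_n|$ is close to the corresponding forest value $e^{-1/2}(1/2)^{k-1}/(k-1)!$ in the sense that $|p_k - e^{-1/2}(1/2)^{k-1}/(k-1)!|$ can be made as small as desired by taking $\zeta$ small and $n$ large. Taking ratios of consecutive estimates will then immediately yield the desired two-sided bound on $i|\cG_n^{(i+1)}|/|\cG_n^{(i)}|$ for $i\leq m$, and Theorem~\ref{thm:components} will drop out as a by-product.

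For the base $k=1$: the $\zeta$-tightness assumption gives $p_1\leq(1+\zeta)e^{-1/2}$, while Theorem~\ref{thm:conj} supplies the matching lower bound $p_1\geq(1-\epsilon)e^{-1/2}$ for any preselected $\epsilon>0$ once $n$ is large. For the inductive step, Lemma~\ref{lemma:CP15Prop4} applied at $i=k$ with a small parameter $\eta'>0$ yields the desired upper bound $p_{k+1}\leq \tfrac{1/2+\eta'}{k}\,p_k$. The matching lower bound uses the identity $\sum_{j\geq 1}p_j=1$, rewritten as
\[
p_{k+1} \;=\; 1 \;-\; \sum_{j=1}^{k}p_j \;-\; \sum_{j\geq k+2}p_j.
\]
The first sum on the right is controlled by the inductive hypothesis. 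For the tail, I would bound $p_j$ from above by iterating Lemma~\ref{lemma:CP15Prop4} in the range $k+2\leq j\leq m+1$ (which yields $p_j\leq p_1(1/2+\eta')^{j-1}/(j-1)!$) and invoke the easy bound~\eqref{eq:easyBound} in the range $j>m+1$ (which yields $p_j\leq p_1/(j-1)!$, with negligible cumulative contribution thanks to the factorial decay). Using $p_1=e^{-1/2}(1+o(1))$ and summing the two contributions, the tail is at most $1-e^{-1/2}\sum_{j=1}^{k+1}(1/2)^{j-1}/(j-1)!+o(1)$. Substituting into the displayed identity and simplifying gives $p_{k+1}\geq e^{-1/2}(1/2)^{k}/k!-o(1)$, closing the induction.

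The only delicate point is the choice of parameters: given the target $\eta$ and $m$, I would fix $\eta',\epsilon>0$ small (as functions of $\eta$ and $m$) so that the upper bounds from Lemma~\ref{lemma:CP15Prop4} and the lower bounds from Theorem~\ref{thm:conj} introduce only a small distortion at each inductive step, then take $\zeta$ small enough (and $n$ large enough) that the total error accumulated over the at most $m+1$ inductive steps stays below, say, $\eta/2$. There is no structural obstacle to the argument: morally, it says that any deficiency of the form $p_{i+1}<(1/(2i)-c)\,p_i$ would make $\sum_j p_j$ strictly smaller than $e^{1/2}p_1$, which in turn would force $p_1>(1+\zeta)e^{-1/2}$ and contradict the tightness hypothesis. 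The main obstacle is thus purely bookkeeping---keeping track of how the errors $\zeta,\eta',\epsilon$ and the residual tail from truncating at $m+1$ compound through the finitely many induction steps.
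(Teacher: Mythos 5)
Your overall strategy is sound and closely parallels the paper's: both arguments exploit the identity $\sum_j|\cG_n^{(j)}|=|\cG_n|$ together with the upper bound of Lemma~\ref{lemma:CP15Prop4}, the crude bound~\eqref{eq:easyBound}, and the $\zeta$-tightness hypothesis. The paper argues by contradiction and directly bounds $|\cG_n|/|\cG_n^{(1)}|$, whereas you build additive estimates for each $p_k$ by induction and then take ratios; this is a legitimate restructuring (and in fact yields Theorem~\ref{thm:components} en route), but it is not materially more elementary.

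There is, however, a genuine gap in the tail estimate. You stop iterating Lemma~\ref{lemma:CP15Prop4} at $j=m+1$ and claim that the contribution of $j>m+1$, controlled only by~\eqref{eq:easyBound}, is ``negligible thanks to the factorial decay.'' This is false for fixed $m$: the crude bound gives $\sum_{j>m+1}p_j\leq p_1\sum_{l>m}1/l!$, while the quantity you need to compare it against is $e^{-1/2}\sum_{l>m}(1/2)^l/l!$, and the difference $e^{-1/2}\sum_{l>m}\bigl(1-(1/2)^l\bigr)/l!$ is a fixed positive constant (for $m=1$ it is on the order of $0.2$). Factorial decay makes each series converge, but it does not make the \emph{gap} between $1/l!$ and $(1/2)^l/l!$ small in sum --- indeed the two series sum to $e$ and $e^{1/2}$ respectively. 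The fix is easy and is exactly what the paper does: introduce an auxiliary cutoff $i_0\geq m$ depending only on $\eta$ and $m$ (not on $\cG$ or $n$), iterate Lemma~\ref{lemma:CP15Prop4} up to $i_0$ (taking $n$ large enough relative to $i_0$), and only resort to~\eqref{eq:easyBound} for $j>i_0$. Then the overshoot past $i_0$ can be made as small as desired by choosing $i_0$ large, after which your bookkeeping of the compounded errors over the at most $m$ induction steps goes through.
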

\begin{proof}
The second inequality is precisely Lemma~\ref{lemma:CP15Prop4}.

The path to prove the first inequality is rather straightfoward, but we will proceed carefully since the order in which the different parameters are chosen requires some caution. We proceed by contradiction. 
Fix $\eta$ and $m$ and assume that for every $\zeta>0$ there exist a $\zeta$-tight bridge-addable class $\cG$, a large enough $n_*$ and an $i_*\leq m$ such that 
\begin{align}\label{eq:compAbsurd}
i_*|\cG^{(i_*+1)}_{n_*}| \leq   \left(\frac{1}{2}-\eta \right) |\cG^{(i_*)}_{n_*}|\;.
\end{align}
Let $i_0\geq m$ be an integer that we will choose later.
By Lemma~\ref{lemma:CP15Prop4}, if $n$ is large enough,~\eqref{eq:comp1} holds with $\eta=\zeta$ for any $i\leq i_0$.
Also, since $\cG$ is $\zeta$-tight, provided that $n_*$ is large enough, we have
\begin{align}\label{eq:compTight}
\frac{|\cG^{(1)}_{n_*}|}{|\cG_{n_*}|}\leq (1+\zeta)e^{-1/2}\;.
\end{align}
Noting $f_i(x):=\sum_{j>i} \frac{x^j}{j!}$, 
we can now bound the inverse of the probability that $G_{n_*}$ is connected as follows 
\begin{align*}
\frac{|\cG_{n_*}|}{|\cG_{n_*}^{(1)}|}& \leq \sum_{i=1}^{i^*-1}\frac{|\cG^{(i)}_{n_*}|}{|\cG_{n_*}^{(1)}|}+\sum_{i=i_*}^{i_0}\frac{|\cG^{(i)}_{n_*}|}{|\cG_{n_*}^{(1)}|}+\sum_{i\geq i_0+1}\frac{|\cG^{(i)}_{n_*}|}{|\cG_{n_*}^{(1)}|}\\
&\leq \sum_{i=1}^{i^*-1}\frac{1}{i!}\left(\frac{1}{2}+\zeta\right)^i +\sum_{i=i_*}^{i_0} \frac{1}{i!}  \left(\frac{1}{2}+\zeta \right)^i \frac{\frac{1}{2}-\eta}{\frac{1}{2}+\zeta}+ f_{i_0}(1)
\end{align*}
where for the last term we used the bound~\eqref{eq:easyBound}.
Thus:
\begin{align*}
\frac{|\cG_{n_*}|}{|\cG_{n_*}^{(1)}|}&\leq e^{\frac{1}{2}+\zeta}-f_{i_0}(1/2+\zeta)+ \left(\frac{\frac{1}{2}-\eta}{\frac{1}{2}+\zeta}-1\right)(f_{i_*-1}(1/2+\zeta)-f_{i_0}(1/2+\zeta)) + f_{i_0}(1)\\
&\leq e^{\frac{1}{2}+\zeta}- \frac{\eta+\zeta}{1/2+\zeta}\cdot f_{i_*-1}(1/2) + f_{i_0}(1)\\
&\leq e^{1/2}+(e^{\zeta}-1)e^{1/2}- \eta f_{m}(1/2) + f_{i_0}(1)\\
\end{align*}
We now choose $\zeta$ small enough with respect to $\eta$ and $m$ such that $\frac{\eta}{2} f_{m}(1/2)\geq (e^{\zeta}-1 + 2\zeta)e^{1/2}$, 
and we choose $i_0$ large enough with respect to $m$, in such a way that $\frac{\eta}{2} f_{m}(1/2)\geq f_{i_0}(1)$. These choices fix the value $n_*$ as above, and we finally get the bound:
\begin{align*}
\frac{|\cG^{(1)}_{n_*}|}{|\cG_{n_*}|}& \geq (1-2\zeta)^{-1} e^{-1/2} \geq (1+2\zeta) e^{-1/2}\;,
\end{align*}
However, since $n_*$ is arbitrarily large, we obtain a contradiction with~\eqref{eq:compTight}.
\end{proof}

\subsection{Partitioning the graph class into highly structured subclasses}
\label{subsec:partition}

We now introduce a partitioning of $\cG_n$ in terms of some local statistics, which requires the following set-up, that is modeled on~\cite[proof of Prop~3]{CP15}. Here $\epsilon$ and $k_*$ are two constants, whose value may vary along the course of the paper, that will \textit{in fine} be chosen very small and very large, respectively:
\begin{itemize}
\item[-]  $\cU_\epsilon$ is the set of unrooted trees of order at most $\lceil \epsilon^{-1}\rceil$:
$$
\cU_\epsilon:=\{U\in \mathcal{U},\, |U|\leq \lceil \epsilon^{-1}\rceil\}.
$$
\item[-]  $\cT_*$ is the set of rooted trees of order at most $k_*$:
$$
\cT_*:=\{T\in \mathcal{T},\, |T|\leq k_*\}.
$$
\end{itemize}
More generally, for any given $\ell\geq 1$, we will use the notation $\cT_{\leq \ell}$ (resp., $\cU_{\leq \ell}$) to denote the set of rooted (resp., unrooted) trees of order at most $\ell$. Then, $\cU_\epsilon=\cU_{\leq\lceil\epsilon^{-1}\rceil}$ and $\cT_*=\cT_{\leq k_*}$.

\smallskip
Roughly speaking, we will use elements of $\cU_{\epsilon}$ and $\cT_*$ as ``test graphs'' to measure the shape of small components of $G_n$ and the number of pending subtrees of $G_n$ of given shapes, respectively.
For $\ell\geq 1$, we write $\cE_\ell=\{0,\dots,n-1\}^{\cT_{\leq \ell}}$, and we will be particularly concerned with the set $\cE_{k_*}$, namely the set of integer vectors with one coordinate for each ``test tree'' in $\cT_*$.
For $\alpha\in \cE_{k_*}$ and $w\geq 1$ (width), we define the \emph{box} $[\alpha]^w\!\subset \cE_{k_*}$ and its \textit{$q$-neighborhood} $[\alpha]^w_q$
as the parallelepipeds:
\begin{align*}
[\alpha]^w&:=\{\alpha'\in \cE_{k_*}: \ \forall T \in \cT_*, \ \alpha(T) \leq \alpha'(T) < \alpha(T)+w\}\;,\\
[\alpha]^w_q&:=\{\alpha'\in \cE_{k_*}: \ \forall T \in \cT_*, \ \alpha(T)-q \leq \alpha'(T) < \alpha(T)+w+q\}\;.
\end{align*}
Note that here, and elsewhere in the paper, we slightly abuse notation by using both the letter $\alpha$ to denote an element of $\mathcal{E}_{\ell}$ and the notation $\alpha^G$ to denote the function $\alpha^G:\mathcal{T}\rightarrow \mathcal{E}_{\ell}$ that counts the number of pendant trees of a given shape in the graph $G$.

Finally, if $\mathcal{S}_n$ denotes a set of graphs (where the letter $\mathcal{S}$ could carry other decorations), we let $\mathcal{S}_{n,[\alpha]^w}$ be the set of graphs $G$ in $\mathcal{S}_n$ such that $(\alpha^G(T))_{T\in\cT_*} \in [\alpha]^w$, and we use the same notation with $[\alpha]^w_q$.

Also, for every forest $\{U_1,\dots,U_k\}$, we denote by ${\mathcal{S}_n}^{\{U_1,\dots,U_k\}}$ the set of graphs $G$ in $\mathcal{S}_n$ such that $\sma(G)$ is isomorphic to $\{U_1,\dots U_k\}$. While we denote a forest by $\{U_1,\dots, U_k\}$, one should understand it as an \emph{unordered multiset} of unrooted trees.
In the case of graphs with two connected components, we just use the notation $\mathcal{S}_n^U$ for $\mathcal{S}_n^{\{U\}}$, where $U\in \cU$.

\subsection{Notation and quantifiers in the proof}
\label{subsec:quantifiers}

The proof of Theorem~\ref{thm:main} involves many different quantifiers. 
In the statements and everywhere in the paper, all Greek letters, apart from $\alpha$ and $\beta$,
are implicitly assumed (in addition to other written quantifications or assumptions) to be positive constants that are smaller than $c$ where $c$ is a small, absolute, constant. The letters $\alpha$ and $\beta$ are used to refer to elements of the space $\cE_{k*}$ or $\cE_\ell$. We also use latin letters to denote integers that are greater than or equal to one.

Each statement in Sections~\ref{sec:trees} and~\ref{sec:transfer} involves several variables and the relative dependency between them plays a subtle role in the proof. We have carefully made all quantifiers explicit in all the statements.
However, the reader can use the following inequalities to clarify the hierarchy of (small) parameters used in Sections~\ref{sec:trees} and~\ref{sec:transfer}:
\begin{align}\label{eq:inequalities}
\frac{1}{n}\ll
\zeta\ll
\frac{1}{w}\ll
\frac{1}{k_*}\ll
\xi\ll
\epsilon=\frac{1}{q}\ll
\gamma \ll
\rho\ll
\nu \ll
\vartheta\ll
\eta\ll
\theta_1\ll
\dots\ll
\theta_k,\delta,1/\ell,1/k,1/u \leq 1,
\end{align}
where the notation $a\ll b\leq 1$ has to be read as: 
In each statement involving both variables $a$ and $b$, there exists a non-decreasing function $f:(0,1]\to (0,1]$ such that the statement holds for every $0<a\leq b\leq 1$ such that $a\leq f(b)$.
For example, the order in which the quantifiers appear in the statement of Lemma~\ref{lem:tight} above correspond to the notation:
$$
n^{-1}\ll\zeta\ll\eta, m^{-1}.
$$
Note that $1/n$ is the leftmost quantity appearing in~\eqref{eq:inequalities}: throughout the paper, $n$ will be taken arbitrarily large with respect to all the other constants. We will often write ``for $n$ large enough'' rather than ``there exists $n_0$ such that for $n\geq n_0$...'', and it will always be the case that the implicit value of the parameter $n_0$ is chosen after all other parameters, and may depend on all of them.

During the proof, we will use the notation $a=b\pm\mu$ to denote that $b-\mu\leq a\leq b+\mu$.

\subsection{Evaluation of generating functions of trees and forests}
\label{subsec:evalGF}

In this subsection we recall two classical evaluations of generating functions of trees and forests that we will use several times in our proofs. We let $T(z)=\sum_{n\geq 1} \frac{t_n}{n!}z^n$ be the exponential generating function of all rooted labelled trees, where the exponent of the variable $z$ records the number of vertices. Hence $t_n$ is the number of rooted trees on $[1..n]$, given by Cayley's formula: $t_n=n^{n-1}$. We also let $F(z)=\sum_{n\geq 0} \frac{f_n}{n!}z^n$ be the exponential generating function of (unrooted) labelled forests (here $f_n$ is the number of forests on the vertex set $[1..n]$, and by convention $f_0=1$). We have:
\begin{lemma}\label{lemma:evalGF}
Both $T(z)$ and $F(z)$ have radius of convergence $e^{-1}$, and both are finite at their main singularity $z=e^{-1}$, where we have $T(e^{-1})=1$ and $F(e^{-1})=e^{1/2}$. Moreover for $z$ in a slit neighbourhood of $e^{-1}$ we have
\begin{align}\label{eq:evalT}
T(z)  &= 1 + O(\sqrt{1-ze})\;.
\end{align}
\end{lemma}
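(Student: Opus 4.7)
My plan is to rely on the standard analytic-combinatorics analysis of the exponential generating function of rooted labelled trees, and then deduce the statement for forests via a classical identity. The core input is the functional equation
\begin{equation*}
T(z) = z\, e^{T(z)},
\end{equation*}
which follows from the fact that a rooted labelled tree consists of a root labelled $1$ (or equivalently a specified root) together with a set of subtrees attached to it (this set is an arbitrary set of rooted trees, whence the $\exp$ on the right-hand side, after accounting for the labelling convention). Equivalently, $z = T\,e^{-T}$ is the compositional inverse of $T$.

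First I would set $\phi(u) = u e^{-u}$. A direct computation gives $\phi'(u)=(1-u)e^{-u}$, so the unique positive critical point of $\phi$ is $u=1$, with $\phi(1)=e^{-1}$. By the standard smooth implicit function argument (or the analytic inversion lemma, see e.g.\ Flajolet--Sedgewick VI), $T(z)$ extends analytically to a slit neighbourhood of $e^{-1}$, its radius of convergence equals $e^{-1}$, and $T(e^{-1})=1$. For the local behaviour near the singularity, I would Taylor-expand $\phi$ at $u=1$:
\begin{equation*}
\phi(u) = e^{-1} - \tfrac{1}{2} e^{-1} (u-1)^2 + O\bigl((u-1)^3\bigr),
\end{equation*}
so inverting gives $T(z) - 1 = -\sqrt{2}\,\sqrt{1-ez} + O(1-ez)$, which in particular yields $T(z) = 1 + O(\sqrt{1-ze})$.

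For $F(z)$, I would use the classical identity
\begin{equation*}
F(z) = \exp\bigl(U(z)\bigr), \qquad U(z) = T(z) - \tfrac{1}{2} T(z)^2,
\end{equation*}
where $U(z)$ is the EGF of unrooted labelled trees: the exponential relation comes from the fact that a forest is a set of unrooted trees (exp--log principle for labelled combinatorial classes), while the formula for $U$ is the dissymmetry identity (equivalently, it can be checked from $U'(z) = T(z)/z$, which follows from pointing a vertex in an unrooted tree). Evaluating at $z = e^{-1}$ using $T(e^{-1})=1$ yields $U(e^{-1}) = 1 - 1/2 = 1/2$ and hence $F(e^{-1}) = e^{1/2}$. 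The radius of convergence of $F$ equals that of $T$ because $U$ inherits its dominant singularity from $T$, and $\exp$ is entire.

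I do not expect any serious obstacle: the only step that requires care is justifying the local inversion near a critical value of $\phi$ and the associated square-root singular expansion, but this is a textbook application of the analytic inversion lemma, and the constants come out explicitly from the second-order Taylor coefficient of $\phi$ at $u=1$.
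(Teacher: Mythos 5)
Your proof is correct and follows essentially the same route as the paper's Appendix~\ref{app:gf}: analytic inversion of $T=ze^T$ near the critical point $u=1$ to get the square-root expansion, then the dissymmetry identity $U=T-\tfrac12T^2$ and $F=\exp U$ to transfer the result to forests. The only cosmetic differences are that you make the singular coefficient $-\sqrt{2}$ explicit and offer $U'(z)=T(z)/z$ as an alternative justification of the dissymmetry identity, whereas the paper leaves the constants unnamed and derives dissymmetry from the vertex/edge count.
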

The proof is a classical exercise in analytic combinatorics and is recalled in Appendix~\ref{app:gf}.

\section{Theorem~\ref{thm:main} for bridge-addable classes of forests}\label{sec:trees}

Balister, Bollob\'as and Gerke~{\cite[Lemma~2.1]{BBG}} proposed an elegant argument that reduces the statement of Theorem~\ref{thm:conj} to the case where all graphs in $\mathcal{G}$ are forests. 
As we will see in the next section, their idea can be adapted to the present context.
We will therefore start by proving Theorem~\ref{thm:main}  for classes $\cG$ composed by forests:
\begin{quote}
\it
Throughout the rest of Section~\ref{sec:trees}, we will assume that all graphs in $\cG$ are forests.
\end{quote}

\subsection{Good and bad boxes}
\label{subsec:GoodAndBadBoxes}
The main concern of the paper~\cite{CP15} was to obtain a version of the double-counting argument of Section~\ref{subsec:components} that is \emph{local} in the sense that it relates cardinalities of graphs corresponding to fixed boxes.

In order to select a collection of boxes, we will focus on the graphs in $\cG_n$ that have either one or two connected components, and, in view of this, we use the shorter notation $\mathcal{A}_n:=\cG^{(1)}_n$ and $\mathcal{B}_n:=\cG^{(2)}_n$.

Given $\epsilon$ (hence $\cU_\epsilon$) and $k_*$ (hence $\cT_*$), \cite[Lemma 17]{CP15} asserts that there exist integers $K$ and $w$ 
(independent of $\mathcal{G}$ and of $n$) and a set of $K$ disjoint boxes of width $w$ in~$\cE_{k_*}$, noted $\{[\beta_i]^w,\, 1\leq i \leq K\}$, such that if $q=q_\epsilon:=\lceil \epsilon^{-1}\rceil$ and if $n$  is large enough, 
then the $q$-neighbourhoods of boxes form a partition  of $\cE_{k_*}$,
\begin{align}\label{eq:Ppartition}
\biguplus_{i=1}^K  [\beta_i]^w_q = \cE_{k_*},
\end{align}
and moreover for each $U\in\cU_\epsilon$ we have:
\vspace{-2mm}
\begin{align}\label{eq:P1}
\sum_{i=1}^K  |\mathcal{B}^U_{n,[\beta_i]^w}| \geq  (1-\epsilon) |\mathcal{B}_n^{U}|. 
\end{align}
Note that from~\eqref{eq:Ppartition}, the boxes $[\beta_i]^w$ are $2q$-apart from each other, and yet~\eqref{eq:P1} ensures that they capture a proportion at least $(1-\epsilon)$ of the set $\mathcal{B}_n^U$ for each $U\in \cU_\epsilon$.
We now fix such a set of boxes, and we will keep referring to these  boxes $([\beta_i]^w)_{1\leq i \leq K}$ (or simply,  $([\beta_i])_{1\leq i \leq K}$) throughout Section~\ref{sec:trees}, keeping in mind that the number $K=K(\epsilon,k_*)$ of boxes, and their width $w=w(\epsilon, k_*)$, depend on $\epsilon$ and $k_*$ but neither on $\mathcal{G}$ nor on $n$.

In the present paper, one of the main tasks consists in showing that the global estimates obtained in~\cite{CP15}, such as Lemma~\ref{lemma:CP15Prop4}, can be ``lowered'' down to boxes for $\zeta$-tight classes. This is not true for every box in $\cE_{k_*}$, but it will be for certain boxes that contain most of the graphs in the class. For every $\gamma$ and every $\epsilon$, we say that a box $[\alpha]^w$ is \emph{$(\gamma,\epsilon)$-good} (or simply \emph{good}) if the two following conditions hold:
\begin{itemize}
\item[i)] $ |\cB_{n,[\alpha]^w}|\geq \left(\frac{1}{2}-\gamma\right)\cdot |\cA_{n,[\alpha]^w_{q}}| $, and
\item[ii)]
$\sum_{U\not\in\cU_\epsilon} |\cB^U_{n,[\alpha]^w}| < \gamma |\cB_{n,[\alpha]^w}|\;.$
\end{itemize}
Note that Property i) is a local version of the first inequality of Lemma~\ref{lem:tight} for $i=1$, while Property~ii) ensures that the number of graphs in sets that we do not control, is small.
 Hence good boxes are, in some sense, boxes that realize the tightness property locally. 

We will be interested in the boxes among the $[\beta_i]$ that are $(\gamma,\epsilon)$-good:
$$
\Good_{\gamma,\epsilon}:=\{i\in [1..K]:\, [\beta_i] \text{ is $(\gamma,\epsilon)$-good}\}\;.
\vspace{-1.5mm}
$$
An important step in the proof of Theorem~\ref{thm:main} is the following result:
\vspace{-1.5mm}
\begin{lemma}\label{lem:bad_cells}
For every $\gamma$ and every $\eta$, if $\epsilon<\epsilon_0(\gamma,\eta)$ and if $k_*\geq k_0(\epsilon)$, then there exists $\zeta$ such that for every $\zeta$-tight bridge-addable class $\cG$ and every large enough $n$, one has
$$
\frac{\sum_{i\notin \Good_{\gamma,\epsilon}} |\cA_{n,[\beta_i]^w_q}|}{|\cA_n|}<\eta\;,
$$
and
$$
\frac{\sum_{i\notin \Good_{\gamma,\epsilon}} |\cB_{n,[\beta_i]^w}|}{|\cB_n|}<\eta\;.
\vspace{-3mm}
$$
\end{lemma}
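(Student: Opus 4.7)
The plan is to treat the two types of bad boxes separately and combine the resulting bounds. As a preliminary step, I would apply Lemma~\ref{lem:tight} (with $i=1$) to obtain $|\cA_n|\leq(2+O(\zeta))|\cB_n|$, and establish the estimate $\sum_{U\notin\cU_\epsilon}|\cB_n^U|=O(\epsilon)|\cB_n|$ by a direct double-counting argument: since all graphs in $\cG$ are forests, each $H\in\cA_n$ is a tree with exactly $n-1$ bridges, while each $G\in\cB_n^U$ admits at least $|U|(n-|U|)\geq k(n-k)$ bridge-additions landing in $\cA_n$ for any $k\leq |U|$. Taking $k=\lceil\epsilon^{-1}\rceil+1$ gives $\sum_{U:|U|\geq k}|\cB_n^U|\cdot k(n-k)\leq (n-1)|\cA_n|$, whence the estimate.

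Let $B_1$ and $B_2$ denote the sets of indices $i\in[1..K]$ for which $[\beta_i]^w$ fails conditions (i) and (ii) respectively, so that the bad-box set is $B_1\cup B_2$. The (ii)-bad boxes are handled directly: by the definition of $B_2$ and the disjointness of the $[\beta_i]^w$,
$$
\sum_{i\in B_2}|\cB_{n,[\beta_i]^w}|\leq\frac{1}{\gamma}\sum_{U\notin\cU_\epsilon}|\cB_n^U|\leq\frac{O(\epsilon)}{\gamma}|\cB_n|,
$$
which is below $\eta|\cB_n|/4$ as soon as $\epsilon\ll\gamma\eta$. Moreover on $B_2\setminus B_1$ property (i) still holds, giving $|\cA_{n,[\beta_i]^w_q}|\leq|\cB_{n,[\beta_i]^w}|/(\tfrac12-\gamma)$, so the $B_2$-contribution to $\sum|\cA_{n,[\beta_i]^w_q}|$ is also $O(\epsilon/\gamma)|\cA_n|$.

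For the $B_1$ boxes I would invoke the local version of Lemma~\ref{lemma:CP15Prop4} emerging from the box-level double counting of~\cite{CP15}: for every box, $|\cB_{n,[\alpha]^w}|\leq(\tfrac12+\zeta_2)|\cA_{n,[\alpha]^w_q}|$, with $\zeta_2$ as small as desired provided $1/n,1/k_*,\epsilon$ are small enough. Summing~\eqref{eq:P1} over $U\in\cU_\epsilon$ and combining with the preliminary estimate yields $\sum_i|\cB_{n,[\beta_i]^w}|\geq(1-O(\epsilon))(\tfrac12-\zeta_1)|\cA_n|$, with $\zeta_1$ the error from Lemma~\ref{lem:tight}. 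Splitting the same sum according to $B_1$, using the local upper bound off $B_1$ and the defining strict inequality on $B_1$,
$$
\sum_i|\cB_{n,[\beta_i]^w}|\leq\left(\tfrac12+\zeta_2\right)|\cA_n|-(\gamma+\zeta_2)X_1,\qquad X_1:=\sum_{i\in B_1}|\cA_{n,[\beta_i]^w_q}|.
$$
Rearranging the two bounds produces $X_1\leq\tfrac{\zeta_1+\zeta_2+O(\epsilon)}{\gamma+\zeta_2}|\cA_n|\leq\eta|\cA_n|/4$ once $\zeta,1/k_*,\epsilon,\gamma,\eta$ are chosen in the order of~\eqref{eq:inequalities}. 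The $B_1$-contribution to the $\cB_n$-side then follows from $\sum_{i\in B_1}|\cB_{n,[\beta_i]^w}|<(\tfrac12-\gamma)X_1$, and summing the $B_1$ and $B_2$ contributions on each side, together with Lemma~\ref{lem:tight} to switch between $|\cA_n|$ and $|\cB_n|$, delivers the two claimed inequalities.

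The main obstacle is precisely the local upper bound used for $B_1$: the global Lemma~\ref{lemma:CP15Prop4} alone is insufficient, and I would need to verify that the double counting underlying it localizes to a bound of the form $(\tfrac12+\zeta_2)|\cA_{n,[\alpha]^w_q}|$ for every individual box (with the $q$-enlargement absorbing the deficit). This is where the hypothesis $k_*\geq k_0(\epsilon)$ is used: with enough test trees in $\cT_*$, each box pins down the relevant local statistics tightly enough that the restriction of the optimization problem of~\cite{CP15} is close to its extremum, forcing the local ratio to be close to $\tfrac12$ uniformly.
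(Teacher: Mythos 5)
Your overall decomposition matches the paper's: split the bad boxes into those failing condition (i) and those failing condition (ii), bound the (ii)-contribution using the global tail estimate $\sum_{U\notin\cU_\epsilon}|\cB_n^U|=O(\epsilon)|\cA_n|$, and control the (i)-contribution by comparing a local upper bound with the global lower bound from Lemma~\ref{lem:tight}. This is essentially the argument the paper gives, with the same key ingredients. Your preliminary double-counting derivation of the tail estimate is a nice elementary substitute for citing Equation~(17) of~\cite{CP15}.

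However, there is a genuine gap, and you correctly sense it in your last paragraph, but the way you imagine resolving it is not quite right. The per-box inequality you want,
$$
|\cB_{n,[\alpha]^w}| \leq \left(\tfrac{1}{2}+\zeta_2\right)|\cA_{n,[\alpha]^w_q}|,
$$
is not available, and in fact cannot be true for every box: a single box could contain graphs of $\cB_n^U$ for $U\notin\cU_\epsilon$ in numbers that are uncontrolled at the level of an individual box. What the double-counting of~\cite{CP15} (their Equation~(16)) actually localizes is only the $\cU_\epsilon$-restricted count,
$$
\sum_{U\in\cU_\epsilon}|\cB^U_{n,[\alpha]^w}| \leq \left(\tfrac{1}{2}+2\epsilon\right)|\cA_{n,[\alpha]^w_q}|,
$$
and no amount of enlarging $k_*$ will make the tail $\sum_{U\notin\cU_\epsilon}|\cB^U_{n,[\alpha]^w}|$ controllable per box. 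The hypothesis $k_*\geq k_0(\epsilon)$ is needed for the restricted local bound, not for absorbing the tail. The paper's proof sidesteps this by never bounding $|\cB_{n,[\beta_i]^w}|$ locally: it works with the restricted sum on good boxes, and it handles the large-$U$ contribution \emph{only in aggregate}, using that $\sum_{U\notin\cU_\epsilon}|\cB_n^U|\leq 2\epsilon|\cA_n|$ and folding that error into the final constants. Your rearrangement would go through if you replace your local upper bound by the $\cU_\epsilon$-restricted one and carry the $O(\epsilon)|\cA_n|$ tail term alongside through the inequality chain, which is exactly what the paper does.
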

\begin{proof}
Let $\epsilon>0$ (to be fixed later). Up to setting $k_*$ and $n$ large enough, we can use Equation~(16) in~\cite{CP15} for each $1\leq i\leq K$,
\begin{align*}
\sum_{U\in\cU_\epsilon}|\cB^U_{n,[\beta_i]^w}|\leq
 \frac{1}{2}\cdot |\cA_{n,[\beta_i]^w_q}| (1+3\epsilon) \leq
 \left(\frac{1}{2}+2\epsilon\right)\cdot |\cA_{n,[\beta_i]^w_q}|\;.
\end{align*}
Moreover, provided that $n$ is large enough, we have (Equation~(17) in~\cite{CP15})
\begin{align}\label{eq:ub_largeU}
\sum_{U\not\in\cU_\epsilon}|\cB^U_{n}| \leq 2\epsilon \left|\mathcal{A}_n\right|
\end{align}
From the last two inequalities, we have
\begin{align}\label{eq:ub_local}
\sum_{i \in \Good_{\gamma,\epsilon}}|\cB_{n,[\beta_i]^w}|
\leq
2\epsilon\left|\mathcal{A}_n\right|+ 
\left(\frac{1}{2}+2\epsilon\right)\sum_{i \in \Good_{\gamma,\epsilon}}|\cA_{n,[\beta_i]^w_q}|\;.
\end{align}
Let $S$ and $T$ be the sets of indices $i\not\in\Good_{\gamma,\epsilon}$ such that $[\beta_i]^w$ violates i) and ii) respectively. Using~\eqref{eq:ub_largeU}, we have
\begin{align*}
 \sum_{i\in T}  |\cB_{n,[\beta_i]^w}|\leq  \sum_{i\in T}  \frac{1}{\gamma}\sum_{U\not\in\cU_\epsilon} |\cB^U_{n,[\beta_i]^w}| \leq\frac{1}{\gamma}\sum_{U\not\in\cU_\epsilon}|\cB^U_{n}|\leq \frac{2\epsilon}{\gamma} |\cA_n|\;.
\end{align*}
From the previous equation it follows that
\begin{align}\label{eq:ub_nongood}
\sum_{i \not\in \Good_{\gamma,\epsilon}}  |\cB_{n,[\beta_i]^w}|
&\leq
 \sum_{i\in S}  |\cB_{n,[\beta_i]^w}| +\sum_{i\in T} |\cB_{n,[\beta_i]^w_q}|\nonumber\\
&\leq
\left(\frac{1}{2}-\gamma\right)\sum_{i\notin \Good_{\gamma,\epsilon}} |\cA_{n,[\beta_i]^w_q}| + \frac{2\epsilon}{\gamma} \left|\mathcal{A}_n\right|\;.
\end{align}
Using~\eqref{eq:ub_local}  and~\eqref{eq:ub_nongood}, we get
\begin{align*}
(\gamma+2\epsilon)\sum_{i\notin \Good_{\gamma,\epsilon}} |\cA_{n,[\beta_i]^w_q}|
\leq& 
(\gamma+2\epsilon)\sum_{i\notin \Good_{\gamma,\epsilon}} |\cA_{n,[\beta_i]^w_q}|+
\sum_{i\notin \Good_{\gamma,\epsilon}} |\cB_{n,[\beta_i]^w}|\\
&+\sum_{i\in \Good_{\gamma,\epsilon}} |\cB_{n,[\beta_i]^w}|
-\sum_{i=1}^K |\cB_{n,[\beta_i]^w}|\\
& \leq  (\gamma+2\epsilon)\sum_{i\notin \Good_{\gamma,\epsilon}} |\cA_{n,[\beta_i]^w_q}|+
\left(\frac{1}{2}-\gamma\right)\sum_{i\notin \Good_{\gamma,\epsilon}} |\cA_{n,[\beta_i]^w_q}|\\
&+\left(\frac{1}{2}+2\epsilon\right)\sum_{i\in \Good_{\gamma,\epsilon}} |\cA_{n,[\beta_i]^w_q}|+\frac{4\epsilon}{\gamma}\left|\mathcal{A}_n\right|-\sum_{i=1}^K |\cB_{n,[\beta_i]^w}|
\end{align*}
The last inequality can be simplified as
\begin{align}
(\gamma+2\epsilon)\sum_{i\notin \Good_{\gamma,\epsilon}} |\cA_{n,[\beta_i]^w_q}|
\leq
& \left(\frac{1}{2}+2\epsilon\right) \sum_{i=1}^K |\cA_{n,[\beta_i]^w_q}| -\sum_{i=1}^K |\cB_{n,[\beta_i]^w}|+\frac{4\epsilon}{\gamma}\left|\mathcal{A}_n\right| \nonumber \\
&\leq \left(\frac{1}{2}+\frac{6\epsilon}{\gamma}\right) \left|\mathcal{A}_n\right| -\sum_{i=1}^K |\cB_{n,[\beta_i]^w}| \label{eq:ub_int}
\end{align}
where we used that the $[\beta_i]^w_q$ are disjoint. 
Using~\eqref{eq:ub_largeU} again and \eqref{eq:P1}, we have
\begin{align*}
\sum_{i=1}^K |\cB_{n,[\beta_i]^w}|
&\geq \sum_{i=1}^K \sum_{U\in \cU_\epsilon}  |\cB^U_{n,[\beta_i]^w}|\\
&\geq (1-\epsilon) (|\cB_n|- 2\epsilon |\cA_n|)\\
&\geq (1-\epsilon) |\cB_n|-  2\epsilon |\cA_n|\\
\end{align*}
Finally, Lemma~\ref{lem:tight} with $i=1$ and $\eta$ replaced by $\epsilon$, implies that if $\zeta$ is small enough,  $\cG$ is $\zeta$-tight and $n$ is large enough, then the last quantity is larger than
$
(1/2-4\epsilon)\left|\mathcal{A}_n\right|.
$

We now choose $\epsilon_0= \frac{\eta \gamma}{20}$. Going back to \eqref{eq:ub_int}, if $\epsilon<\epsilon_0$, we get
\begin{align}\label{eq:first_part}
\sum_{i\notin \Good_{\gamma,\epsilon}} |\cA_{n,[\beta_i]^w_q}|
\leq \frac{10\epsilon}{\gamma(\gamma+2\epsilon)} \left|\mathcal{A}_n\right|\leq \frac{\eta}{2}|\cA_n|\;,
\end{align}
which proves the first part of the lemma.

For the second part of the lemma, we use~\eqref{eq:ub_nongood} and Lemma~\ref{lem:tight} with $\eta$ replaced by $\epsilon$, to get
\begin{align*}
\frac{\sum_{i\notin \Good_{\gamma,\epsilon}} |\cB_{n,[\beta_i]^w}|}{|\cB_n|}&\leq \frac{\left(\frac{1}{2}-\gamma\right)\sum_{i\notin \Good_{\gamma,\epsilon}} |\cA_{n,[\beta_i]^w_q}|+\frac{2\epsilon}{\gamma}|A_n|}{\left(\frac{1}{2}-\epsilon\right)|\cA_n|}\\
\end{align*}
By~\eqref{eq:first_part}, we conclude
\begin{align*}
\frac{\sum_{i\notin \Good_{\gamma,\epsilon}} |\cB_{n,[\beta_i]^w}|}{|\cB_n|}
&\leq \frac{\left(\frac{1}{2}-\gamma\right)\frac{\eta}{2}|\cA_n|+\frac{2\epsilon}{\gamma}|A_n|}{\left(\frac{1}{2}-\epsilon\right)|\cA_n|}
\leq \eta\;.\qedhere
\end{align*}
\end{proof}

\subsection{Stability of the extremum for the optimization problem}\label{ssc:stability}
\label{subsec:stability}
The goal of this subsection is to estimate the ratio between  $|\cB^U_{n,[\alpha]^w}|$ and $|\cA_{n,[\alpha]^w_q}|$, when $[\alpha]^w$ is a good box and $U\in \cU_\epsilon$. Recall that good boxes are the ones that locally inherit the global property of being tight; for instance, in good boxes we have a precise estimation of the ratio between $ |\cB_{n,[\alpha]^w}|$ and $|\cA_{n,[\alpha]^w_q}|$.

In order to do that, we will need to return to the original ``optimization problem'' introduced in~\cite{CP15}. Namely, we will study certain functionals of the ratios $|\cB^U_{n,[\alpha]^w}|/|\cA_{n,[\alpha]^w_q}|$, or more precisely of the variables $(z_{n,\alpha}^U)_{U\in \cU_\epsilon}$, defined by~\eqref{eq:defzn} below. 
We will proceed as follows: Lemma~\ref{lem:z_in_D} gives the ``constraints'' of the optimization problem, by showing that the variables $z_{n,\alpha}^U$ have to be close to a certain domain $D$; Lemma~\ref{lem:good_implies_Y_large} shows that if $[\alpha]^w$ is good, then the ``objective function'' of the optimization problem has to be close to its optimal value given these constraints (which was proved to be $\frac{1}{2}$ in~\cite{CP15}).
Then Lemma~\ref{lem:omega_controled} proves a form of unicity of the extremum. From these three lemmas we deduce the main results of this subsection: if $[\alpha]^w$ is good, then $(z_{n,\alpha}^U)_{U\in \cU_\epsilon}$ is close to $p_\infty(U)$ for each unrooted tree $U$ of bounded size~(Proposition~\ref{lem:1}) and if $[\alpha]^w$ is good, then $\alpha(T)/n$ is close to $a_\infty(T)$ for each rooted tree $T$ of bounded size~(Proposition~\ref{lem:2})

Apart from the proof of Lemma~\ref{lem:bad_cells} already given, the proofs of Lemmas~\ref{lem:z_in_D}--\ref{lem:good_implies_Y_large}--\ref{lem:omega_controled} are the part of the present paper that rely the most on \cite{CP15}. Indeed, we will refer to several technical statements therein in our proofs. This will no longer be the case in the next sections.

Following~\cite{CP15}, given $\epsilon$ (hence $\cU_\epsilon$) we define a \emph{$\cU_\epsilon$-admissible decomposition  of $T$} as an increasing sequence $\mathbf{T}=(T_i)_{i\leq \ell}$ of labeled trees $$T_1\subset \dots \subset T_\ell =T$$ for some $\ell \geq 1$ called the \emph{length}, such that $T_1\in \mathcal{U}_\epsilon$ and, for each $2\leq i \leq\ell$, $T_{i}$ is obtained by joining $T_{i-1}$ by an edge $e_i$ to some tree $U_i\in \cU_\epsilon$.
 The \emph{weight} of $\mathbf{T}$ with respect to $\mathbf{z}=(z^U)_{U\in\cU_\epsilon}\in (\mathbb{R}_+)^{\cU_\epsilon}$ is defined as $\omega (\mathbf{T}, \mathbf{z}) =\prod_{i=1}^{\ell} z^{U_i}$, where $U_i=T_i\setminus T_{i-1}$ as an unrooted tree~(here we use the convention $T_0=\emptyset$).  The \emph{maximum weight} of $T$ with respect to $\mathbf{z}$, denoted by $\omega(T,\mathbf{z})$, is defined as the maximum of $\omega (\mathbf{T}, \mathbf{z})$ over all the $\cU_\epsilon$-admissible decompositions $\mathbf{T}$ of~$T$.

We now use $\omega(T,\mathbf{z})$ to define the following partition functions,
\begin{align*}
Y(\mathbf{z}) := \sum_{T\in \cT} \frac{\omega(T,\mathbf{z})}{\mathrm{Aut}_r(T)}, & \qquad
Y^u(\mathbf{z}) := \sum_{U\in \cU} \frac{\omega(U,\mathbf{z})}{\mathrm{Aut}_u(U)}\;,
\\
Y_{\cT_*}(\mathbf{z}) := \sum_{T\in \cT_*} \frac{\omega(T,\mathbf{z})}{\mathrm{Aut}_r(T)}, & \qquad
Y^u_{\cU_\epsilon}(\mathbf{z}) := \sum_{U\in \cU_\epsilon} \frac{\omega(U,\mathbf{z})}{\mathrm{Aut}_u(U)}\\
Y_{\leq k}(\mathbf{z}):= \sum_{T\in \cT_{\leq k}} \frac{\omega(T,\mathbf{z})}{\mathrm{Aut}_r(T)}& \qquad
\tilde{Y}^u_{\cU_\epsilon}(\mathbf{z}) := \sum_{U\in \cU_\epsilon} \frac{z^U}{\mathrm{Aut}_u(U)}\;.
\end{align*}
Furthermore, we define the domain of convergence of $Y(\mathbf{z})$ as follows,
$$
D:=\{\mathbf{z} \in (\mathbb{R}_+)^{\cU_\epsilon}, Y(\mathbf{z})<\infty\}\;.
$$
It is important to note that there is an implicit dependence of $\omega(T,\mathbf{z})$ on $\epsilon$ (via $\cU_\epsilon$-admissible decompositions). Hence, all the partition functions defined above (and their respective domains) also depend on $\epsilon$. In order to keep the notation light we do not make this dependence explicit.

Let $\mathbf{j}:=(1)_{U\in \cU_\epsilon}$ be the all-one vector of length $|\cU_\epsilon|$. Given a choice of $n$, to each $\alpha\in \cE_{k_*}$ we assign a vector  $\mathbf{z}_{n,\alpha}= (z_{n,\alpha}^U)_{U \in \cU_\epsilon} \in (\mathbb{R}_+)^{\cU_\epsilon}$, where
\begin{align}\label{eq:defzn}
z_{n,\alpha}^U:=\Aut_u(U) \frac{|\mathcal{B}^U_{n,[\alpha]^w}|}{|\mathcal{A}_{n,[\alpha]^w_{q}}|} \left(1-\frac{|U|}{n}\right)\;,
\end{align}
where $q = \lceil \epsilon^{-1}\rceil$ as before and $w=w(\epsilon,k_*)$ is chosen as in Section~\ref{subsec:GoodAndBadBoxes}.

\begin{lemma}\label{lem:z_in_D}
For every $\xi$ and every $\epsilon$,  if $k_* \geq  k_0(\epsilon,\xi)$ and $n$ is large enough, then for every $\alpha\in \cE_{k_*}$ we have that $\mathbf{z}_{n,\alpha}-\xi \mathbf{j}\in D$.
\end{lemma}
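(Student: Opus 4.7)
The goal is to show $Y(\mathbf{z}_{n,\alpha}-\xi\mathbf{j}) < \infty$, i.e.\ that $\mathbf{z}_{n,\alpha}-\xi\mathbf{j}\in D$. My strategy combines a uniform a priori bound on the coordinates of $\mathbf{z}_{n,\alpha}$ with a multiplicative contraction produced by the shift by $\xi$, and then identifies the resulting series as an evaluation of a classical rooted tree generating function.

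\textbf{Step 1 (uniform bound on entries).} A standard bridge-adding double count, in the spirit of~\cite{CP15}, shows that for each $U\in \cU_\epsilon$ one has
$$
|U|(n-|U|)\,|\cB^U_{n,[\alpha]^w}|\;\leq\;(n-1)\,|\cA_{n,[\alpha]^w_q}|,
$$
since adding any of the $|U|(n-|U|)$ possible bridges to a graph $G\in \cB^U_{n,[\alpha]^w}$ produces a graph in $\cA_{n,[\alpha]^w_q}$ (the width $q$ absorbs the local changes in pendant statistics), while each graph in $\cA_{n,[\alpha]^w_q}$ has at most $n-1$ cut-edges. Summed over $U\in\cU_\epsilon$ (and using $|U|(n-|U|)\geq n-1$), this gives $\sum_{U\in\cU_\epsilon} z_{n,\alpha}^U/\Aut_u(U)\leq 1-1/n$, and in particular $z_{n,\alpha}^U\leq \Aut_u(U)\leq A(\epsilon)$ for some constant $A(\epsilon)$ depending only on $\epsilon$.

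\textbf{Step 2 (shift contracts $\omega$ multiplicatively).} Set $r:=1-\xi/A(\epsilon)\in(0,1)$. The elementary inequality $b-\xi\leq r\,b$ on $[\xi,A(\epsilon)]$, applied factor by factor to any $\cU_\epsilon$-admissible decomposition $\mathbf{T}=(T_1,\ldots,T_\ell)$ of $T$ whose pieces all satisfy $z_{n,\alpha}^{U_i}\geq\xi$, yields $\omega(\mathbf{T},\mathbf{z}_{n,\alpha}-\xi\mathbf{j})\leq r^{\ell}\,\omega(\mathbf{T},\mathbf{z}_{n,\alpha})$. Since every piece has $|U_i|\leq q=\lceil\epsilon^{-1}\rceil$, any admissible decomposition of $T$ has length $\ell\geq |T|/q$. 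Decompositions using "light" pieces $U_i$ with $z_{n,\alpha}^{U_i}<\xi$ contribute non-positively and can be discarded from the max. Taking the maximum over decompositions therefore gives
$$
\omega(T,\mathbf{z}_{n,\alpha}-\xi\mathbf{j})\;\leq\;r^{|T|/q}\;\omega(T,\mathbf{z}_{n,\alpha}).
$$

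\textbf{Step 3 (finiteness via the rooted tree EGF).} It remains to bound $\sum_{T\in\cT}r^{|T|/q}\,\omega(T,\mathbf{z}_{n,\alpha})/\Aut_r(T)$. Bounding $\omega$ above by the sum over admissible decompositions (rather than the maximum) and switching the order of summation, the sum reorganises into a sum over "growth sequences" (a seed $T_1\in\cU_\epsilon$ followed by successive attachments of small pieces). By standard symbolic combinatorics, this is an evaluation of the classical rooted tree EGF $T(z)$ of Lemma~\ref{lemma:evalGF} at a point proportional to $r^{1/q}\cdot\tilde{Y}^u_{\cU_\epsilon}(\mathbf{z}_{n,\alpha})$. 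The global bound of Step 1, $\tilde{Y}^u_{\cU_\epsilon}(\mathbf{z}_{n,\alpha})\leq 1-1/n$, places the unshifted evaluation right at $T(z)$'s main singularity $z=e^{-1}$, and the strict contraction $r^{1/q}<1$ then pushes the evaluation inside the disk of convergence. Lemma~\ref{lemma:evalGF} delivers finiteness. The hypothesis $k_*\geq k_0(\epsilon,\xi)$ ensures that the partition of $\cE_{k_*}$ into boxes is fine enough that the implicit constants in this comparison can be controlled uniformly over $\alpha\in\cE_{k_*}$.

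\textbf{Main obstacle.} The delicate point is Step 3. Since $z_{n,\alpha}^U$ can reach values as large as $\Aut_u(U)$ (factorial in $|U|$ for stars), a naive termwise bound on $\omega$ grows factorially and cannot compete with the exponential number of rooted trees of given size. What saves the argument is that $\omega$ is a \emph{maximum} of growth-sequence weights, so that the full series is effectively an evaluation of the tree EGF $T(z)$; by the bound $\tilde Y^u_{\cU_\epsilon}(\mathbf{z}_{n,\alpha})\leq 1$ this evaluation sits precisely on the critical circle $|z|=e^{-1}$, and the shift by $\xi$ plays exactly the role of a strict-interior regularization pushing it inside the radius of convergence.
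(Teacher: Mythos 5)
Your approach diverges substantially from the paper's (which proceeds by contradiction via compactness, extracting limit points of $(\mathbf{z}_{n,\alpha_{n,k}})_n$ and then of $(\mathbf{z}_k)_k$, and invokes Lemma~13, Corollary~12, and the Lemma~16-style argument from~\cite{CP15}). Steps~1 and~2 of your proposal are reasonable, but Step~3 contains a fatal gap.

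The problem is the replacement of the maximum over decompositions by the \emph{sum} over decompositions. This bound is catastrophically lossy: the number of $\cU_\epsilon$-admissible decompositions of a labeled tree $T$ is super-exponential in $|T|$. Already in the toy case $\cU_\epsilon=\{\bullet\}$ one sees this: the number of pairs (labeled rooted tree on $[n]$, connected vertex-by-vertex history) is $n!\,(n-1)!$, so $\sum_T \sum_{\mathbf{T}}\omega(\mathbf{T},\mathbf{z})/\Aut_r(T)$ has coefficients of order $(n-1)!\,(z^\bullet)^n$ and diverges for every $z^\bullet>0$, no matter how much geometric contraction $r^{|T|/q}$ you carry along. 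So the series does \emph{not} ``reorganize into an evaluation of $T(z)$''. Relatedly, the bound $\tilde{Y}^u_{\cU_\epsilon}(\mathbf{z}_{n,\alpha})\leq 1$ from Step~1 is much too weak to place the evaluation on the critical circle. Again in the $\cU_\epsilon=\{\bullet\}$ case, $\tilde Y^u\leq 1$ reads $z^\bullet\leq 1$ while $Y(\mathbf{z})=T(z^\bullet)$ converges only for $z^\bullet\leq e^{-1}$; the gap is a fixed factor of $e$, not something a vanishingly small shift $\xi$ can close. The substantive constraint one needs is the far stronger $Y_{\leq k_*}(\mathbf{z}_{n,\alpha})\leq 1$, which is precisely \cite[Corollary~12]{CP15} and is a statement about the \emph{max}-weight functional, not about $\tilde Y^u$. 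That corollary, together with the downward-closedness of $D$ and the $k\to\infty$ limit argument (this is where the hypothesis $k_*\geq k_0(\epsilon,\xi)$ actually does its work — a point your proposal mentions only as a handwave), is what the paper's proof leans on. Without importing those results, the finiteness of $Y(\hat{\mathbf{z}})$ does not follow from the elementary bounds you establish.
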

\begin{proof}
For the sake of contradiction, assume that there exist $\xi$ and $\epsilon$ such that for every $k_0$ there exists $k\geq k_0$ such that for every large enough $n$ there exists $\alpha_{n,k}\in \cE_k$ with
$$
\mathbf{z}_{n,\alpha_{n,k}} -\xi \mathbf{j}\notin D\;.
$$

For a given $k\geq k_0$, let $\mathbf{z}_k$ be a limit point of the sequence $(\mathbf{z}_{n,\alpha_{n,k}})_{n\geq 1}$.  Since $D$ is closed downwards (Lemma~13 in~\cite{CP15}), then $\mathbf{z}_{k} -\frac{\xi}{2} \mathbf{j}\notin D$.

Moreover, by Corollary~12 in~\cite{CP15}, we have $Y_{\leq k}(\mathbf{z}_k)\leq 1$. Similarly as in \cite[Lemma~16]{CP15}, this implies that any limit point $\mathbf{z}_\infty$ of $(\mathbf{z}_k)_{k\geq k_0}$ satisfies $\mathbf{z}_\infty\in\overline{D}$. This is a contradiction with the fact that $\mathbf{z}_{k} -\frac{\xi}{2} \mathbf{j}\notin D$ for every $k\geq k_0$. 
\end{proof}

The following lemma shows that if $[\alpha]^w$ is $(\gamma,\epsilon)$-good, then the evaluation of $Y^u$ in a point close to $\mathbf{z}_{n,\alpha}$ is close to $\tfrac{1}{2}$ (which was shown in~\cite{CP15} to be the maximum of $Y^u$ on $D$).
\begin{lemma}\label{lem:good_implies_Y_large}
For every $\rho$, every $\epsilon$ and every $\ell$ such that $\ell<1/\epsilon$,
if $\gamma\leq \gamma_0(\rho,\ell)$, $\xi\leq \xi_0(\rho,\epsilon,\ell)$, $k_*\geq k_0(\epsilon,\xi)$ and $n$ is large enough, then
for every box $[\alpha]^w$ which is $(\gamma,\epsilon)$-good the following holds for $\hat{\mathbf{z}}:=\mathbf{z}_{n,\alpha}-\xi \mathbf{j}$: we have $\hat{\mathbf{z}}\in D$,
 $$
 Y^u(\hat{\mathbf{z}})>\frac{1}{2}-\rho\;,
 $$
and for every $U\in \cU_{\leq \ell}$, we have
$$
|\omega(U,\hat{\mathbf{z}})-\hat{z}^{U}|\leq \rho\;.
$$ 
\end{lemma}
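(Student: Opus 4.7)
The plan is to prove the three assertions in sequence, combining the local gap produced by the goodness of $[\alpha]^w$ with the global upper bound $Y^u(\mathbf{z})\leq \tfrac{1}{2}$ on $D$ established in \cite{CP15}. The containment $\hat{\mathbf{z}}\in D$ is a direct application of Lemma~\ref{lem:z_in_D} (with the given $\xi$, $k_*\geq k_0(\epsilon,\xi)$, and $n$ large enough).

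For the lower bound $Y^u(\hat{\mathbf{z}})>\tfrac{1}{2}-\rho$, I would unpack the definition of $z_{n,\alpha}^U$ to write
\[
\tilde Y^u_{\cU_\epsilon}(\mathbf{z}_{n,\alpha}) = \sum_{U\in\cU_\epsilon}\frac{|\cB^U_{n,[\alpha]^w}|}{|\cA_{n,[\alpha]^w_q}|}\left(1-\frac{|U|}{n}\right).
\]
Goodness properties ii) and i) yield, respectively, $\sum_{U\in\cU_\epsilon}|\cB^U_{n,[\alpha]^w}|\geq (1-\gamma)|\cB_{n,[\alpha]^w}|\geq (1-\gamma)(\tfrac{1}{2}-\gamma)|\cA_{n,[\alpha]^w_q}|$, and since $|U|\leq \lceil\epsilon^{-1}\rceil$ on $\cU_\epsilon$, the factor $(1-|U|/n)$ costs only $o_n(1)$. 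Hence $\tilde Y^u_{\cU_\epsilon}(\mathbf{z}_{n,\alpha})\geq \tfrac{1}{2}-O(\gamma)-o_n(1)$. The shift by $-\xi\mathbf{j}$ decreases $\tilde Y^u_{\cU_\epsilon}$ by at most $\xi\cdot C(\epsilon)$, where $C(\epsilon):=\sum_{U\in\cU_\epsilon}1/\Aut_u(U)$ is a finite constant depending only on $\epsilon$. The trivial decomposition $T_1=U$ gives $\omega(U,\hat{\mathbf{z}})\geq \hat z^U$ for every $U\in\cU_\epsilon$, hence $Y^u(\hat{\mathbf{z}})\geq \tilde Y^u_{\cU_\epsilon}(\hat{\mathbf{z}})$; choosing $\gamma,\xi$ small enough relative to $\rho,\epsilon$ closes this step.

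For the refined estimate $|\omega(U,\hat{\mathbf{z}})-\hat z^U|\leq \rho$ on $U\in\cU_{\leq\ell}$, I would exploit the termwise non-negative identity
\[
Y^u(\hat{\mathbf{z}})-\tilde Y^u_{\cU_\epsilon}(\hat{\mathbf{z}}) = \sum_{U\in\cU_\epsilon}\frac{\omega(U,\hat{\mathbf{z}})-\hat z^U}{\Aut_u(U)} + \sum_{U\notin\cU_\epsilon}\frac{\omega(U,\hat{\mathbf{z}})}{\Aut_u(U)}.
\]
The global upper bound $Y^u(\hat{\mathbf{z}})\leq \tfrac{1}{2}$ on $D$, combined with the previous step applied with a smaller $\rho':=\rho/\ell!$, bounds the left-hand side by $\rho'$. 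Isolating a single term with $U\in\cU_{\leq\ell}\subseteq\cU_\epsilon$ (the inclusion uses $\ell<1/\epsilon$) gives $\omega(U,\hat{\mathbf{z}})-\hat z^U \leq \Aut_u(U)\cdot\rho' \leq \ell!\cdot\rho' = \rho$. This forces the dependencies $\gamma\leq \gamma_0(\rho,\ell)$ and $\xi\leq \xi_0(\rho,\epsilon,\ell)$ stated in the lemma. The matching lower bound $\omega(U,\hat{\mathbf{z}})\geq \hat z^U$ is again the trivial decomposition.

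The one external input I would need is the upper bound $Y^u(\mathbf{z})\leq \tfrac{1}{2}$ on $D$, which is the variational content of \cite{CP15} (the same content that powers Theorem~\ref{thm:conj}). Beyond that the main subtlety is purely quantitative: keeping track of how the small parameters $\gamma$ and $\xi$ must depend on $\rho$, $\epsilon$, and $\ell$ in order to absorb the factors $C(\epsilon)$ and $\Aut_u(U)\leq \ell!$; this is the only step where carelessness would break the quantifier chain.
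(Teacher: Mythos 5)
Your proposal is correct and follows essentially the same route as the paper's proof: use goodness properties i) and ii) to lower-bound $\tilde Y^u_{\cU_\epsilon}(\mathbf{z}_{n,\alpha})$, control the shift by $-\xi\mathbf{j}$ via a constant depending only on $\epsilon$, pass to $Y^u$ by monotonicity, invoke Lemma~\ref{lem:z_in_D} for membership in $D$ and the upper bound $Y^u\leq\frac12$ from~\cite{CP15} (Lemma~14), and finally localise the resulting $\rho/\ell!$ gap to each term via $\Aut_u(U)\leq\ell!$. The only cosmetic difference is that the paper fixes $\gamma_0=\rho/(4\ell!)$ and $\xi_0=\rho/(2|\cU_\epsilon|\ell!)$ explicitly at the outset, whereas you leave the quantitative dependence implicit; the logical content is identical.
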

\begin{proof} Let $\gamma_0:=\frac{\rho}{4\ell!}$ and $\xi_0:=\frac{\rho}{2|\cU_\epsilon|\ell!}$.
Consider $\alpha \in \cE_{k_*}$ such that the box $[\alpha]^w$ is $(\gamma,\epsilon)$-good. Using the properties~i) and ~ii) of good boxes, and~\eqref{eq:defzn}, we have
\begin{align*}
\tilde{Y}^u_{\cU_\epsilon}(\mathbf{z}_{n,\alpha}) 
=\sum_{U\in\cU_\epsilon} \frac{z^U_{n,\alpha}}{\Aut_u(U)} 
&= \frac{1}{|\cA_{n,[\alpha]^w_q}|}\sum_{U\in\cU_\epsilon}|\cB^U_{n,[\alpha]^w}|\left(1-\frac{|U|}{n}\right)\\ 
&\geq \frac{1}{|\cA_{n,[\alpha]^w_q}|}|\cB_{n,[\alpha]^w}|(1-\gamma)\left(1-\frac{|U|}{n}\right)\\ 
&\geq \left( \frac{1}{2}-\gamma\right)\left(1-\gamma\right)\left(1-\frac{1}{\epsilon n}\right)\\ 
&\geq \frac{1}{2}-2\gamma\;,
\end{align*}
provided that $n$ is large enough.
Now, since $\tilde Y^u_{\cU_\epsilon}(\mathbf{\hat z})$ is a finite sum, we have
 \begin{align*}
 \tilde Y^u_{\cU_\epsilon}(\hat{\mathbf{z}})& \geq \tilde Y^u_{\cU_\epsilon}(\mathbf{z}_{n,\alpha})- \xi |\cU_\epsilon|\;.
\end{align*}
Together with the previous inequality and the choice of $\gamma_0$ and $\xi_0$, this implies
\begin{align}\label{eq:tilde}
 \tilde Y^u_{\cU_\epsilon}(\hat{\mathbf{z}})\geq \frac{1}{2}- \left(\xi |\cU_\epsilon|+2\gamma\right)\geq \frac{1}{2} - \frac{\rho}{\ell!}\;.
 \end{align}
By definition of maximum weight, for every $U\in \cU_\epsilon$ we have $\omega(U,\mathbf{z})\geq z^U$, which directly implies $ Y^u_{\cU_\epsilon}(\mathbf{z})\geq  \tilde Y^u_{\cU_\epsilon}(\mathbf{z})$. We thus conclude the first part of the lemma,
$$
 Y^u(\hat{\mathbf{z}})\geq  Y^u_{\cU_\epsilon}(\hat{\mathbf{z}})\geq  \tilde Y^u_{\cU_\epsilon}(\hat{\mathbf{z}})\geq \frac{1}{2}- \frac{\rho}{\ell!}>\frac{1}{2}-\rho\;
$$
Observe that this is true even if $\hat{\mathbf{z}}\not\in D$, since then the LHS is infinite.
\smallskip

By Lemma~\ref{lem:z_in_D}, we can choose $k_0=k_0(\epsilon,\xi)$ such that if $k_*\geq k_0$ and $n$ is large enough, we have $\hat{\mathbf{z}}\in D$. The choice of $k_*$ and $n$ is suitable \emph{for all} vectors in $\cE_{k_*}$. Then, Lemma~14 in~\cite{CP15} implies that $Y^u_{\cU_\epsilon}(\hat{\mathbf{z}})\leq Y^u(\hat{\mathbf{z}})\leq \frac{1}{2}$. Together with~\eqref{eq:tilde}, for every $U\in\cU_\epsilon$ we have
$$
\frac{\rho}{\ell!} \geq |Y^u_{\cU_\epsilon}(\hat{\mathbf{z}})- \tilde{Y}^u_{\cU_\epsilon}(\hat{\mathbf{z}}) |= \left|\sum_{U'\in\cU_\epsilon}  \frac{\omega(U',\hat{\mathbf{z}})-\hat{z}^{U'}}{\Aut_u(U')}\right|\geq \frac{|\omega(U,\hat{\mathbf{z}})-\hat{z}^{U}|}{\Aut_u(U)}\;,
$$
where the last inequality follows since $\omega(U',\hat{\mathbf{z}})\geq \hat{z}^{U'}$ for each tree $U'\in \cU_\epsilon$.
Since $\Aut_u(U)\leq \ell!$, it follows that
\begin{align*}
&|\omega(U,\hat{\mathbf{z}})-\hat{z}^{U}|\leq \rho\;.\qedhere
\end{align*}
\end{proof}

The next lemma states that if $\mathbf{z}$ belongs to $D$ and  $Y^u(\mathbf{z})$ is close to $\tfrac{1}{2}$, then $\omega(T,\mathbf{z})$ is close to $e^{-|T|}$ for every $T$ with bounded size.
\begin{lemma}\label{lem:omega_controled}
 For every $\nu$ and every $\ell$, if $\rho\leq \rho_0(\nu,\ell)$, then for every $\epsilon$, every $\mathbf{z}\in D$ that satisfies $Y^u(\mathbf{z})> \frac{1}{2}-\rho$, and every $T\in \cT_{\leq \ell}$, we have
\begin{align}\label{eq:omega_controled}
 |\omega(T,\mathbf{z})-e^{-|T|}|<\nu\;.
\end{align}
\end{lemma}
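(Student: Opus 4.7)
The plan is to argue by compactness and contradiction, using as the pivotal input the uniqueness of the maximizer of $Y^u$ on $\overline{D}$, which is implicit in the optimization machinery of~\cite{CP15}. Suppose the conclusion fails for some fixed $\nu>0$ and $\ell\geq 1$: then for every $m\geq 1$ I can find a constant $\epsilon_m$, a point $\mathbf{z}^{(m)}\in D$ satisfying $Y^u(\mathbf{z}^{(m)})>1/2-1/m$, and a rooted tree $T_m\in\cT_{\leq\ell}$ such that $|\omega(T_m,\mathbf{z}^{(m)})-e^{-|T_m|}|\geq \nu$. Since $\cT_{\leq\ell}$ is finite, I pass to a subsequence along which $T_m=T_0$ is constant. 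The sequence $\epsilon_m$ either stabilizes to some $\epsilon_*>0$ or tends to $0$; for concreteness I handle the stabilizing case, the second case being identical modulo a diagonal extraction indexing the limit $\mathbf{z}^*$ on all of $\cU$.

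Next I carry out the compactness step. By~\cite[Lemma~13]{CP15} and the downward-closure of $D$, the coordinates of $\mathbf{z}^{(m)}$ are uniformly bounded in $m$, so a standard diagonal extraction produces a coordinatewise limit $\mathbf{z}^*\in[0,\infty)^{\cU_{\epsilon_*}}$. Because $|T_0|\leq\ell$, every $\cU_{\epsilon_*}$-admissible decomposition of $T_0$ has length at most $\ell$, so $\omega(T_0,\cdot)$ is a maximum over finitely many monomials of bounded total degree; in particular it is continuous, and passing to the limit yields $|\omega(T_0,\mathbf{z}^*)-e^{-|T_0|}|\geq\nu$. Applying Fatou's lemma to the non-negative series defining $Y^u$ gives $Y^u(\mathbf{z}^*)\geq 1/2$, while the global upper bound $Y^u\leq 1/2$ on $\overline{D}$ from~\cite[Lemma~14]{CP15} shows that $\mathbf{z}^*\in\overline{D}$ is an exact maximizer with $Y^u(\mathbf{z}^*)=1/2$.

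The final step is to use the characterization of maximizers to close the contradiction. The ``Poisson profile'' $z_*^U=e^{-|U|}$ is a maximizer: indeed, with this choice every admissible decomposition of $T$ has weight $\prod_i e^{-|U_i|}=e^{-|T|}$, so $\omega(T,\mathbf{z}_*)=e^{-|T|}$, and by Lemma~\ref{lemma:evalGF} combined with the identity $U(z)=T(z)-\tfrac{1}{2}T(z)^2$ for the EGF of unrooted labelled trees,
\begin{equation*}
Y^u(\mathbf{z}_*)=\sum_{U\in\cU}\frac{e^{-|U|}}{\Aut_u(U)}=U(e^{-1})=1-\tfrac{1}{2}=\tfrac{1}{2}.
\end{equation*}
I then argue that $\mathbf{z}_*$ is the \emph{unique} maximizer in $\overline{D}$, which forces $\omega(T_0,\mathbf{z}^*)=e^{-|T_0|}$ and contradicts the preserved gap of size $\nu$. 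I expect this uniqueness to be the main obstacle: only the value $1/2$ is identified explicitly in~\cite{CP15}, and promoting it to a uniqueness statement requires tracing the equality cases in the submultiplicative inequalities of the form $\omega(T\cup e\cup U,\mathbf{z})\leq \omega(T,\mathbf{z})\,\omega(U,\mathbf{z})$ underlying the proof of $Y^u\leq 1/2$, together with the strict analytic behaviour of $T(z)$ at its square-root singularity~\eqref{eq:evalT}.
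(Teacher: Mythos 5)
Your compactness-and-contradiction strategy is genuinely different from the paper's proof. The paper argues directly and quantitatively: it uses the dissymmetry relation $Y^e(\mathbf{z})=Y(\mathbf{z})-Y^u(\mathbf{z})$ together with the elementary inequality $y-\frac{1}{2}\leq y^2/2$ and the supermultiplicativity of $\omega$ under joining two rooted trees by an edge, deduces by induction on $|T|$ that $x^{|T|}\leq\omega(T,\mathbf{z})\leq x^{|T|}+\rho\,|T|!$ where $x:=z^{\bullet}$, and then sandwiches $x$ near $e^{-1}$ by evaluating $Y^u$ at the pure-power profiles $\mathbf{x}$ and $\mathbf{x}_\rho$ and invoking the square-root singularity of $T(z)$ from Lemma~\ref{lemma:evalGF}. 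In effect the paper carries out, in a quantitative form, precisely the equality analysis you hope to appeal to abstractly, which is why it never needs compactness and why its $\rho_0(\nu,\ell)$ is explicit.

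As written, your proposal has two genuine gaps. First, the Fatou step points the wrong way: for non-negative $f_m\to f$ pointwise, Fatou gives $\int f\leq\liminf_m\int f_m$, i.e. $Y^u(\mathbf{z}^*)\leq\liminf_m Y^u(\mathbf{z}^{(m)})$, since $\omega(T,\cdot)$ is continuous and $Y^u$ is only lower semi-continuous. This is consistent with, but does not imply, $Y^u(\mathbf{z}^*)\geq\frac{1}{2}$; a priori mass could escape to large trees as $m\to\infty$, and ruling this out needs a reverse-Fatou/domination estimate, which is exactly the sort of quantitative control the paper's argument supplies and your compactness step does not. Second, and as you yourself flag, the uniqueness of the maximizer of $Y^u$ on $\overline{D}$ is not stated in~\cite{CP15}, and it does not follow from the bare bound $Y^u\leq\frac{1}{2}$. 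Establishing it means tracing equality cases in the supermultiplicativity inequalities and in the singular expansion of $T(z)$, which essentially reproduces the paper's direct proof. Until both of these are filled, the contradiction does not close, so the compactness route is not a shortcut here.
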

\begin{proof}
Let $Y^e(\mathbf{z})$ be the partition function of trees rooted at an edge, where each tree is weighted by its maximal weight. As noted in~\cite{CP15}, a classical trick known as the dissymmetry theorem~\cite{quebecois} implies that \begin{align*}
Y^e(\mathbf{z})=Y(\mathbf{z})-Y^u(\mathbf{z}) \;.
\end{align*}
\noindent Together with the hypothesis of the lemma and the fact that $y-1/2\leq y^2/2$ for all $y\in \mathbb{R}$, this implies:
$$
Y^e(\mathbf{z})=Y(\mathbf{z})-Y^u(\mathbf{z})\leq Y(\mathbf{z})-1/2+\rho\leq \frac{1}{2}(Y(\mathbf{z}))^2+\rho\;,
$$
For every pair of vertex rooted trees $T_1,T_2\ \in \cT$, let $f(T_1,T_2)$ be the edge-rooted tree obtained by adding an edge (the root) connecting the roots of $T_1$ and $T_2$.
We have the following supermultiplicativity property:
$$
\omega(f(T_1,T_2),\mathbf{z}) - \omega(T_1,\mathbf{z}) \omega(T_2,\mathbf{z})\geq 0\;.
$$
Also observe that the number of automorphisms of $f(T_1,T_2)$ that fix the rooted edge (as an ordered edge!), is precisely $\Aut_r(T_1)\Aut_r(T_2)$. Thus, for any pair $R_1,R_2\ \in \cT$, we have
\begin{align}\label{eq:decomp_not_bad}
\rho\geq Y^e(\mathbf{z})-\frac{1}{2}(Y(\mathbf{z}))^2 &=\sum_{T_1,T_2\in \cT} \frac{
\omega(f(T_1,T_2),\mathbf{z}) - \omega(T_1,\mathbf{z})\omega(T_2,\mathbf{z})}{\Aut_r(T_1)\Aut_r(T_2)}\nonumber 
\\
&\geq \frac{ \omega(f(R_1,R_2),\mathbf{z}) - \omega(R_1,\mathbf{z})\omega(R_2,\mathbf{z})}{|R_1|! \; |R_2|!}\;.
\end{align}
Let $\bullet$ be the tree composed by a single vertex and define $x=x(\mathbf{z}):=\omega(\bullet,\mathbf{z})=z^\bullet \in\mathbb{R}_+$. Observe that since $\mathbf{z}\in D$, we have $x\leq 1$ (otherwise $Y(\mathbf{z})=\infty$ since $\omega(T,\mathbf{z})\geq x^{|T|}$). Using~\eqref{eq:decomp_not_bad} with $R_2=\bullet$, for every $T\in \cT$:
$$
\omega(f(T,\bullet),\mathbf{z}) \leq x \cdot \omega(T, \mathbf{z}) +\rho \cdot |T|!, 
$$
and induction on $|T|$ implies that for every $T\in\cT$ we have
$$
x^{|T|}\leq \omega(T,\mathbf{z})
\leq  x^{|T|}+ |T|!\rho
\leq \left(x+ (\rho  |T|!)^{\frac{1}{|T|}}\right)^{|T|}\; .
$$
Note that if $|T|\leq \ell$, then  $(\rho|T|!)^{\frac{1}{|T|}}\leq c(\ell) \rho^{\frac{1}{\ell}}$, for some $c(\ell)>0$.
Consider $\mathbf{x}=(x^{|U|})_{U\in \cU_\epsilon}$ and $\mathbf{x}_\rho=((x+c(\ell)\rho^{\frac{1}{\ell}} )^{|U|})_{U\in \cU_\epsilon}$. By the definition of $\mathbf{x}$, note that $\omega(T,\mathbf{x})= x^{|T|}$, therefore $\omega(T,\mathbf{x})\leq \omega(T,\mathbf{z})$ and since $\mathbf{z}\in D$, by Lemma 14 in~\cite{CP15} we have
$$
Y^u(\mathbf{x})\leq Y^u(\mathbf{z})\leq \frac{1}{2}\;.
$$
This implies $x\leq e^{-1}$ (otherwise $Y^u(\mathbf{x})$ would not converge).
Similarly $\omega(T,\mathbf{x_\rho})= (x+c(\ell)\rho^{\frac{1}{\ell}} )^{|T|}$, and using the hypothesis of the lemma we have
$$
\frac{1}{2}-\rho \leq Y^u(\mathbf{z})\leq Y^u(\mathbf{x}_\rho)\;.
$$
By Equation~\eqref{eq:evalT} in Lemma~\ref{lemma:evalGF},
 this implies that $x+c(\ell)\rho^{\frac{1}{\ell}} \geq e^{-1}-O(\sqrt{c(\ell)\rho^{1/\ell}})$. Given $\nu$ and $\ell$, we can now set $\rho_0(\nu,\ell)$ small enough such that for $\rho\leq\rho_0(\nu,\ell)$ one has $x> e^{-1}\left(1-y\right)$, with $y=\min\{\frac{\nu e^\ell}{\ell},1\}$, and $\rho\leq \frac{\nu}{\ell!}$. We then have, for every $T\in \cT_{\leq \ell}$,
$$
e^{-|T|}-\nu\leq  e^{-|T|}(1-y|T|) \leq e^{-|T|}\left(1-y\right)^{|T|}<  x^{|T|} \leq \omega(T,\mathbf{z}) \leq x^{|T|}+ \rho \cdot |T|! \leq e^{-|T|}+\nu\;.
$$
where we used that $(1-y)^{\ell}$ is convex for $y\in [0,1]$.
\end{proof}

Finally, we can prove estimates for the ratios between $|\cB^U_{n,[\alpha]^w}|$ and $|\cA_{n,[\alpha]^w_q}|$ for good boxes $[\alpha]^w$ and unrooted trees $U$ with bounded size.
\begin{prop}\label{lem:1}
For every $\vartheta$, every $\epsilon$ and every $\ell$ such that $\ell<1/\epsilon$, if $\gamma\leq \gamma_0(\vartheta,\ell)$, $k_*\geq k_0(\vartheta,\epsilon,\ell)$ and $n$ is large enough, then for every box $[\alpha]^w$ which is $(\gamma,\epsilon)$-good and every $U\in \cU_{\leq \ell}$
$$
\left|\frac{|\cB^U_{n,[\alpha]^w}|}{|\cA_{n,[\alpha]^w_q}|} -\frac{e^{-|U|}}{\Aut_u(U)}\right|<\vartheta\;.
$$
\end{prop}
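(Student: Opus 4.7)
\medskip
\noindent\textbf{Proof proposal.}
The plan is to combine Lemmas~\ref{lem:good_implies_Y_large} and~\ref{lem:omega_controled} through the triangle inequality, and then translate the resulting estimate on $\hat{z}^U$ back into an estimate on the ratio $|\cB^U_{n,[\alpha]^w}|/|\cA_{n,[\alpha]^w_q}|$ via the very definition~\eqref{eq:defzn} of $\mathbf{z}_{n,\alpha}$. Given $\vartheta$ and $\ell$, I would first choose $\nu$ small with respect to $\vartheta$ and $\ell!$ (since the ratio is divided by $\Aut_u(U)\leq \ell!$), then $\rho\leq\rho_0(\nu,\ell)$ as required by Lemma~\ref{lem:omega_controled}, then $\xi\leq \xi_0(\rho,\epsilon,\ell)$ and $\gamma\leq \gamma_0(\rho,\ell)$ as required by Lemma~\ref{lem:good_implies_Y_large}, then $k_*\geq k_0(\epsilon,\xi)$, and finally take $n$ large.

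Set $\hat{\mathbf{z}}:=\mathbf{z}_{n,\alpha}-\xi\mathbf{j}$. Lemma~\ref{lem:good_implies_Y_large}, applied to the good box $[\alpha]^w$, yields $\hat{\mathbf{z}}\in D$, $Y^u(\hat{\mathbf{z}})>\tfrac12-\rho$, and the approximation
\[
|\omega(U,\hat{\mathbf{z}})-\hat{z}^U|\leq \rho \qquad \text{for every } U\in\cU_{\leq\ell}.
\]
Now Lemma~\ref{lem:omega_controled} can be fed with this same vector $\hat{\mathbf{z}}$; its conclusion is stated for rooted trees in $\cT_{\leq\ell}$, but since a $\cU_\epsilon$-admissible decomposition depends only on the underlying unrooted structure, $\omega(U,\mathbf{z})$ for $U\in\cU_{\leq\ell}$ coincides with the value attached to any rooted version of $U$, and we obtain $|\omega(U,\hat{\mathbf{z}})-e^{-|U|}|<\nu$. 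Combining the two estimates via the triangle inequality gives
\[
|\hat{z}^U-e^{-|U|}|\leq \rho+\nu \qquad \text{for every } U\in\cU_{\leq\ell}.
\]

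It remains to unwind the definition of $z_{n,\alpha}^U$. Since $\hat{z}^U=z_{n,\alpha}^U-\xi$ and $z_{n,\alpha}^U=\Aut_u(U)\bigl(1-|U|/n\bigr)\,|\cB^U_{n,[\alpha]^w}|/|\cA_{n,[\alpha]^w_q}|$, we may write
\[
\frac{|\cB^U_{n,[\alpha]^w}|}{|\cA_{n,[\alpha]^w_q}|}=\frac{\hat{z}^U+\xi}{\Aut_u(U)\bigl(1-|U|/n\bigr)}=\frac{e^{-|U|}\pm(\rho+\nu+\xi)}{\Aut_u(U)\bigl(1-|U|/n\bigr)}.
\]
For $n$ large enough with respect to $\ell$ the correction factor $1-|U|/n$ lies in $[1-\ell/n,1]$, and elementary manipulations (using $e^{-|U|}\leq 1$ and $\Aut_u(U)\geq 1$) bound the discrepancy with $e^{-|U|}/\Aut_u(U)$ by $\vartheta$, once $\nu,\rho,\xi$ and $\ell/n$ have been chosen small enough compared to $\vartheta$.

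I do not anticipate a genuinely difficult step: the two preceding lemmas already package the optimization-theoretic content, and the remaining work is bookkeeping. The one point that needs minor care is the transfer of Lemma~\ref{lem:omega_controled} from rooted to unrooted trees, which is harmless because $\omega$ is root-independent; the other is the careful ordering of parameters so that every dependency ($\gamma_0(\rho,\ell)$, $\xi_0(\rho,\epsilon,\ell)$, $k_0(\epsilon,\xi)$, $n$ last) is respected.
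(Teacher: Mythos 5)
Your proposal is correct and follows essentially the same route as the paper: combine Lemma~\ref{lem:good_implies_Y_large} and Lemma~\ref{lem:omega_controled} via the triangle inequality to get $\hat{z}^U\approx e^{-|U|}$, then unwind the definition~\eqref{eq:defzn}, choosing the parameters in exactly the order $\nu\to\rho\to\xi,\gamma\to k_*\to n$. The only cosmetic difference is that the paper fixes the shares up front ($\nu=\vartheta/4$, $\rho,\xi\leq\vartheta/4$) and, in the last step, invokes the a priori bound $z_{n,\alpha}^U\leq 1$ (obtained by a double-counting argument analogous to~\eqref{eq:easyBound}) to control the $(1-|U|/n)^{-1}$ factor, whereas you deduce boundedness of $\hat z^U$ from the estimate $|\hat z^U-e^{-|U|}|\leq\rho+\nu$ itself — both are fine; your remark about $\omega$ being root-independent when passing from Lemma~\ref{lem:omega_controled} to unrooted trees is also a correct and worthwhile observation that the paper leaves implicit.
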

\begin{proof}
Let us first fix the constants that we will need in the proof.
For $\nu:=\vartheta/4$, we let $\rho_0=\rho_0(\nu,\ell)$ be the value obtained from Lemma~\ref{lem:omega_controled}.
For $\rho:=\min\{\rho_0, \nu\}$, we let $\gamma_0=\gamma_0(\rho,\ell)$, $\xi_0=\xi_0(\rho,\epsilon,\ell)$ be the values obtained from Lemma~\ref{lem:good_implies_Y_large}.
For $\xi:=\min\left\{\xi_0,\nu\right\}$, we let $k_0=k_0(\epsilon,\xi)(=k_0(\vartheta,\epsilon,\ell))$ be the value obtained from Lemma~\ref{lem:good_implies_Y_large}.
Now fix $k_*\geq k_0$ and consider $n$ large enough. Note that once $k_*$ and $n$ are chosen, the space $\cE_{k_*}$ is well-determined.

Let $\hat{\mathbf{z}}=\mathbf{z}_{n,\alpha}-\xi \mathbf{j}$
as before.
For a given $U\in \cU_{\leq \ell}$, we observe
$$
|z^U_{n,\alpha}- \hat{z}^U|\leq \xi \leq \vartheta/4\;.
$$
By Lemma~\ref{lem:good_implies_Y_large}, if $[\alpha]^w$ is $(\gamma,\epsilon)$-good, we have
$$
|\hat{z}^{U} - \omega(U,\hat{\mathbf{z}})|\leq \rho \leq \vartheta/4\;.
$$   
The same lemma also implies that $\hat{\mathbf{z}}\in D$ and that $Y^u(\hat{\mathbf{z}})>\frac{1}{2}-\rho$.
Thus, $\hat{\mathbf{z}}$ satisfies the hypothesis of Lemma~\ref{lem:omega_controled}, which implies
$$
|\omega(U,\hat{\mathbf{z}})-e^{-|U|}|<\nu= \vartheta/4\;.
$$
Using the previous three inequalities and~\eqref{eq:defzn}, we conclude
\begin{align*}
\left|\frac{|\cB^U_{n,[\alpha]^w}|}{|\cA_{n,[\alpha]^w_q}|} -\frac{e^{-|U|}}{\Aut_u(U)} \right| 
&=     \frac{|z^U_{n,\alpha}\left(1-\frac{|U|}{n}\right)^{-1}-e^{-|U|}| }{\Aut_u(U)}<\vartheta\;,
\end{align*}
provided that $n$ is large enough. In the last inequality we used that $z_{n,\alpha}^U\leq 1$ (this can be obtained using a similar argument as the one used to obtain~\eqref{eq:easyBound}).
\end{proof}

\begin{prop}\label{lem:2}
For every $\vartheta$, every $\epsilon$ and every $\ell$ such $\ell<1/\epsilon$, if $\gamma\leq \gamma_0(\vartheta,\ell)$, $k_*\geq k_0(\vartheta,\epsilon,\ell)$ and $n$ is large enough, then for every box $[\alpha]^w$ which is $(\gamma,\epsilon)$-good and every $T\in \cT_{\leq\ell}$
$$
\left|\frac{\alpha(T)}{n}-\frac{e^{-|T|}}{\Aut_r(T)}\right|<\vartheta\;.
$$
\end{prop}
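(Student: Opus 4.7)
The plan is a double-counting argument that reduces the statement to Proposition~\ref{lem:1}. I would count pairs $(G,v)$ with $G\in\cA_{n,[\alpha]^w_q}$ a tree and $v$ a pendant $T$-vertex of~$G$. Summing over $G$ gives $\sum_G \alpha^G(T)$; since $G$ lies in the $q$-neighbourhood of $[\alpha]^w$, we have $\alpha^G(T)=\alpha(T)+O(w+q)$, so this total equals $|\cA_{n,[\alpha]^w_q}|\cdot(\alpha(T)+O(w+q))$.

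For the other side, each pair $(G,v)$ corresponds to a triple $(H,v,u)$ obtained by removing the cut-edge $e=vu$ at $v$ that isolates its pendant $T$-copy: $H:=G\setminus\{e\}$ lies in $\cB^{T^u}_n$ (with $T^u$ the unrooted tree underlying $T$), the small component of $H$ is $T^u$ rooted at $v$, and $u$ belongs to the big component. Conversely, given $H\in\cB^{T^u}_n$, reattaching via an edge $vu$ yields a graph in which $v$ is a pendant $T$-vertex iff $v$ is a vertex of the small component such that rooting $T^u$ at $v$ gives $T$, and $u$ is any vertex of the big component; the orbit--stabilizer theorem gives $\Aut_u(T^u)/\Aut_r(T)$ choices for $v$ and $n-|T|$ choices for $u$. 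Writing $M$ for the number of $H$ admitting such a reattachment that lands in $\cA_{n,[\alpha]^w_q}$, this yields
$$\sum_G \alpha^G(T) \;=\; (n-|T|)\cdot\frac{\Aut_u(T^u)}{\Aut_r(T)}\cdot M.$$

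Removing a pendant $T$-copy only affects pendant $T'$-vertices within bounded distance of the modification site, hence $|\alpha^H(T')-\alpha^G(T')|\leq c_0$ for every $T'\in\cT_*$ and some constant $c_0=c_0(k_*,\ell)$, giving the sandwich $|\cB^{T^u}_{n,[\alpha]^w_{q-c_0}}|\leq M\leq|\cB^{T^u}_{n,[\alpha]^w_{q+c_0}}|$. Since $T^u\in\cU_{\leq\ell}$, Proposition~\ref{lem:1} yields $|\cB^{T^u}_{n,[\alpha]^w}|/|\cA_{n,[\alpha]^w_q}|=e^{-|T|}/\Aut_u(T^u)+O(\vartheta_0)$ for a suitably small $\vartheta_0$; combining with the previous identity, the two $\Aut_u(T^u)$ factors cancel and I obtain $\alpha(T)/n=(1-|T|/n)\cdot e^{-|T|}/\Aut_r(T)+O(\vartheta)$, which is the required bound since $|T|\leq\ell$ is bounded and $n$ is taken large.

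The main obstacle is the last step: passing from Proposition~\ref{lem:1}'s control on the single box $[\alpha]^w$ to the $c_0$-perturbed neighbourhoods $[\alpha]^w_{q\pm c_0}$ produced by the edge-addition operation. To handle this I would decompose the perturbed region as a union of the neighbouring boxes $[\beta_j]^w$ from the fixed collection, apply Proposition~\ref{lem:1} to those that are $(\gamma,\epsilon)$-good, and use Lemma~\ref{lem:bad_cells} together with the partition property of the $q$-neighbourhoods to absorb the contribution from non-good boxes into the final error~$\vartheta$.
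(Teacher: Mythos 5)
Your argument takes a genuinely different route from the paper's, and it has a gap at exactly the point where the paper brings in outside machinery. The paper does \emph{not} try to express $\alpha(T)/n$ by a symmetric double-counting. It only proves the \emph{lower} bound directly, via the one-sided inequality $\alpha(T)/n \geq \omega(T,\mathbf{z}_{n,\alpha})/\Aut_r(T) - C/n$ imported from Lemma~11 of \cite{CP15}, combined with Lemmas~\ref{lem:good_implies_Y_large} and~\ref{lem:omega_controled}, which force $\omega(T,\mathbf{z}_{n,\alpha})$ to be within $O(\vartheta)$ of $e^{-|T|}$ on good boxes. The \emph{upper} bound is then obtained for free: since $\sum_{T'\in\cT_{\leq\ell}}\alpha(T')/n\leq 1$ (each vertex is the root of at most one pendant tree) and the lower bounds already sum to within $O(\vartheta)$ of $1$, no single $\alpha(T)/n$ can exceed $a_\infty(T)+\vartheta$. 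This asymmetry between the two directions is deliberate: it avoids ever having to control how a detach/reattach operation perturbs the vector $\alpha^G$.

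Your double-counting runs into that perturbation head-on, in a way the box machinery cannot absorb. The displayed identity $\sum_G\alpha^G(T)=(n-|T|)\frac{\Aut_u(T^u)}{\Aut_r(T)}\,M$ is already false as an equation: for a fixed $H\in\cB^{T^u}_n$ the set of reattachments $(v,u)$ that land in $\cA_{n,[\alpha]^w_q}$ depends on the attachment point $u$ (different $u$ shift $\alpha^G$ differently), so the per-$H$ count is not the constant $(n-|T|)\Aut_u(T^u)/\Aut_r(T)$; you would need a weighted sum over $H$, not $M$ times a constant. More seriously, your own constant $c_0=c_0(k_*,\ell)$ is of order $k_*$, not $\ell$: detaching the pendant copy of $T$ at $v$ through the edge $vu$ can alter the pendant-tree label of every ancestor $v'$ of $u$ whose pendant subtree has order at most $k_*$, and there may be $\Theta(k_*)$ such vertices. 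In the hierarchy~\eqref{eq:inequalities} one has $k_*\gg 1/\epsilon = q$, so $c_0\gg q$: the lower endpoint $[\alpha]^w_{q-c_0}$ of your sandwich is vacuous (you lose the lower bound entirely), and the upper endpoint $[\alpha]^w_{q+c_0}$ is far wider than the $q$-neighbourhood controlled by Proposition~\ref{lem:1}. The proposed repair by decomposing $[\alpha]^w_{q+c_0}$ into neighbouring boxes $[\beta_j]^w$ does not close the gap: those boxes are $2q$-apart (only their $q$-neighbourhoods tile $\cE_{k_*}$) and so leave uncovered strips, and Lemma~\ref{lem:bad_cells} bounds the mass of bad boxes only relative to the global $|\cA_n|$, $|\cB_n|$, which is useless when $|\cA_{n,[\alpha]^w_q}|$ may be an arbitrarily small fraction of $|\cA_n|$. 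The conceptual reason the paper's route works and yours does not is that the weight $\omega(T,\cdot)$ encodes building the pendant $T$ by a sequence of attachments of trees from $\cU_\epsilon$, each of size at most $q=\lceil\epsilon^{-1}\rceil$, so that each step perturbs $\alpha^G$ by $O(q)$ and stays within the $q$-neighbourhood; attaching or detaching the whole of $T$ in one step, as you do, breaks that budget.
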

\begin{proof}

Again, let us start by fixing the constants that we will need in the proof. For $\nu:=\frac{\vartheta}{4|\cT_{\leq \ell}|}$, we let $\rho_0=\rho_0(\nu,\ell)$ be the value obtained from Lemma~\ref{lem:omega_controled}. For $\rho\leq \rho_0$, we let $\gamma_0=\gamma_0(\rho,\ell)(=\gamma_0(\vartheta,\ell))$, $\xi_0=\xi_0(\rho,\epsilon,\ell)$ be the values obtained from Lemma~\ref{lem:good_implies_Y_large}.

Observe that, if we fix $T\in \cT_{\leq \ell}$, the function $\omega(T,\mathbf{z})$ is a \emph{piecewise} polynomial in the set of variables $\{z^U:\,U\in \cU_\epsilon\}$ that it is continuous at every point of $(\mathbb{R}_+)^{\cU_\epsilon}$. Since $D$ is bounded, there exists a $\xi_1$ such that for every $\xi\leq \xi_1$ and every $\mathbf{z}$ at distance at most $1$ from $D$ (in the $\ell_\infty$ norm), we have
\begin{align*}
|\omega(T,\mathbf{z})-\omega(T,\mathbf{z}-\xi \mathbf{j})|<\frac{\vartheta}{4|\cT_{\leq \ell}|}\;.
\end{align*}
 
For $\xi:=\min\{\xi_0,\xi_1\}$, we let $k_0=k_0(\epsilon,\xi)(=k_0(\vartheta,\epsilon,\ell))$ be the value obtained from Lemma~\ref{lem:good_implies_Y_large}.
Fix $k_* \geq k_0$ and consider $n$ large enough.

By Lemma~\ref{lem:good_implies_Y_large}, 
if $[\alpha]^w$ is $(\gamma,\epsilon)$-good and we write  $\hat{\mathbf{z}}:=\mathbf{z}_{n,\alpha}-\xi \mathbf{j}$, we have $\hat{\mathbf{z}}\in D$ and
$Y^u(\hat{\mathbf{z}})>\frac{1}{2}-\rho$. Thus, $\hat{\mathbf{z}}$ satisfies the hypothesis of Lemma~\ref{lem:omega_controled} and we have
\begin{align*}
|\omega(T,\hat{\mathbf{z}})-e^{-|T|}|<\nu=\frac{\vartheta}{4|\cT_{\leq\ell}|}\;.
\end{align*}
Using the previous inequalities, we obtain
$$
|\omega(T,\mathbf{z}_{n,\alpha})-e^{-|T|}|\leq |\omega(T,\mathbf{z}_{n,\alpha})-\omega(T,\hat{\mathbf{z}})|+|\omega(T,\hat{\mathbf{z}})-e^{-|T|}|<\frac{\vartheta}{2|\cT_{\leq \ell}|} \;.
$$
By Lemma 11 in~\cite{CP15}, there exists a constant $C$ that does not depend on $n$ such that
\begin{align}\label{eq:LB}
\frac{\alpha(T)}{n} \geq \frac{\omega(T,\mathbf{z}_{n,\alpha})}{\Aut_r(T)} -\frac{C}{n}\geq  \frac{e^{-|T|}}{\Aut_r(T)}-\frac{2\vartheta}{3|\cT_{\leq\ell}|} \;,
\end{align}
where the last inequality holds provided $n$ is large enough. This proves one side of the inequality in the statement.

By Lemma~\ref{lemma:evalGF},
 if we let $t$ be large enough with respect to $\vartheta$, we have that
\begin{align}\label{eq:almost one}
\sum_{T\in\cT_{\leq t}}\frac{e^{-|T|}}{\Aut_r(T)} > 1-\frac{\vartheta}{3} .
\end{align}
We can assume that $\ell\geq t$, up to increasing the value of $k_*$ and $n$.

For the sake of contradiction, suppose that there exists $T_0\in \cT_{\le \ell}$ such that $\frac{\alpha(T_0)}{n} > \frac{e^{-|T_0|}}{\Aut_r(T_0)}+\vartheta$. Then, using~\eqref{eq:LB},~\eqref{eq:almost one} and the properties of $T_0$, we get
$$
1\geq \sum_{T\in \cT_{\leq\ell}}\frac{\alpha(T)}{n}\geq \sum_{T\in\cT_{\leq\ell}}\frac{e^{-|T|}}{\Aut_r(T)} -\frac{2\vartheta}{3}+\vartheta > 1\;,
$$
thus obtaining a contradiction and concluding the proof of the lemma.
\end{proof}

\subsection{Proof of Theorem~\ref{thm:main} for classes of forests: the case of $1$ or $2$ connected components}\label{ssc:k1}

For every $\delta$ and every $\ell$, consider the set of vectors in $\cE_\ell$ that are $\delta$-close from the distribution $a_\infty$ (recall that for $T\in \cT$, $a_\infty(T)=\frac{e^{-|T|}}{\Aut_r(T)}$); that is,
\begin{align}\label{def:Xi}
\Xi(\delta,\ell)&=\left\{\beta\in \cE_\ell:\, \left|\frac{\beta(T)}{n}-a_\infty(T)\right|<\delta,\text{ for every }T\in \cT_{\leq \ell}\right\}\;.
\end{align}

\noindent In what follows, for every set of graphs $\mathcal{S}_n$, every $\ell\geq 1$ and every $\beta\in \cE_\ell$, we let $\mathcal{S}_{n,\beta}$ be the set of graphs $G$ in $\mathcal{S}_n$ such that $\alpha^G(T) = \beta(T)$ for all $T\in\cT_{\leq \ell}$.
\begin{prop}\label{prop:k1}
For every $\theta_1$ and every $U \in \mathcal{U}$, there exists a $\zeta$ such that for every $\zeta$-tight class $\cG$ of forests and every large enough $n$, one has
\begin{align*}
 \left| \frac{ \left| \cB_n^{U} \right|}{|\cG_n|}- e^{-1/2}\frac{e^{-|U|}}{\Aut_u(U)}\right|< \theta_1\;.
\end{align*}
Moreover, for every $\theta_1$, every $\delta$, every $\ell$ and every $U \in \mathcal{U}$, there exists a $\zeta$ such that for every $\zeta$-tight class $\cG$ of forests and every large enough $n$, one has
\begin{align*}
\left| \frac{\sum_{\beta\in \Xi(\delta, \ell)}\left| \cB_{n,\beta}^{U} \right|}{|\cB^U_n|} -1\right|<\theta_1\;.
\end{align*}
\end{prop}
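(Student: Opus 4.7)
The plan is to combine the local estimates from Propositions~\ref{lem:1} and~\ref{lem:2} (which control good boxes) with Lemma~\ref{lem:bad_cells} (which says that good boxes capture almost all of $\cA_n$ and $\cB_n$) and Lemma~\ref{lem:tight} (which gives $|\cA_n|/|\cG_n|\to e^{-1/2}$). I would fix $\theta_1$, $U$, and for Part~2 also $\delta$ and $\ell$, then introduce the auxiliary parameters in the order $\eta,\vartheta,\gamma,\epsilon,k_*,\zeta$, each chosen small (resp.\ large) enough in terms of the preceding ones to trigger the hypotheses of the relevant lemmas. In particular, taking $\epsilon\leq 1/|U|$ makes $U\in\cU_\epsilon$, so that~\eqref{eq:P1} applies to this $U$, and taking $k_*\geq\ell$ makes $\cT_{\leq\ell}\subseteq\cT_*$ so that Proposition~\ref{lem:2} controls all the trees we need.

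For Part~1, I would decompose
\[
\sum_{i=1}^K |\cB_{n,[\beta_i]^w}^U| \;=\; \sum_{i\in\Good_{\gamma,\epsilon}} |\cB_{n,[\beta_i]^w}^U| \;+\; \sum_{i\notin\Good_{\gamma,\epsilon}} |\cB_{n,[\beta_i]^w}^U|.
\]
Proposition~\ref{lem:1} evaluates the first term as $\bigl(\tfrac{e^{-|U|}}{\Aut_u(U)}+O(\vartheta)\bigr)\sum_{i\in\Good}|\cA_{n,[\beta_i]^w_q}|$, and Lemma~\ref{lem:bad_cells} bounds the second term by $\eta|\cB_n|$. Since the $q$-neighborhoods partition $\cE_{k_*}$ (see~\eqref{eq:Ppartition}), another application of Lemma~\ref{lem:bad_cells} yields $\sum_{i\in\Good}|\cA_{n,[\beta_i]^w_q}|=(1+O(\eta))|\cA_n|$, while~\eqref{eq:P1} gives $\sum_i|\cB_{n,[\beta_i]^w}^U|=(1+O(\epsilon))|\cB_n^U|$. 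Combining these with Lemma~\ref{lem:tight} (iterated as in Theorem~\ref{thm:components}) to pass from $|\cA_n|$ to $|\cG_n|$ concludes Part~1.

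For Part~2, I use the analogous three-way decomposition of $|\cB_n^U|$ into graphs in good boxes, bad boxes, and graphs outside any $[\beta_i]^w$. Any $G$ lying in a good box $[\beta_i]^w$ satisfies $|\alpha^G(T)-\beta_i(T)|<w$ for every $T\in\cT_*$, and Proposition~\ref{lem:2} gives $|\beta_i(T)/n-a_\infty(T)|<\vartheta$ for every $T\in\cT_{\leq\ell}$. Choosing $\vartheta<\delta/2$ and $n$ so large that $w/n<\delta/2$, the triangle inequality yields $|\alpha^G(T)/n-a_\infty(T)|<\delta$ for every $T\in\cT_{\leq\ell}$; hence the $\cT_{\leq\ell}$-projection of $G$ lies in $\Xi(\delta,\ell)$, and $G$ is already counted in $\sum_{\beta\in\Xi(\delta,\ell)}|\cB_{n,\beta}^U|$. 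The remaining bad-box and outside-box contributions are at most $\eta|\cB_n|$ (by Lemma~\ref{lem:bad_cells}) and $\epsilon|\cB_n^U|$ (by~\eqref{eq:P1}). To make these small relative to $|\cB_n^U|$, I invoke Part~1 to obtain $|\cB_n^U|\geq c_U|\cG_n|$ for some $c_U>0$ depending only on $U$ and large $n$; together with $|\cB_n|\leq|\cG_n|$ this bounds $|\cB_n|/|\cB_n^U|$ by a constant depending on $U$, and choosing $\eta,\epsilon$ sufficiently small (in terms of $\theta_1$ and $U$) finishes the proof.

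The argument is essentially bookkeeping: all the hard analysis has been done in the preceding subsections. The main obstacle, and hence the place requiring care, is the parameter sequencing — in Part~2 one must simultaneously invoke Part~1 (with all its own dependencies) and the local lemmas of Section~\ref{subsec:stability}, and the value of $\zeta$ must be small enough for \emph{both} chains of implications to hold.
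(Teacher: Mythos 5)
Your proposal follows the same route as the paper: partition via \eqref{eq:Ppartition}/\eqref{eq:P1}, split into good and bad boxes, apply Proposition~\ref{lem:1} (resp.\ Proposition~\ref{lem:2}) on good boxes, bound the bad-box and out-of-box mass by Lemma~\ref{lem:bad_cells} and \eqref{eq:P1}, and relate $|\cA_n|$ to $|\cG_n|$ using the tightness hypothesis; Part~2 then uses Part~1 to lower-bound $|\cB_n^U|/|\cB_n|$ exactly as the paper does. The only (inessential) differences are that the paper phrases Part~2 as a proof by contradiction while you argue directly, and the paper obtains $|\cA_n|/|\cG_n|=(1\pm O(\zeta))e^{-1/2}$ straight from the $\zeta$-tight definition plus Theorem~\ref{thm:conj} rather than by iterating Lemma~\ref{lem:tight}.
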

\begin{proof}
We start by fixing the constants needed in the proof. For $\vartheta:=\theta_1/8$ and $\ell=|U|$, we let $\gamma_0=\gamma_0(\vartheta,\ell)$ be the constant obtained from Proposition~\ref{lem:1}. Fix $\gamma\leq \gamma_0$.
For $\eta:=\frac{\theta_1}{4}$, we let $\epsilon_0=\epsilon_0(\gamma,\eta)$ be the constant obtained from Lemma~\ref{lem:bad_cells}. For $\epsilon:=\min\{\epsilon_0,1/\ell,\theta_1/8\}$, we let $k_0(\vartheta,\epsilon,\ell)$ be the maximum of the constants obtained from Lemma~\ref{lem:bad_cells} and Proposition~\ref{lem:1}.  Fix $k_*\geq k_0$. Let $\zeta$ be the minimum between the constant obtained from Lemma~\ref{lem:bad_cells} and $\theta_1/8$. Let $n$ be large enough with respect to all the previous parameters.

Now that $\epsilon$ and $k_*$ are fixed, we consider as before
the family  $\cU_{\epsilon}\subset \cU$ of unrooted trees of order at most $\lceil \epsilon^{-1}\rceil$ and the family $\cT_*\subset \cT$ of all rooted trees of order at most $k_*$. 
We also let $w$ and $K$, and the collection of boxes $\{[\beta_i]^w, 1\leq i\leq K\}$ be defined (relatively to the values of $\epsilon$ and $k_*$) as in Section~\ref{subsec:GoodAndBadBoxes}.
 We recall that these boxes satisfy~\eqref{eq:P1}, and using~\eqref{eq:Ppartition} we note that $\sum_{i=1}^K  |\mathcal{A}_{n, [\beta_i]^w_q}| = |\mathcal{A}_n|$.

We can write,
 \begin{align*}
\left| \frac{|\cB^U_n|}{|\cG_n|}- e^{-1/2}\frac{e^{-|U|}}{\Aut_u(U)}\right| 
 &\leq \left| \sum_{i=1}^K\frac{|\cB_{n,[\beta_i]^w}^U|}{|\cG_n|}- e^{-1/2}\frac{e^{-|U|}}{\Aut_u(U)}\right| +\epsilon\\
 &\leq \left| \sum_{i\notin \Good_{\gamma,\epsilon}}\frac{|\cB_{n,[\beta_i]^w}^U|}{|\cG_n|}+ \frac{1}{|\cG_n|}\sum_{i\in \Good_{\gamma,\epsilon}}|\cB_{n,[\beta_i]^w}^U| - e^{-1/2}\frac{e^{-|U|}}{\Aut_u(U)}\right|+\frac{\theta_1}{8}\;. 
  \end{align*}
By Proposition~\ref{lem:1}, for every $i\in \Good_{\gamma,\epsilon}$ and every $U\in \cU_{\leq \ell}$, we have
$$
\left||\cB_{n,[\beta_i]^w}^U|-\frac{e^{-|U|}}{\Aut_u(U)}\cdot |\cA_{n,[\beta_i]^w_q}|\right|\leq \frac{\theta_1}{8}\;.
$$
By Lemma~\ref{lem:bad_cells}, we have
 $$
 \sum_{i\notin \Good_{\gamma,\epsilon}}\frac{|\cB_{n,[\beta_i]^w}^U|}{|\cG_n|}\leq  \sum_{i\notin \Good_{\gamma,\epsilon}}\frac{|\cB_{n,[\beta_i]^w}|}{|\cB_n|}\leq  \eta = \frac{\theta_1}{4}\;.
 $$
Let $M$ be the number of boxes $[\beta_i]$ that are non-empty. Clearly, $M\leq |\cG_n|$. Therefore,
 \begin{align*}
&\left| \frac{|\cB^U_n|}{|\cG_n|}- e^{-1/2}\frac{e^{-|U|}}{\Aut_u(U)}\right|\\
&\;\;\; \leq \frac{\theta_1}{4} + \frac{\theta_1 M}{8|\cG_n|}+ \left| \frac{e^{-|U|}}{\Aut_u(U)|\cG_n|}\left(\sum_{i\in \Good_{\gamma,\epsilon}}|\cA_{n,[\beta_i]^w_q}|\right)- e^{-1/2}\frac{e^{-|U|}}{\Aut_u(U)}\right|  +\frac{\theta_1}{8}\\
&\;\;\; \leq \frac{\theta_1}{2}+\left| \frac{|\cA_n|}{|\cG_n|}\left(\frac{\sum_{i\in \Good_{\gamma,\epsilon}}|\cA_{n,[\beta_i]^w_q}|}{|\cA_n|}\right)- e^{-1/2}\right|\frac{e^{-|U|}}{\Aut_u(U)}
 \end{align*}
Again, by Lemma~\ref{lem:bad_cells} and using that $\sum_{i=1}^K  |\mathcal{A}_{n, [\beta_i]^w_q}| = |\mathcal{A}_n|$, we have
$$
\left|\frac{\sum_{i\in \Good_{\gamma,\epsilon}}|\cA_{n,[\beta_i]^w_q}|}{|\cA_n|} -1 \right| =\frac{\sum_{i\notin \Good_{\gamma,\epsilon}}|\cA_{n,[\beta_i]^w_q}|}{|\cA_n|}\leq \eta = \frac{\theta_1}{4}\;.
$$ 
Since $\cG$ is a $\zeta$-tight bridge-addable class, by definition, using Theorem~\ref{thm:conj} and provided that $n$ is large enough, we obtain
\begin{align*}
 (1-\zeta)e^{-1/2}\leq \frac{|\cA_n|}{|\cG_n|}\leq  (1+\zeta)e^{-1/2}\;.
\end{align*}
Since $\zeta\leq \theta_1/8$, we obtain 
 \begin{align*}
 \left| \frac{|\cB^U_n|}{|\cG_n|}- e^{-1/2}\frac{e^{-|U|}}{\Aut_u(U)}\right|
 & \leq \frac{\theta_1}{2}+\left(\left(1+\frac{\theta_1}{8}\right)\left(1+\frac{\theta_1}{4}\right) - 1\right)  e^{-1/2}\frac{e^{-|U|}}{\Aut_u(U)}\\
& \leq \frac{\theta_1}{2}+\frac{\theta_1}{2} \cdot e^{-1/2}\frac{e^{-|U|}}{\Aut_u(U)}\leq \theta_1\;.
 \end{align*}
 This concludes the proof of the first part of the proposition.
\medskip

For the second part, let us proceed by contradiction.  
Suppose that there exist $\theta$, $\delta$, $\ell$ and $U\in \mathcal{U}$, such that for every $\zeta$ there exists a $\zeta$-tight class $\cG$ and a large enough $n$ with
$$
\left| \frac{\sum_{\beta\in \Xi(\delta,\ell)}\left| \cB_{n,\beta}^{U} \right|}{|\cB^U_n|} -1\right|>\theta\;.
$$
or equivalently,
\begin{align}\label{eq:contra}
\frac{\sum_{\beta\notin \Xi(\delta, \ell)}\left| \cB_{n,\beta}^{U} \right|}{|\cB^U_n|}>\theta\;.
\end{align}

Note that by the first part of the proposition with $\theta_1$ small enough, we have that $\frac{|\cB^U_n|}{|\cG_n|}$ is arbitrarily close to $ e^{-1/2}\frac{e^{-|U|}}{\Aut_u(U)}$, for $\zeta$ small and $n$ large enough. Thus, there exists a uniform constant $c(U)>0$ such that $\frac{|\cB^{U}_n|}{|\cB_n|}\geq c(U)$, and~\eqref{eq:contra} is well-defined.

Let $\eta=\theta c(U)$ and let $\vartheta=\delta/2$. As in the first part of the proposition, we can choose $\gamma$, $\epsilon$, $k_*$, $\zeta$ and $n$, such that Lemma~\ref{lem:bad_cells} and Proposition~\ref{lem:2} can be applied. We skip the details of this setting.
We will again consider the set of boxes $\{[\beta_i]:\,1\leq i\leq K\}$ of $\cE_{k_*}$ fixed in Section~\ref{subsec:GoodAndBadBoxes}. For every $\alpha\in \cE_{k_*}$, we consider its canonical projection $\pi(\alpha)$ onto $\cE_\ell$ obtained by selecting the first $|\cE_\ell|$ coordinates of $\alpha$.

\begin{cla}
Let $\alpha\in [\beta_i]^w$, for some $i\in \Good(\gamma,\epsilon)$. Then $\pi(\alpha) \in \Xi(\delta, \ell)$.
\end{cla}
\begin{proof}[Proof of the Claim]
By Proposition~\ref{lem:2} and since $[\beta_i]$ is $(\gamma,\epsilon)$-good, for every $T\in \cT_{\leq \ell}$ we have 
$$
\left|\frac{\beta_i(T)}{n}-\frac{e^{-|T|}}{\Aut_r(T)}\right|< \vartheta\;.
$$
Since $\alpha\in [\beta_i]^w$, for every $T\in \cT_{\leq k_*}$, we have $|\beta_i(T)-\alpha(T)|\leq w$. The choice of $w$ does not depend on $n$, and thus, $|\frac{\beta_i(T)}{n}-\frac{\alpha(T)}{n}|\leq \frac{\delta}{3}$, if $n$ large enough.
Since $\ell\leq k_*$, for every $T\in \cT_{\leq \ell}$ we have 
$$
\left|\frac{\alpha(T)}{n}-\frac{e^{-|T|}}{\Aut_r(T)}\right|< \vartheta+\frac{\delta}{3}<\delta\;.
$$
We conclude that $\pi(\alpha)\in \Xi(\delta,\ell)$, which proves the claim.
\end{proof}

As a direct corollary of the claim, we get
\begin{align*}
\frac{\sum_{\beta\notin \Xi(\delta,\ell)}\left| \cB_{n,\beta}^{U} \right|}{|\cB^U_n|}
&\leq \frac{\sum_{i\notin \Good_{\gamma,\epsilon}}\left| \cB_{n,[\beta_i]^w}^{U} \right|}{|\cB^U_n|}\;.
\end{align*}
By Lemma~\ref{lem:bad_cells}, it follows that 
\begin{align*}
\frac{\sum_{\beta\notin \Xi(\delta,\ell)}\left| \cB_{n,\beta}^{U} \right|}{|\cB^U_n|}
&\leq \frac{|\cB_n|}{|\cB^U_n|}\cdot\frac{\sum_{i\notin \Good_{\gamma,\epsilon}}\left| \cB_{n,[\beta_i]^w}^{U} \right|}{|\cB_n|}
\leq \frac{|\cB_n|}{|\cB^U_n|}\cdot \eta
\leq \theta\;,
\end{align*}
where we have used $|\cB_{n,[\beta_i]^w}^{U}|\leq |\cB_{n,[\beta_i]^w}|$, giving a contradiction with~\eqref{eq:contra}.

\end{proof}

\subsection{Proof of Theorem~\ref{thm:main} for classes of forests}\label{ssc:k}

We now prove the main result of this section, Theorem~\ref{thm:main_forests}, that is equivalent to our main theorem for bridge-addable classes of forests.

Here we need to introduce the notion of \emph{removable edges}.
This notion will also be crucial in the next section in order to transfer the  obtained result from classes of forests to general graph classes.
We say that an edge $e$ in a graph $G\in\cG$ is \emph{removable} if the graph $G'=G\setminus e$ is in $\cG$.
For a subclass $\cH\subseteq \cG$ and a rooted tree $T\in \cT$, we define $p(\cH, T)$ to be the probability that given a uniform random graph $H\in\cH$, and a uniform random pendant copy of $T$ in $H$, the graph $H'$ obtained by deleting the edge that connects the pendant copy of $T$ to the rest of the graph belongs to $\cG$ (and not only to $\cH$). In other words, $p(\cH, T)$ is the average over all graphs in $\cH$ of the proportion of pendant copies of $T$ that are attached using a removable edge. This notion is inspired by bridge-alterable classes, for which $p(\cH,T)=1$, for every $\cH\subseteq \cG$ and every $T\in \cT$~\cite{ABMCR,KP}.
We do an slight abuse of notation by writing $p(G,T)$ for $p(\{G\},T)$, for each $G\in \cG$. Also, in the cases where $p(G,T)$ is not well-defined (that is, if 
$G$ has no pendant copy of $T$), we interpret the probability as~$1$.

Recall the definition of $\Xi(\delta,\ell)$ given in~\eqref{def:Xi}, and recall from Section~\ref{subsec:partition} that we use the notation $\{U_1,U_2,\dots,U_k\}$ to denote the forest formed by a \emph{multiset} of $k$ unrooted trees.
\begin{thm}\label{thm:main_forests}
For every $k\geq 1$, every $\theta_k$ and every $U_1,\dots, U_k \in \mathcal{U}$, there exists a $\zeta$ such that for every $\zeta$-tight class $\cG$ of forests and every  large enough $n$, one has
\begin{align}\label{eq:main_forest}
 \left| \frac{ \left| \cG_n^{k+1,\{U_1,\dots,U_k\}} \right|}{|\cG_n|}- e^{-1/2}\frac{e^{-\sum_{i=1}^k |U_i|}}{\Aut_u(U_1,\dots,U_k)}\right|< \theta_k\;.
\end{align}
Moreover, for every $k$, every $\ell$, every $\theta_k$, every $\delta$ and every $U_1,\dots, U_k \in \mathcal{U}$, there exists a $\zeta$ such that for every $\zeta$-tight class $\cG$ of forests and every large enough $n$, one has
\begin{align}\label{eq:main_forest2}
\left| \frac{\sum_{\beta\in \Xi(\delta,\ell)}\left| \cG_{n,\beta}^{k+1,\{U_1,\dots,U_k\}} \right|}{\left|\cG_{n}^{k+1,\{U_1,\dots,U_k\}}\right|} -1\right|<\theta_k\;.
\end{align}
\end{thm}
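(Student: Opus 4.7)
The plan is to proceed by induction on $k$, with the base case $k=1$ given by Proposition~\ref{prop:k1}. For the inductive step, fix unrooted trees $U_1,\dots,U_k$, write $\mathbf{f}=\{U_1,\dots,U_k\}$ and $\mathbf{f}'=\{U_1,\dots,U_{k-1}\}$, let $\mu$ denote the multiplicity of $U_k$ in $\mathbf{f}$, and set $m=n-\sum_i|U_i|$. By bridge-addability, for every triple $(G,C,e)$ with $G\in\cG_n^{k+1,\mathbf{f}}$, $C$ one of the $\mu$ small components of $G$ isomorphic to $U_k$, and $e$ an edge from $C$ to the big component of~$G$, the graph $F':=G+e$ lies in $\cG_n^{k,\mathbf{f}'}$ and carries a distinguished pendant copy of $U_k$ in its big component (namely $C$). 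This map is injective, yielding the key local inequality
\[
\mu\cdot|U_k|\cdot m\cdot|\cG_n^{k+1,\mathbf{f}}|\;\leq\;\sum_{F'\in\cG_n^{k,\mathbf{f}'}}\mathrm{pend}_{\mathrm{big}}(F',U_k),
\]
where $\mathrm{pend}_{\mathrm{big}}(F',U_k)$ denotes the number of pendant copies of $U_k$ attached to the big component of~$F'$. This is only an inequality because the reverse operation requires a \emph{removable} edge, something bridge-addability does not automatically provide.

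I would then estimate the right-hand side using the inductive hypothesis \eqref{eq:main_forest2} at level $k-1$, applied with $\ell\geq|U_k|$ and $\delta,\theta_{k-1}$ small. It guarantees that all but a $\theta_{k-1}$-fraction of $F'\in\cG_n^{k,\mathbf{f}'}$ satisfy $\alpha^{F'}\in\Xi(\delta,\ell)$. Expanding $\mathrm{pend}_{\mathrm{big}}(F',U_k)=\sum_T\alpha^{F',\mathrm{big}}(T)$ over the distinct rootings $T$ of~$U_k$ (noting that $\alpha^{F',\mathrm{big}}(T)$ differs from $\alpha^{F'}(T)$ by an additive constant depending on $\mathbf{f}'$), combined with the standard identity $\sum_T 1/\Aut_r(T)=|U_k|/\Aut_u(U_k)$, gives the typical estimate $\mathrm{pend}_{\mathrm{big}}(F',U_k)\leq(1+O(\delta))\,n\,|U_k|\,e^{-|U_k|}/\Aut_u(U_k)$; the atypical $F'$ contribute at most $n\cdot\theta_{k-1}|\cG_n^{k,\mathbf{f}'}|$. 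Plugging back into the double-counting, dividing by $|\cG_n|$, invoking \eqref{eq:main_forest} at level $k-1$, and using the multiplicative identity $\Aut_u(\mathbf{f})=\mu\cdot\Aut_u(U_k)\cdot\Aut_u(\mathbf{f}')$, one obtains the one-sided bound
\[
\frac{|\cG_n^{k+1,\mathbf{f}}|}{|\cG_n|}\;\leq\;(1+O(\delta+\theta_{k-1}))\,p_\infty(\mathbf{f}).
\]

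For the matching lower bound, Theorem~\ref{thm:components} (a consequence of Lemma~\ref{lem:tight}) provides $|\cG_n^{(k+1)}|/|\cG_n|\geq(1-\zeta')e^{-1/2}(1/2)^k/k!$, which equals $(1-\zeta')\sum_{\mathbf{f}''}p_\infty(\mathbf{f}'')$ summed over multisets $\mathbf{f}''$ of $k$ unrooted trees. Applying the universal upper bound from the previous step to \emph{every} such $\mathbf{f}''$ and noting that the resulting nonnegative deviations $p_\infty(\mathbf{f}'')(1+O(\delta+\theta_{k-1}))-|\cG_n^{k+1,\mathbf{f}''}|/|\cG_n|$ sum to $O(\zeta'+\delta+\theta_{k-1})$, the averaging principle then yields
\[
\frac{|\cG_n^{k+1,\mathbf{f}}|}{|\cG_n|}\;\geq\; p_\infty(\mathbf{f})-O(\delta+\theta_{k-1}+\zeta'),
\]
establishing \eqref{eq:main_forest} for~$k$. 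A technical subtlety is that the upper bound must be applied to \emph{every} multiset; I would handle this by truncating at $\max|U_i''|\leq L$, controlling the tail via a $k$-component analogue of inequality \eqref{eq:ub_largeU} from~\cite{CP15}, and using the exponential decay of $p_\infty$ to take $L$ large.

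Finally, \eqref{eq:main_forest2} for $k$ follows from the observation that if $G\in\cG_n^{k+1,\mathbf{f}}$ satisfies $\alpha^G\notin\Xi(\delta,\ell)$ and we attach any small component of $G$ to its big component to form $F'\in\cG_n^{k,\mathbf{f}'}$, then $\|\alpha^{F'}-\alpha^G\|_\infty=O(1)\ll\delta n$, so $\alpha^{F'}\notin\Xi(\delta/2,\ell)$. The inductive hypothesis \eqref{eq:main_forest2} for $k-1$ bounds the number of such atypical $F'$; feeding this through the double-counting of the first step yields the required bound on atypical~$G$. The central obstacle throughout is the asymmetry of the bridge-addable double-counting: only an upper bound on $|\cG_n^{k+1,\mathbf{f}}|$ is directly available, and matching pointwise \emph{lower} bounds are extracted indirectly from the global tightness estimate via the averaging step. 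This makes both the truncation argument for large multisets and the clean algebraic identity $\Aut_u(\mathbf{f})=\mu\cdot\Aut_u(U_k)\cdot\Aut_u(\mathbf{f}')$ that aligns constants with those appearing in $p_\infty$ essential to the proof.
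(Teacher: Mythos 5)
Your proposal is correct, but it takes a genuinely different route from the paper for the first part~\eqref{eq:main_forest}. The paper conditions on the vertex sets $V_1,\dots,V_{k-1}$ of $k-1$ of the small components, forming bridge-addable subclasses $\cH(V_1,\dots,V_{k-1})$ on a smaller vertex set; after a short averaging claim showing that most graphs lie in nearly-tight subclasses, it applies the two-sided estimate of Proposition~\ref{prop:k1} directly to each $\cH$ and sums. This gives matching upper and lower bounds simultaneously and never requires summing over infinitely many multisets. You instead work at the level of $\cG_n$ itself: the injective map $(G,C,e)\mapsto F'=G+e$ gives only a one-sided upper bound (since the inverse needs a \emph{removable} edge), and you recover the matching lower bound by averaging the upper bound over all $k$-tree multisets $\mathbf f''$ against the tightness estimate $|\cG_n^{(k+1)}|/|\cG_n|\geq(1-\zeta')e^{-1/2}2^{-k}/k!$. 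This is a valid and in fact rather elegant alternative, but the truncation step you flag is genuinely needed and nontrivial: you must bound $\sum_{\max|U_i''|>L}|\cG_n^{k+1,\mathbf f''}|/|\cG_n|$, which the paper's route entirely avoids. (A double-count of pairs $(G,e)$ with $e$ between components shows this tail is $O((k+1)/L)\cdot|\cG_n^{(k)}|/|\cG_n|$, which suffices, so the gap is fillable.) Your induction also has a different structure: you prove~\eqref{eq:main_forest} at level $k$ using~\eqref{eq:main_forest2} at level $k-1$, whereas the paper proves~\eqref{eq:main_forest} for all $k$ first, needing only the first half of Proposition~\ref{prop:k1} in the inductive step. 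For the second part~\eqref{eq:main_forest2}, your argument via the same double-counting and the stability of $\Xi(\delta,\ell)$ under adding a single bridge is essentially the paper's argument. Overall your approach trades the paper's subclass machinery for a truncation argument; both work, and yours is arguably more self-contained once the tail estimate is written out.
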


\noindent{\bf Note:} In the following proof and in the rest of the paper, we will often partition the set $\cG_n$ of graphs in $\cG$ having $n$ vertices into smaller graph classes. If $\mathcal{H}\subset \cG_n$ is a part in this partition, and $\zeta>0$ is a real number, we will say that $\mathcal{H}$ is $\zeta$-tight if
\begin{align*}
\Pr \left( H \mbox{ is connected} \right)\leq (1+\zeta)e^{-1/2}\;,
\end{align*}
where $H$ is uniform in $\mathcal{H}$. In other words, we slightly adapt the notion of $\zeta$-tightness for classes of graphs which from the context have a fixed number of vertices. 

\begin{proof}[Proof of Theorem~\ref{thm:main_forests}]
We will start by showing the first part of the theorem by induction on $k$, and then prove the second part, again by induction on $k$ and using the result obtained in the first part. The proof of the first part will also require the proof of an intermediate claim.
\medskip

{\noindent\bf Proof of the first part.}
Fix $U_1,\dots, U_k\in \cU$ and let $u$ be an upper bound on their sizes.

We are going to prove the first statement by induction. Observe that Proposition~\ref{prop:k1} proves the case $k=1$. Assume that the statement is true for $k-1$ and let us show it for $k$.

We consider the following total order on the subsets of $[n]$; for every $V_1,V_2\subseteq V$ we have $V_1<V_2$ if $|V_1|<|V_2|$ or $|V_1|=|V_2|$ and the elements of $V_1$ are smaller in the lexicographical order, than the ones in $V_2$.

Let $m(U_1,\dots,U_k)$ be the number of graphs isomorphic to $U_k$ among $U_1,\dots,U_k$. Observe that
\begin{align}\label{eq:autos}
\Aut_u(U_1,\dots,U_k)=m(U_1,\dots,U_k)\Aut_u(U_k)\Aut_u(U_1,\dots,U_{k-1})\;.
\end{align}
For every subset of vertices $W\subset V$, we use  $G[W]$ to denote the graph induced by $W$ in $G$. For every unlabeled graph $U$, the notation $G[W]\equiv U$, not only denotes graph isomorphism, but also that $W$ induces a maximal connected component in $G$.

Given disjoint sets $V_1,\dots,V_{k-1} \subset [n]$, consider the graph class
$$
\mathcal{H}(V_1,\dots,V_{k-1})=\{G[V\setminus \cup_{i=1}^{k-1} V_i]:\;G\in \cG_n ,\, G[V_1]\equiv U_{1},\dots, G[V_{k-1}]\equiv U_{{k-1}}\}\;.
$$
In what follows, given $U_1,\dots,U_{k-1},$ we will consider all the possible tuples of subsets $V_1,\dots,V_{k-1}$ underlying the small induced components of our graph. In order to avoid multiplicity problems and avoid considering the same tuple several times, we need to carefully define the set of such tuples in the case where several $U_i$'s are isomorphic to each other. To this end, we consider the set of $(k-1)$-tuples of disjoint subsets defined as follows,
\begin{align}\label{eq:defcV}
\cV=\{(V_1,\dots,V_{k-1}),\, V_i\subset [n] \text{ disjoint}; \text{ if } U_i \equiv U_j \text{ then }V_i<V_j\}\;.
\end{align}
We then write $\cH=\cup_{(V_1,\dots,V_{k-1})\in \cV} \cH(V_1,\dots,V_{k-1})$. 

Since $\cG$ is a bridge-addable class on the set of vertices $V$, then $\mathcal{H}(V_1,\dots,V_{k-1})$ (for every $(V_1,\dots, V_{k-1})\in \cV$) is also a bridge-addable class on the set of vertices $V\setminus \cup_{i=1}^{k-1} V_i$. It is worth to stress here that $|V\setminus \cup_{i=1}^{k-1} V_i|\geq n-(k-1)u$ is large enough (provided $n$ is large enough), and thus, our previous results can be applied to these classes of graphs. 

Consider the graphs in $\cG_n$ with $k+1$ components such that the $k$ smallest ones are isomorphic to $U_1,\dots, U_k$ and where one component isomorphic to $U_k$ is marked. By counting these graphs in two ways, for $n$ large enough, we have
\begin{align}\label{eq:goal2}
m(U_1,\dots,U_k)\left| \cG_n^{k+1,\{U_1,\dots,U_k\}} \right|=  \sum_{(V_1,\dots,V_{k-1})\in\cV} |\mathcal{H}^{2,U_k}(V_1,\dots,V_{k-1})|\,
\end{align}
Therefore,
\begin{align}\label{eq:goal3}
&\frac{\left| \cG_n^{k+1,\{U_1,\dots,U_k\}} \right|}{|\cG_n|}=\nonumber\\
&\;\;\;\;= \frac{1}{m(U_1,\dots,U_k)} \!\!  \sum_{(V_1,\dots,V_{k-1})\in \cV}\!\!\!\!\!\! \frac{|\mathcal{H}^{2,U_k}(V_1,\dots,V_{k-1})|}{|\mathcal{H}(V_1,\dots,V_{k-1})|} \cdot\frac{|\mathcal{H}(V_1,\dots,V_{k-1})|}{|\mathcal{H}^{(1)}(V_1,\dots,V_{k-1})|} \cdot\frac{|\mathcal{H}^{(1)}(V_1,\dots,V_{k-1})|}{|\cG_n|}\;.
\end{align}
Thus it suffices to estimate the three ratios in the sum above.

Let $\theta_1:=\frac{\theta_k}{8}$ and $\theta_{k-1}:=\frac{\theta_k}{8}$. 
Let $\zeta_1$ be the constant obtained from Proposition~\ref{prop:k1} with $\theta_1$ and $U=U_k$. Let $\zeta_2$ be the constant obtained by induction with $k-1$, $\theta_{k-1}$ and $U_1,\dots,U_{k-1}$. 
We set $\zeta_0:=\min\left\{\zeta_1,\zeta_2,\frac{\theta_k}{20},k^{-2}\right\}$.

Let us first show that most of graphs in $\cH$ are in classes $\mathcal{H}(V_1,\dots,V_{k-1})$ that are 
close to be tight. Let $\cV_0\subset \cV$ be the set of $(k-1)$-tuples such that $\cH(V_1,\dots,V_{k-1})$ satisfies
\begin{align}\label{eq:H_tight}
\Pr(H\in \cH(V_1,\dots,V_{k-1})\text{ connected})\geq (1+\zeta_0)e^{-1/2},
\end{align}
and let $\cH_0=\cup_{(V_1,\dots,V_{k-1})\in \cV_0} \cH(V_1,\dots,V_{k-1})$. 
\begin{cla}\label{cla:H}
There exists a $\zeta_3$ such that if $\cG$ is $\zeta_3$-tight and $n$ is large enough, we have
$$
|\cH_0|\leq \zeta_0|\cH|\;.
$$
\end{cla}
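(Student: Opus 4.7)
I would prove Claim~\ref{cla:H} by contradiction, assuming $|\cH_0|>\zeta_0|\cH|$, and derive incompatible bounds on the ratio $|\cH^{(1)}|/|\cH|$. The first ingredient is elementary: by Theorem~\ref{thm:conj} applied to the bridge-addable class $\cH(V_1,\dots,V_{k-1})$ on $n-\sum|U_i|$ vertices (still large), every $(V_1,\dots,V_{k-1})\in\cV$ satisfies $|\cH^{(1)}(V_1,\dots,V_{k-1})|\geq (1-\epsilon)e^{-1/2}|\cH(V_1,\dots,V_{k-1})|$, while those in $\cV_0$ satisfy the stronger bound with $(1+\zeta_0)e^{-1/2}$ by definition. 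Summing and using the contradiction hypothesis $|\cH_0|>\zeta_0|\cH|$:
$$
|\cH^{(1)}| \geq (1-\epsilon)e^{-1/2}|\cH| + (\zeta_0+\epsilon)e^{-1/2}|\cH_0| \geq \left(1-\epsilon+\zeta_0^2+\zeta_0\epsilon\right)e^{-1/2}|\cH|,
$$
so for $\epsilon$ chosen small relative to $\zeta_0$, we would get $|\cH^{(1)}|/|\cH|\geq (1+\zeta_0^2/2)e^{-1/2}$.

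Next, I would invoke the induction hypothesis (the first part of Theorem~\ref{thm:main_forests} for $k-1$), noting that $|\cH^{(1)}|=|\cG_n^{k,\{U_1,\dots,U_{k-1}\}}|$, to get $|\cH^{(1)}|/|\cG_n|\leq a+\theta_{k-1}$, where $a=e^{-1/2}e^{-\sum|U_i|}/\Aut_u(U_1,\dots,U_{k-1})$. Combined with the previous step, this would force
$$
|\cH|/|\cG_n| \leq (a+\theta_{k-1})\cdot\frac{e^{1/2}}{1+\zeta_0^2/2}\leq ae^{1/2}(1-\zeta_0^2/4),
$$
when $\theta_{k-1}$ is chosen small enough. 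So the goal reduces to an independent lower bound $|\cH|/|\cG_n|\geq ae^{1/2}(1-\zeta_0^2/5)$, giving the contradiction.

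The plan for this lower bound, which is the hard part, is to recover the Poisson factorial-moment identity $\sum_{j\geq 0} a\cdot (1/2)^j/j! = ae^{1/2}$ term by term. I would decompose $|\cH|/|\cG_n|=\sum_{j\geq 0}(\sum_{G\in\cG_n^{(k+j)}}N_G)/|\cG_n|$ by the number of connected components of the underlying graph. The $j=0$ summand is already close to $a$ by induction. For each $j\geq 1$, I would set up a double counting between tuples $(G',e_1,\dots,e_j)$ with $G'\in \cG_n^{k,\{U_1,\dots,U_{k-1}\}}$ and $e_1,\dots,e_j$ edges of its big component, and pairs $(G,S)$ counted in $\cH^{(j+1)}$: removing the $e_i$ splits off $j$ pendant subtrees, producing the extra small components. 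Combining this with Lemma~\ref{lem:tight} applied to the $\zeta_3$-tight global class $\cG$ (which pins down $|\cG_n^{(k+j)}|/|\cG_n|$ near its Poisson value for $j\leq m$) and an elementary tail bound from~\eqref{eq:easyBound} for $j>m$, yields the contribution $a\cdot(1/2)^j/j!\cdot(1-o(1))$ per $j$; summing recovers $ae^{1/2}(1-o(1))$.

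The main obstacle is the edge-removal step in the bijection: when $\cG$ is a proper subclass of all forests, not every edge of the big component of $G'$ can be removed while staying in $\cG$. I expect to handle this via an averaging argument, showing that for most $G'\in\cG_n^{k,\{U_1,\dots,U_{k-1}\}}$ a large fraction of edges are removable—a property that follows locally from the $\zeta_3$-tightness (via Proposition~\ref{lem:2} and the second part of Proposition~\ref{prop:k1} already established for $k=1$), and whose higher-$k$ analogue will later be generalized in Section~\ref{sec:transfer}.
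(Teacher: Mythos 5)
Your contradiction structure (averaging the MSW lower bound over $\cH\setminus\cH_0$ against the definition of $\cH_0$, to deduce $|\cH^{(1)}|/|\cH|\geq(1+\zeta_0^2/2)e^{-1/2}$) is the same as the paper's closing step. And your reduction—that it suffices to pair this against a lower bound $|\cH|/|\cG_n|\geq ae^{1/2}(1-\zeta_0^2/5)$, or equivalently (via the inductive hypothesis) $|\cH|\geq(1-o(1))e^{1/2}|\cG_n^{k,\{U_1,\dots,U_{k-1}\}}|$—is also sound, and is exactly the content of the paper's inequality~\eqref{eq:contradiction}.

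The genuine gap is in how you propose to prove that lower bound on $|\cH|$. Your edge-removal double counting between $\cG_n^{k,\{U_1,\dots,U_{k-1}\}}$ and $\cH^{(j+1)}$ requires controlling the fraction of removable edges in the big component, and at this point in the paper you have no such control. Proposition~\ref{lem:2} and the second part of Proposition~\ref{prop:k1} only pin down the \emph{counts} $\alpha^{G_n}(T)$; they say nothing about whether deleting the corresponding edges keeps you in $\cG$. The results in Section~\ref{sec:transfer} (in particular~\eqref{eq:removable} and Lemma~\ref{lem:remove_cuts}) would give you exactly what you want, but they are proved \emph{using} Theorem~\ref{thm:main_forests}, whose first part is what Claim~\ref{cla:H} is embedded in—so invoking them here is circular. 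Even the exact double counting~\eqref{eq:DC} that first yields quantitative removability in Section~\ref{ssc:k} sits in the \emph{second} part of the proof of Theorem~\ref{thm:main_forests}, which assumes the first part already proven. So the "averaging argument" you gesture at cannot be supplied from what is available at this stage.

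The paper sidesteps edge removal entirely with a pinching argument that never touches removability and does not even need the induction hypothesis. It introduces $\cJ$, the collection of graphs in $\cG_n$ with at least $k$ components and $k-1$ non-largest components marked, so that $|\cJ|=\sum_{j\geq0}\binom{k+j-1}{k-1}|\cG_n^{(k+j)}|$. Lemma~\ref{lem:tight} (plus the crude tail bound from~\eqref{eq:easyBound}) pins $|\cJ|$ down to $(1\pm\zeta_0^2/10)e^{1/2}|\cG_n^{(k)}|$. On the other hand, $\cJ$ splits as $\sum_{\{W_1,\dots,W_{k-1}\}}|\cJ(W_1,\dots,W_{k-1})|$, and Theorem~\ref{thm:conj} applied to each bridge-addable piece gives $|\cJ(W_1,\dots,W_{k-1})|\leq(1+\zeta_3)e^{1/2}|\cG_n^{k,\{W_1,\dots,W_{k-1}\}}|$. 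Since only boundedly many multisets (at most $(u^u)^k$, because only trees of size at most $u$ can appear) carry any mass, the slack in the sum is tiny, and every individual term—in particular $\cH=\cJ(U_1,\dots,U_{k-1})$—must also be within a $(1-\zeta_0^2/5)$ factor of its MSW ceiling. That is the lower bound you need, obtained without removing a single edge. I would recommend you replace your double-counting plan with this pigeonhole argument: it is both correct and considerably simpler.
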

\begin{proof}[Proof of the Claim]
We first need to define the following subclasses that generalize $\mathcal{H}(V_1,\dots,V_{k-1})$.
For any $(k-1)$-tuple of trees $(W_1,W_2,\dots,W_{k-1})$, we define
$$
\cJ(W_1,\dots,W_{k-1};V_1,\dots,V_{k-1})=\{G[V\setminus \cup_{i=1}^{k-1} V_i]:\;G\in \cG_n,\, G[V_1]\equiv W_{1},\dots, G[V_{k-1}]\equiv W_{{k-1}}\}\;.
$$
Similarly as in~\eqref{eq:defcV} to avoid problems of multiplicity, we define the following subsets that generalize~$\cV$,
$$
\cV(W_1,\dots,W_{k-1})=\{(V_1,\dots,V_{k-1}),\, V_i\subset [n] \text{ disjoint}; \text{ if } W_i \equiv W_j \text{ then }V_i<V_j\}\;.
$$

We stress here that, by definition, for any non-empty class $\cJ(W_1,\dots,W_{k-1};V_1,\dots,V_{k-1})$ such that $\mathcal{H}(V_1,\dots,V_{k-1})$ is non-empty, we have $|W_i|\leq u$, for every $1\leq i\leq k-1$.
As before, we note that $\cJ(W_1,\dots,W_{k-1};V_1,\dots,V_{k-1})$ is bridge-addable. We will write $\cJ(W_1,\dots,W_{k-1})=\cup_{(V_1,\dots,V_{k-1})\in \cV(W_1,\dots,W_{k-1}) } \cJ(W_1,\dots,W_{k-1};V_1,\dots,V_{k-1})$ and we will write 
$$\cJ=\bigcup_{\{W_1,\dots,W_{k-1}\}} \cJ(W_1,\dots,W_{k-1}),$$
where the union is taken over multisets of trees $\{W_1,\dots,W_{k-1}\}$ 
and where for each multiset an arbitrary ordered tuple $(W_1,\dots,W_{k-1})$ is chosen.
Thus, $\cJ$ can be understood as the set of graphs in $\cG_n$ with at least $k$ components where exactly $k-1$ of the non-largest ones are marked.
In particular, we have:
\begin{align}\label{eq:Jbinomial}
|\cJ| &= \sum_{j\geq 0}\binom{k+j-1}{k-1} |\cG_n^{(k+j)}|\;.
\end{align}

Let $\eta=\zeta_0^3$ and $m$ such that $\sum_{\ell\geq m-k} \frac{1}{\ell!}\leq \eta$ and $m\geq k$. 
By Lemma~\ref{lem:tight} there exists a $\zeta_4$ such that if $\cG$ is $\zeta_4$-tight and $n$ is large enough, we have for every $i\leq m$
$$
\frac{|\cG^{(i)}_n|}{|\cG_n|} = 
\frac{\left(\frac{1}{2}\pm \zeta_0^3\right)^{i-1}}{(i-1)!}\;.
$$
Moreover, using the previous bound and~\eqref{eq:easyBound}, if $i> m$,
$$
\frac{|\cG^{(i)}_n|}{|\cG_n|} \leq 
\frac{\left(\frac{1}{2}+ \zeta_0^3\right)^{m}}{(i-1)!}\;.
$$
Therefore from~\eqref{eq:Jbinomial} we obtain
\begin{align}\label{eq:J}
|\cJ|  
		&= \frac{\left(\frac{1}{2}\pm \zeta_0^3\right)^{k-1}}{(k-1)!}\left(\sum_{j= 0}^{m-k}\frac{\left(\frac{1}{2}\pm \zeta_0^3\right)^{j}}{j!} \pm \left(\frac{1}{2}+ \zeta_0^3\right)^{m-k+1}\sum_{j> m-k}\frac{1}{j!} \right)|\cG_n|\nonumber \\
		&= \frac{\left(\frac{1}{2}\pm \zeta_0^3\right)^{k-1}}{(k-1)!}\left(e^{(1/2\pm \zeta_0^3)}\pm 2  \eta \right)|\cG_n|\nonumber \\
		&= \left(1\pm \frac{\zeta_0^2}{10}\right) e^{1/2}  |\cG_n^{(k)}|\;,
\end{align}
since $\zeta_0\leq k^{-2}$ and $\zeta_0$ is a small constant.

Now we set $\zeta_3:= \min\left\{ \frac{\zeta_0^2}{10(u^u)^k}, \zeta_4\right\}$. 
Fix $W_1,\dots,W_{k-1}$. Since $\cJ(W_1,\dots,W_{k-1})$ is a disjoint union of bridge-addable classes ($\cJ(W_1,\dots,W_{k-1};V_1,\dots,V_{k-1})$, for each $(V_1,\dots,V_{k-1})$) of graphs with $n-\sum_{j=1}^{k-1} |W_j|\geq n-(k-1)u$ vertices, if $n$ is large enough, by Theorem~\ref{thm:conj} applied to each class $\cJ(W_1,\dots,W_{k-1};V_1,\dots,V_{k-1})$, we have
\begin{align}\label{eq:local_J}
|\cJ(W_1,\dots,W_{k-1})|&\leq (1+\zeta_3) e^{1/2} |\cG_n^{k,\{W_1,\dots,W_{k-1}\}}|\nonumber\\
&\leq \left(1+\frac{\zeta_0^2}{10(u^u)^k}\right) e^{1/2} |\cG_n^{k,\{W_1,\dots,W_{k-1}\}}|\;.
\end{align}
Since there are at most $(u^u)^k$ multisets of unrooted trees $\{W_1,\dots,W_k\}$ of order at most $u$, from~\eqref{eq:J} and~\eqref{eq:local_J}, we have that for every $W_1,\dots,W_{k-1}$,
$$
|\cJ(W_1,\dots,W_{k-1})|\geq (1-\zeta_0^2/5) e^{1/2} |\cG_n^{k,\{W_1,\dots,W_{k-1}\}}|\;.
$$
This holds in particular for $\cH=\cJ(U_1,\dots,U_{k-1})$, implying
\begin{align}\label{eq:contradiction}
|\cG_n^{k,\{U_1,\dots,U_{k-1}\}}|  \leq (1+\zeta_0^2/4)  e^{-1/2} |\cH|\;,
\end{align}
since $\zeta_0$ is a small constant.

\noindent For the sake of contradiction assume now that $|\cH_0|\geq \zeta_0 |\cH|$.

Since $\cH\setminus \cH_0$ is a disjoint union of bridge-addable classes on $n-\sum_{j=1}^{k-1}|U_j|\geq n -(k-1)u$ vertices, provided that $n$ is large enough, Theorem~\ref{thm:conj} implies $\Pr(H\in \cH\setminus \cH_0\text{ connected})\geq (1-\zeta_3)e^{-1/2}$. 
Moreover, by definition of $\cH_0$, 
we have  $\Pr(H\in \cH_0\text{ connected})\geq (1+\zeta_0)e^{-1/2}$. We obtain
\begin{align*}
|\cG_n^{k,\{U_1,\dots,U_{k-1}\}}| &= \Pr(H\in \cH \text{ connected}) |\cH| \\
&= \Pr(H\in \cH\setminus \cH_0\text{ connected})|\cH\setminus \cH_0|+\Pr(H\in \cH_0\text{ connected})|\cH_0|\\
 &\geq \left((1-\zeta_3)|\cH\setminus \cH_0|+(1+\zeta_0)|\cH_0|\right)e^{-1/2} \\
 &\geq \left( 1+ \zeta_0^2 -\zeta_3+\zeta_0\zeta_3\right)e^{-1/2}|\cH| \\
 & \geq \left( 1+  \zeta_0^2/2\right)e^{-1/2}|\cH| \;,
\end{align*}
which gives a contradiction with~\eqref{eq:contradiction}. This concludes the proof of the claim.
\end{proof}

We now set $\zeta:=\min\{\zeta_0,\zeta_3\}$, where $\zeta_3$ is the one given by the previous claim.

Let $(V_1,\dots,V_{k-1})\in \cV\sm \cV_0$; that is, the class $\cH(V_1,\dots,V_{k-1})$ is $\zeta_0$-tight (and thus, also $\zeta_1$-tight). By Proposition~\ref{prop:k1} applied to the class $\mathcal{H}(V_1,\dots,V_{k-1})$, with the chosen $\theta_1$ and $U=U_k$,  and since the class is $\zeta_1$-tight and its elements have at least $n-\sum_{j=1}^{k-1} |V_j|\geq n-(k-1)u$ vertices, we have
\begin{align}\label{eq:a1}
\frac{|\mathcal{H}^{2,U_k}(V_1,\dots,V_{k-1})|}{|\mathcal{H}(V_1,\dots,V_{k-1})|}=e^{-1/2}\frac{e^{-|U_k|}}{\Aut_u(U_k)}\pm \frac{\theta_k}{8}\;.
\end{align}
Since $\cH(V_1,\dots,V_{k-1})$ is bridge-addable and since $(V_1,\dots,V_{k-1})\in\cV\sm \cV_0$, by Theorem~\ref{thm:conj} and 
by definition of $\cV_0$
\begin{align}\label{eq:a2}
\frac{|\mathcal{H}(V_1,\dots,V_{k-1})|}{|\mathcal{H}^{(1)}(V_1,\dots,V_{k-1})|} = e^{1/2}(1\pm \zeta_0)\;.
\end{align}
We proceed to bound the contribution of classes indexed by $\cV_0$. Using again the previous claim,
\begin{align}\label{eq:not_tight_V}
\sum_{(V_1,\dots,V_{k-1})\in \cV_0} |\mathcal{H}^{(1)}(V_1,\dots,V_{k-1})| 
&\leq |\cH_0| 
\leq \zeta_0 |\cH|\\
&\leq \zeta_0 (1-\zeta_0)^{-1} |\cH\sm \cH_0|\nonumber \\
&\leq 2\zeta_0 \sum_{(V_1,\dots,V_{k-1})\in \cV \sm \cV_0} |\mathcal{H}^{(1)}(V_1,\dots,V_{k-1})|\;, \nonumber
\end{align}
where the last inequality comes from~\eqref{eq:a2} and the fact that $\zeta_0$ is a small constant.
Therefore,
\begin{align*}
|\cG^{k, \{U_{1},\dots,U_{k-1}\}}_n| 
&= \sum_{(V_1,\dots,V_{k-1})\in \cV}  |\mathcal{H}^{(1)}(V_1,\dots,V_{k-1})| \\
& = (1\pm 2\zeta_0)\sum_{(V_1,\dots,V_{k-1})\in \cV\sm \cV_0} |\mathcal{H}^{(1)}(V_1,\dots,V_{k-1})| \;.
\end{align*}
Using the induction hypothesis for $k-1$, with the chosen $\theta_{k-1}$ and $U_1,\dots,U_{k-1}$, and since $\cG$ is $\zeta_2$-tight and its elements have at least $n-\sum_{j=1}^{k-1} |V_j|\geq n-(k-1)u$ vertices, it follows that
\begin{align}\label{eq:a3}
\sum_{(V_1,\dots,V_{k-1})\in\cV\sm \cV_0} \frac{|\mathcal{H}^{(1)}(V_1,\dots,V_{k-1})|}{|\cG_n|}
&= (1\pm 2\zeta_0)^{-1}\frac{ \left| \cG_n^{k,\{U_1,\dots,U_{k-1}\}} \right|}{|\cG_n|}\nonumber\\
&= (1\pm 2\zeta_0)^{-1}\left(e^{-1/2}\frac{e^{-\sum_{i=1}^{k-1}|U_{i}|}}{\Aut_u(U_{1},\dots,U_{k-1})}\pm \frac{\theta_k}{8}\right)\nonumber\\
&= e^{-1/2}\frac{e^{-\sum_{i=1}^{k-1}|U_{i}|}}{\Aut_u(U_{1},\dots,U_{k-1})}\pm \frac{\theta_k}{4}\;.
\end{align} 
We are now ready to estimate~\eqref{eq:goal3}. We rewrite~\eqref{eq:goal3} as 
$$
\frac{\left| \cG_n^{k+1,\{U_1,\dots,U_k\}} \right|}{|\cG_n|} = \frac{1}{m(U_1,\dots,U_k)}(\Sigma_{\cV_0} + \Sigma_{\cV\sm\cV_0})\;.
$$
where $\Sigma_{\cV_0}$ and $\Sigma_{\cV\sm\cV_0}$  are the contribution to the sum of the elements indexed by $\cV_0$ and by $\cV\sm \cV_0$, respectively. 

In order to estimate $\Sigma_{\cV_0}$, we note that $|\cH(V_1,\dots,V_{k-1})|\leq e|\cH^{(1)}(V_1,\dots,V_{k-1})|$, since the class $\cH(V_1,\dots,V_{k-1})$ is bridge-addable and using Theorem~2.5 in~\cite{MCSWplanar}. Using~\eqref{eq:not_tight_V}, we obtain
$$
\Sigma_{\cV_0}\leq \frac{e\zeta_0 |\cH|}{|\cG_n|}\leq 3 \zeta_0< \frac{\theta_k}{2}\;.
$$
To estimate $\Sigma_{\cV\sm \cV_0}$, we use~\eqref{eq:a1},~\eqref{eq:a2} and~\eqref{eq:a3}, to obtain that
\begin{align*}
\Sigma_{\cV\sm\cV_0}
&= e^{-1/2}\frac{e^{-\sum_{i=1}^k |U_i|}}{\Aut_u(U_{k})\Aut_u(U_{1},\dots,U_{k-1})}  \pm \frac{\theta_k}{2} \\
\end{align*}
Using the previous two estimates and~\eqref{eq:autos}, we get 
\begin{align*}
\frac{ \left| \cG_n^{k+1,\{U_1,\dots,U_k\}} \right|}{|\cG_n|}
&= \frac{1}{m(U_1,\dots,U_k)}\cdot e^{-1/2}\frac{e^{-\sum_{i=1}^k |U_i|}}{\Aut_u(U_{k})\Aut_u(U_{1},\dots,U_{k-1})}  \pm \theta_k \\
&=  e^{-1/2}\frac{e^{-\sum_{i=1}^k |U_i|}}{\Aut_u(U_{1},\dots,U_{k})}  \pm \theta_k\;.
\end{align*}
This concludes the proof of the first part.
\medskip

{\noindent\bf Proof of the second part.}
We will prove the second part of the theorem, using the first part of it and by induction on $k$. For $k=1$, the statement we want to prove is directly given by Proposition~\ref{prop:k1}. Assume now that the statement is true for $k-1$. 

Set $\hat\theta_k:=e^{-1/2}\frac{e^{-ku}}{(ku)!}\theta_k $.
By the induction hypothesis, for $\ell$, $\theta_{k-1}:= \frac{\hat \theta_k}{8}$, $\delta_{k-1}:=2\delta$ and $U_1,\dots, U_{k-1}$, there exists a $\zeta_{k-1}$ such that if $n$ is large enough, we have
\begin{align}\label{eq:whatwehave}
\frac{\sum_{\beta\notin \Xi(\delta_{k-1},\ell)}\left| \cG_{n,\beta}^{k,\{U_1,\dots,U_{k-1}\}} \right|}{\left|\cG_{n}^{k,\{U_1,\dots,U_{k-1}\}} \right|} <\frac{\hat \theta_k}{8}\;.
\end{align}
Since the first part of the theorem for $k$ is already proved, we use it to estimate the ratio between $\cG_{n}^{k+1,\{U_1,\dots,U_{k}\}}$ and   $\cG_{n}^{k,\{U_1,\dots,U_{k-1}\}}$. 
For the first one we use the first part of the theorem for $k$ with $\theta_k:=\frac{\hat \theta_k}{8}$ and $U_1,\dots,U_{k}$ and the corresponding $\zeta_{k}'$.
For the second one we use, as before, the first part of the theorem for $k-1$ with  $\theta_{k-1}$ and $U_1,\dots,U_{k-1}$ and the corresponding $\zeta_{k-1}$.
Set $\zeta:=\min\{\zeta_{k-1},\zeta_{k}'\}$ and let $n$ be large enough. 

Using~\eqref{eq:autos}, it follows that
\begin{align}\label{eq:ratio}
\frac{|\cG_{n}^{k+1,\{U_1,\dots,U_{k}\}}|}{|\cG_{n}^{k,\{U_1,\dots,U_{k-1}\}}|}&=  \frac{e^{-1/2}\frac{e^{-\sum_{i=1}^k |U_i|}}{\Aut_u(U_{1},\dots,U_{k})}  \pm \hat\theta_k/8}{e^{-1/2}\frac{e^{-\sum_{i=1}^{k-1} |U_i|}}{\Aut_u(U_{1},\dots,U_{k-1})}  \pm \hat\theta_k/8}\nonumber\\
&= \frac{e^{-|U_k|}}{m(U_1,\dots,U_{k-1})\Aut_u(U_k)}\left(1\pm \frac{ \theta_k}{3}\right)\;.
\end{align}
Let $T_1,\dots,T_s$ be all the possible rooted versions of the unrooted tree $U_k$. Observe that $|T_i|=|U_k|$ and that
\begin{align}\label{eq:more_autos}
\sum_{i=1}^s \frac{1}{\Aut_r(T_i)} = \frac{|U_k|}{\Aut_u(U_k)}\;.
\end{align}

Recall the definition of $p(\cH,T)$ given at the beginning of Section~\ref{ssc:k}.  
We perform an exact double-counting argument between the graphs in $\cG_n^{k,\{U_1,\dots,U_{k-1}\}}$ and in $\cG_n^{k+1,\{U_1,\dots,U_k\}}$ using $p(G,T_i)$ with $G\in \cG_n^{k,\{U_1,\dots,U_{k-1}\}}$, similar to the one used in~Section~\ref{subsec:components}. In one direction,  for any such graph $G$, we have exactly $\sum_{i=1}^s \alpha^G(T_i) p(G,T_i)$ ways to construct a graph $G'\in\cG_n^{k+1,\{U_1,\dots,U_{k}\}}$ by removing an edge. In the other direction, there are exactly $m(U_1,\dots,U_k)|U_k|(n-\sum_{i=1}^{k}|U_j|)$ ways to obtain a graph in $\cG_n^{k,\{U_1,\dots,U_{k-1}\}}$ from one in $\cG_n^{k+1,\{U_1,\dots,U_{k}\}}$ by adding an edge. 
Therefore, we have
\begin{align}\label{eq:DC}
\sum_{G\in \cG_n^{k,\{U_1,\dots,U_{k-1}\}}} \sum_{i=1}^s \alpha^G(T_i) p(G,T_i) 
=m(U_1,\dots,U_k)|U_k|\left(n-\sum_{i=1}^{k}|U_j|\right)|\cG_n^{k+1,\{U_1,\dots,U_{k}\}}| 
\end{align}

Using~\eqref{eq:ratio} and~\eqref{eq:more_autos}, it follows that
\begin{align*}
\frac{\sum_{G\in \cG_n^{k,\{U_1,\dots,U_{k-1}\}}} \sum_{i=1}^s \alpha^G(T_i) p(G,T_i)}{n |\cG_n^{k,\{U_1,\dots,U_{k-1}\}}|} 
&=\frac{m(U_1,\dots,U_k)|U_k|\left(n-\sum_{i=1}^{k}|U_j|\right)|\cG_n^{k+1,\{U_1,\dots,U_{k}\}}| }{n |\cG_n^{k,\{U_1,\dots,U_{k-1}\}}|}\\
& = \frac{n-\sum_{i=1}^{k}|U_j|}{n}\cdot\frac{|U_k| e^{-|U_k|}}{\Aut_u(U_k)}\left(1 \pm \frac{ \theta_k}{3}\right)\\
& = \sum_{i=1}^s \frac{ e^{-|T_i|}}{\Aut_r(T_i)}\left(1\pm\frac{ \theta_k}{2}\right)\;,
\end{align*}
provided that $n$ is large enough.

Since for every $G\in \cG_n$, $\sum_{i=1}^s \alpha^G(T_i) p(G,T_i)\leq n$, it follows that
\begin{align}\label{eq:interesting}
&\frac{\sum_{\beta\in \Xi(\delta_{k-1},\ell)} \left(\sum_{i=1}^s \beta(T_i) p(\cG_{n,\beta}^{k,\{U_1,\dots,U_{k-1}\}},T_i)\right)\cdot|\cG_{n,\beta}^{k,\{U_1,\dots,U_{k-1}\}}| }{ n |\cG_n^{k,\{U_1,\dots,U_{k-1}\}}|} \nonumber\\
&\;\;\;\;\;\;\;\; = \frac{\sum_{G\in \cG_n^{k,\{U_1,\dots,U_{k-1}\}}} \sum_{i=1}^s \alpha^G(T_i) p(G,T_i)}{ n |\cG_n^{k,\{U_1,\dots,U_{k-1}\}}| } \pm \frac{\sum_{\beta\notin \Xi(\delta_{k-1},\ell)} |\cG_{n,\beta}^{k,\{U_1,\dots,U_{k-1}\}}|}{{|\cG_n^{k,\{U_1,\dots,U_{k-1}\}}|} } \nonumber\\
&\;\;\;\;\;\;\;\;=  \sum_{i=1}^s \frac{ e^{-|T_i|}}{\Aut_r(T_i)}\left(1 \pm \frac{5 \theta_k}{8}\right)\;.
\end{align}
If $G'$ is obtained from $G$ by removing an edge that creates a component isomorphic to $U_k$, then $|\alpha^G(T)-\alpha^{G'}(T)|\leq |U_k|\leq u$ for every $T\in \cT$.
Therefore, if $G\in \cG_{n,\alpha}^{k,\{U_1,\dots,U_{k-1}\}}$ for some $\beta\in \Xi(\delta_{k-1},\ell)$, then $G'\in \cG_{n}^{k+1,\{U_1,\dots,U_{k}\}}$ is such that $\alpha^{G'}\in \Xi(\delta,\ell)$ (recall that $\delta_{k-1}=\delta/2$), provided that $n$ is large enough. We thus obtain a local version of~\eqref{eq:DC}
\begin{align*}
&\sum_{\beta\in \Xi(\delta_{k-1},\ell)}   \left(\sum_{i=1}^s \beta(T_i) p(\cG_{n,\beta}^{k,\{U_1,\dots,U_{k-1}\}},T_i)  \right)
\cdot|\cG_{n,\alpha}^{k,\{U_1,\dots,U_{k-1}\}}|\\
&\;\;\;\;\leq m(U_1,\dots,U_k)|U_k|\left(n-\sum_{i=1}^{k}|U_j|\right)\sum_{\beta\in \Xi(\delta,\ell)} |\cG_{n,\beta}^{k+1,\{U_1,\dots,U_{k}\}}|
\end{align*}

Using~\eqref{eq:interesting}, the last inequality and~\eqref{eq:ratio}, it follows that
\begin{align*}
\sum_{i=1}^s &\frac{ e^{-|T_i|}}{\Aut_r(T_i)}\left(1 - \frac{5 \theta_k}{8}\right) \\
& \leq \frac{1}{n|\cG_{n}^{k,\{U_1,\dots,U_{k-1}\}}|}\sum_{\beta\in \Xi(\delta_{k-1},\ell)} \left(\sum_{i=1}^s \beta(T_i) p(\cG_{n,\beta}^{k,\{U_1,\dots,U_{k-1}\}},T_i) \right) \cdot|\cG_{n,\beta}^{k,\{U_1,\dots,U_{k-1}\}}|\\
&\leq \frac{e^{-|U_k|}}{\Aut_u(U_k)|\cG_{n}^{k+1,\{U_1,\dots,U_{k}\}}|} \sum_{\beta\in \Xi(\delta,\ell)} \frac{(n-\sum_{i=1}^{k}|U_j|)|U_k|}{n} |\cG_{n,\beta}^{k+1,\{U_1,\dots,U_{k}\}}| \left(1+\frac{ \theta_k}{3}\right)\\
&\leq \frac{|U_k| e^{-|U_k|}}{\Aut_u(U_k)} \cdot\frac{\sum_{\beta\in \Xi(\delta,\ell)} |\cG_{n,\beta}^{k+1,\{U_1,\dots,U_{k}\}}|}{|\cG_{n}^{k+1,\{U_1,\dots,U_{k}\}}|}\left(1+\frac{ \theta_k}{3}\right)\\
&= \sum_{i=1}^s \frac{ e^{-|T_i|}}{\Aut_r(T_i)} \cdot\frac{\sum_{\beta\in \Xi(\delta,\ell)} |\cG_{n,\beta}^{k+1,\{U_1,\dots,U_{k}\}}|}{|\cG_{n}^{k+1,\{U_1,\dots,U_{k}\}}|}\left(1+\frac{ \theta_k}{3}\right)\;,
\end{align*}
where we used~\eqref{eq:more_autos} for the last equality. We conclude,
\begin{align*}
\frac{\sum_{\beta\in \Xi(\delta,\ell)} |\cG_{n,\beta}^{k+1,\{U_1,\dots,U_{k}\}}|}{|\cG_{n}^{k+1,\{U_1,\dots,U_{k}\}}|} &\geq 1-\theta_k\;,
\end{align*}
which finishes the proof of the theorem.
\label{pageTbis}
\end{proof}

\section{From classes of forests to classes of graphs}\label{sec:transfer}

In this section we extend the results of the previous section (where we obtained Theorem~\ref{thm:main} for classes of forests) to general bridge-addable classes, concluding the proof of Theorem~\ref{thm:main}.
In~\ref{subsec:removable} we prove that graphs in $\zeta$-tight bridge-addable classes tend to have many removable edges, and in~\ref{subsec:conclude} we use this property and the results of Section~\ref{sec:trees} to conclude the proof of Theorem~\ref{thm:main}. Finally in~\ref{subsec:BS} we give the proof of Corollary~\ref{cor:BS}.

\subsection{Removable edges in tight bridge-addable classes of graphs}
\label{subsec:removable}

A \emph{$2$-block} of a graph $G$ is a maximal $2$-edge-connected graph (we assume that the graph composed by a single vertex is also $2$-edge-connected). 
Every graph admits a unique decomposition into $2$-blocks, joined by edges in a tree-like fashion. 

For a graph class $\cG_n$, we can consider the coarsest partition 
\begin{align}\label{eq:subclasses}
\cG_n = \biguplus_i \cH^{[i]}_n
\end{align}
into subclasses $\cH^{[1]}_n, \cH^{[2]}_n,\dots$ such that every two graphs in the same subclass have the same $2$-blocks. By construction, if $\cG_n$ is bridge-addable, then every subclass $\cH^{[i]}_n$ is also bridge-addable.

For each such subclass $\cH$, we assume that we have chosen, arbitrarily and once and for all, a spanning tree for each $2$-block of the graphs in $\cH$. We denote by $\cF_{\cH}$ the class of forests obtained by replacing each $2$-block with the corresponding spanning tree in each graph in $\cH$. This is well-defined, since, by construction, graphs in the same subclass have the same $2$-blocks. Moreover, the class $\cF_\cH$ is also bridge-addable and the component structure (number and size) of each graph $H\in \cH$ is preserved in the corresponding forest $F_H\in \cF_{\cH}$. This construction was introduced in~\cite{BBG}, to which we refer for more details.
\medskip

We start with the following useful lemma that says, loosely speaking, that most graphs in a $\zeta$-tight belong to subclasses $\cH^{[i]}_n$ that are themselves close to be tight.
\begin{lemma}\label{lemma:tightsubclasses}
For every $\zeta_0>0$ there exists $\zeta>0$ such that if $n$ is large enough, for any bridge-addable class $\cG$ that is $\zeta$-tight, the following is true. Let $\cH^{[1]}_n, \cH^{[2]}_n,\dots$ be the partition of $\cG_n$ in bridge-addable subclasses defined above and let $S_n(\zeta_0)$ be the set of values $i$ such that:
\begin{align}\label{eq:Hi_tight}
\Pr(H_n\in \cH^{[i]}_n\text{ connected})\leq (1+\zeta_0)e^{-1/2},
\end{align}
where $H_n\in \cH^{[i]}_n$ denotes a uniform random graph in $\cH^{[i]}_n$.
Then we have
\begin{align}\label{eq:Hi_tight2}
	\left|\bigcup_{i\in S_n(\zeta_0)} \cH^{[i]}_n\right|\geq (1-\zeta_0) |\cG_n|\;.
\end{align}
\end{lemma}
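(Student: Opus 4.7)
The plan is to proceed by contradiction. Suppose that for some $\zeta_0 > 0$ the conclusion fails: there exist arbitrarily small $\zeta$, $\zeta$-tight bridge-addable classes $\cG$, and arbitrarily large $n$ for which
$\sum_{i \notin S_n(\zeta_0)} |\cH^{[i]}_n| > \zeta_0 |\cG_n|$.
Writing $a_i := |\cH^{[i]}_n|/|\cG_n|$ and $c_i := \Pr(H \text{ connected} \mid H \in \cH^{[i]}_n)$, this means $\sum_{i \notin S_n(\zeta_0)} a_i > \zeta_0$, while by definition of $S_n(\zeta_0)$ each such $i$ satisfies $c_i > (1+\zeta_0)e^{-1/2}$.

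The key observation is that each non-empty subclass $\cH^{[i]}_n$ is itself bridge-addable (as recalled just before the statement), and so Theorem~\ref{thm:conj} applies to it. More precisely, pick $\epsilon' := \zeta_0^2/4$ and let $n_0 = n_0(\epsilon')$ be given by Theorem~\ref{thm:conj}; then for every $n \geq n_0$ and every index $i$, we have
$$c_i \geq (1-\epsilon')e^{-1/2}.$$
Here I use that Theorem~\ref{thm:conj} is stated for any bridge-addable class and any large enough $n$, with $n_0$ independent of the class; thus it applies uniformly to all subclasses, regardless of how small or how structurally constrained they are.

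Now I would compute the total conditional probability that $G_n$ is connected, averaging over the partition:
\begin{align*}
\Pr(G_n \text{ connected}) = \sum_i a_i c_i
&\geq (1-\epsilon')e^{-1/2}\!\!\!\sum_{i \in S_n(\zeta_0)}\!\!\! a_i \;+\; (1+\zeta_0)e^{-1/2}\!\!\!\sum_{i \notin S_n(\zeta_0)}\!\!\! a_i \\
&> e^{-1/2}\bigl[(1-\epsilon')(1-\zeta_0) + (1+\zeta_0)\zeta_0\bigr] \\
&\geq e^{-1/2}\bigl(1 + \zeta_0^2 - \epsilon'\bigr) \;=\; e^{-1/2}\bigl(1 + 3\zeta_0^2/4\bigr).
\end{align*}
Choosing $\zeta < \zeta_0^2/2$ then contradicts the $\zeta$-tightness of $\cG$, which requires $\Pr(G_n \text{ connected}) \leq (1+\zeta)e^{-1/2}$.

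I do not expect any serious obstacle here: the only subtle point is making sure that Theorem~\ref{thm:conj} genuinely applies to each subclass $\cH^{[i]}_n$ uniformly in $i$, which is fine since the threshold $n_0$ in that theorem depends only on the desired precision $\epsilon'$, not on the class. The argument is essentially a one-line averaging of the local ``almost-lower bound'' from Theorem~\ref{thm:conj} against the global ``upper bound'' encoded by $\zeta$-tightness.
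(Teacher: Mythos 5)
Your proof is correct and is essentially the same averaging argument the paper gestures to (it refers to the claim inside the proof of Theorem~\ref{thm:main_forests}): apply Theorem~\ref{thm:conj} uniformly to each bridge-addable subclass $\cH^{[i]}_n$ for the lower bound, use the definition of $S_n(\zeta_0)$ for the "bad" indices, and average against $\zeta$-tightness to derive a contradiction. Your numerical choices ($\epsilon' = \zeta_0^2/4$, $\zeta < \zeta_0^2/2$) are sound.
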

\begin{proof}
	The proof is direct by an averaging argument in a similar way as in the claim inside the proof of Theorem~\ref{thm:main_forests}.
\end{proof}

The next theorem states that $\zeta$-tight bridge-addable classes of graphs (not only forests) are, in fact, close to be bridge-alterable. In what follows, we say that a vertex $v$ in $G_n$ is \emph{connected to the bulk of $G_n$ through a cut-edge}, if there is a cut-edge $e$ incident to $v$ such after removing $e$, the newly created component not containing $v$ has size at least $3n/4$. Note that for each $v$ there is at most one edge $e$ with this property. The connected component containing $v$ after removing $e$ is called a \emph{pendant graph}.
The edge $e$ can {\it a priori} be removable or not, and if it is we say that $v$ is \emph{connected to the bulk of $G_n$ through a removable cut-edge}.

\begin{lemma}\label{lem:remove_cuts}
For every $\theta$, there exist a $\zeta$ and an $\ell$ such that
provided that $n$ is large enough, for every $\zeta$-tight bridge-addable class $\cG$,
 we have that if $G_n$ is a graph chosen uniformly at random in $\cG_n$, and $V_n$ is a vertex chosen uniformly at random in $G_n$, the following holds with probability at least $1-\theta$:
$V_n$ is connected to the bulk of $G_n$ through a removable cut-edge and the corresponding pendant graph has order at most $\ell$.
\end{lemma}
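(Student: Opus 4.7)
My plan is to reduce to classes of forests via the 2-block decomposition introduced in Section~\ref{subsec:removable}, and then to establish the statement through a double-counting argument based on bridge-addability. The main technical challenge, I expect, will be converting the pointwise probability convergence supplied by Theorem~\ref{thm:main_forests} into a lower bound on the expected total size of the small components of $G_n$.

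First I would apply Lemma~\ref{lemma:tightsubclasses} with $\zeta_0:=\theta/10$ to restrict to a $\zeta_0$-tight subclass $\cH=\cH^{[i]}_n$, at the cost of a $\zeta_0$-fraction of $|\cG_n|$. The bijection $H\mapsto F_H$ carries $\cH$ to the bridge-addable class of forests $\cF_\cH$, which is itself $\zeta_0$-tight since the bijection preserves the number and sizes of connected components. In particular, writing $\mathrm{smv}_\ell(G):=\sum_{C:\,|C|\leq \ell}|C|$ for the number of vertices of $G$ lying in a component of size at most $\ell$, we have $\mathrm{smv}_\ell(H)=\mathrm{smv}_\ell(F_H)$ for every $H\in \cH$.

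Next I would apply the first part of Theorem~\ref{thm:main_forests} to $\cF_\cH$, which gives that $\Pr(\sma(F)\equiv\{U_1,\dots,U_k\})$ is close to $p_\infty(\{U_1,\dots,U_k\})$ for each fixed multiset. Under the Poisson description of $p_\infty$, each unrooted tree $U$ appears as a component of ${\bf f}$ with Poisson intensity $\lambda_U=e^{-|U|}/\Aut_u(U)$, and since $z(T^u)'(z)|_{z=e^{-1}}=T(e^{-1})=1$ by Lemma~\ref{lemma:evalGF}, one obtains $\mathbb{E}_{p_\infty}[|{\bf f}|]=\sum_U |U|\lambda_U=1$. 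Hence for $\ell$ large enough, $\sum_{{\bf f}:|{\bf f}|\leq\ell}|{\bf f}|\,p_\infty({\bf f})>1-\theta/10$; summing the finitely many instances of Theorem~\ref{thm:main_forests} over multisets with $\sum|U_i|\leq \ell$ then yields $\mathbb{E}[\mathrm{smv}_\ell(H)]\geq 1-\theta/5$ for $H$ uniform in $\cH$, and averaging over the tight subclasses gives $\mathbb{E}[\mathrm{smv}_\ell(G_n)]\geq 1-\theta/3$.

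Finally I would perform the double-counting. Let $A$ denote the set of triples $(G,v,e)$ satisfying the conditions of the lemma. The map $(G,v,e)\mapsto(G\setminus e, v, w)$, where $w$ is the other endpoint of $e$, is a bijection from $A$ onto the set of triples $(G',v,w)$ with $G'\in \cG_n$, $v$ in a component $C$ of $G'$ with $|C|\leq \ell$, and $w\in V(G')\setminus C$; the inverse $(G',v,w)\mapsto(G'+vw,v,vw)$ is well-defined by bridge-addability, which also ensures that the edge $vw$ is removable in $G'+vw$. Hence
\[
|A| \;=\; \sum_{G'\in \cG_n}\;\sum_{\substack{C\text{ component of }G'\\|C|\leq \ell}}|C|(n-|C|)\;\geq\;(n-\ell)\,\mathbb{E}[\mathrm{smv}_\ell(G_n)]\,|\cG_n|\;\geq\;(1-\theta)\,n\,|\cG_n|
\]
for $n$ large enough. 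By the note preceding the statement, each pair $(G,v)$ corresponds to at most one $e$ with the required $3n/4$-property, so each good pair contributes exactly once to $|A|$; consequently $(G_n,V_n)$ satisfies the conclusion of the lemma with probability at least $1-\theta$.
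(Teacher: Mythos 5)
Your overall strategy---reducing to forests via the $2$-block decomposition, converting Theorem~\ref{thm:main_forests}(i) into the bound $\mathbf{E}[\mathrm{smv}_\ell(G_n)]\geq 1-\theta/3$, and closing with a single global double-count---is genuinely different from the paper's proof, which establishes $p(\cG_n^{k,\{U_1,\dots,U_{k-1}\}},T)\geq 1-8\tilde{\theta}$ separately for each fixed multiset of small components by a \emph{local} double-count that also needs the second part of Theorem~\ref{thm:main_forests}. Your computation of $\mathbf{E}_{p_\infty}[|\mathbf{f}|]=1$ via the Poisson description of $p_\infty$ and $T(e^{-1})=1$ is correct, and the reduction through Lemma~\ref{lemma:tightsubclasses} is sound.

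There is, however, a gap in the double-counting step. The map $(G,v,e)\mapsto(G\setminus e,v,w)$ is \emph{not} a bijection from $A$ onto the set of triples $(G',v,w)$ with $v$ in a component $C$ of $G'$ of size at most $\ell$ and $w\in V(G')\setminus C$. You verify that the candidate inverse $(G',v,w)\mapsto(G'+vw,v,vw)$ lands in $\cG_n$ and that $vw$ is removable, but you do not verify the third requirement for membership in $A$: by the definition just before the lemma, the ``newly created component not containing $v$'', namely the component of $w$ in $G'$, must have size at least $3n/4$. If $G'$ has a third component containing $w$ (possibly another small one, or a medium-sized one), the inverse map produces a triple outside $A$. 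The correct identity is $|A|=\sum_{G'}\mathrm{smv}_\ell(G')\cdot L(G')$, where $L(G')$ is the number of vertices of $G'$ lying in components of size at least $3n/4$, and your displayed formula $\sum_{G'}\sum_{C:|C|\leq\ell}|C|(n-|C|)$ is therefore only an \emph{upper} bound on $|A|$, which is the wrong direction for the lower bound you need. The gap is repairable: Theorem~\ref{thm:main_forests}(i) already tells you that almost all of the mass is carried by graphs whose non-largest components form a bounded unlabeled forest $\mathbf{f}$, and for those graphs $L(G')=n-|\mathbf{f}|\geq 3n/4$, so restricting the double-count to such graphs recovers essentially your numerical estimate. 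But as written the step does not go through.
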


\begin{proof}
We first prove the lemma for bridge-addable classes of forests and then we transfer it to general bridge-addable classes of graphs.

Assume that $\cG_n$ is composed by forests. We first show that there exists an $\ell$ such that if $G_n$ is a graph chosen uniformly at random from $\cG_n$, then with probability at least $(1-\theta/4)$ we have that $p(\cG_n,T)\geq 1-\theta/4$ for every $T\in \cT_{\leq \ell}$. Then we will prove that with probability at least $1-\theta$, most of the pendant trees in $G_n$ have size at most $\ell$.

From Lemma~\ref{lemma:evalGF}, we can choose  $\ell$  large enough such that
\begin{align*}
\sum_{T\in\cT_{\leq \ell}} \frac{e^{-|T|}}{\Aut_r(T)}&\geq 1-\frac{\theta}{10}\;, \\
e^{-1/2} \sum_{k=0}^{\ell} \sum_{\{U_1,\dots,U_k\}\in \cU_{\leq \ell}} \frac{e^{-\sum_{i=1}^k |U_i|}}{\Aut_u(U_1,\dots,U_k)} &\geq 1-\frac{\theta}{10}\;.
\end{align*}

Let $T\in \cT_{\leq \ell}$ be a given rooted tree, we will show that $p(\cG_n, T)\geq 1-\theta/4$.
Let $\lambda$ be the size of the equivalence class of the root of $T$ (that is the number of vertices where $T$ can be re-rooted giving rise to another copy of $T$).  For every $k\leq \ell$ and every $U_1,\dots,U_k$ of order at most $\ell$ such that $U_k$ is the unrooted version of $T$, we will write the ratio between $|\cG_n^{k+1,\{U_1,\dots,U_k\}}|$ and $|\cG_n|$ in two ways.
We select $\zeta$ small enough and $n$ large enough, such that we can apply Theorem~\ref{thm:main_forests} for every $k\leq \ell$, for $\theta_k=\tilde{\theta}$ (to be fixed later) and for every $U_1,\dots,U_k$ of size at most $\ell$. If $\cG$ is $\zeta$-tight, we obtain
\begin{align*}
  \frac{|\cG_{n}^{k+1,\{U_1,\dots,U_{k}\}}|}{|\cG_n|}
& = e^{-1/2}\frac{e^{-\sum_{i=1}^k |U_i|}}{\Aut_u(U_1,\dots,U_k)}\pm \tilde{\theta}\;.
\end{align*}

As before, we perform an exact local double-counting argument with the difference that now we only count those graphs $G'\in \cG_{n}^{k+1,\{U_1,\dots,U_{k}\}}$ that can be obtained from $G\in \cG_{n}^{k,\{U_1,\dots,U_{k-1}\}}$ by removing an edge from where a copy of $T$ is pending. This can only be done if $U_k$ is the unrooted version of $T$ and if the edge that connects $T$ to the rest of $G$ is removable. Moreover, if $G'$ is obtained from $G$ in such a way, for every $T_0\in \cT_{\leq \ell}$ we have $|\alpha^G(T_0)-\alpha^{G'}(T_0)|\leq |T|\leq \frac{\tilde{\theta}n}{2}$. In one direction, given a graph $G\in \cG_{n}^{k,\{U_1,\dots,U_{k-1}\}}$ there are exactly $p(G,T)\alpha^G(T)$ many such ways to obtain a graph in $\cG_{n}^{k+1,\{U_1,\dots,U_{k}\}}$, and in the other one, exactly $\lambda m(U_1,\dots,U_k) (n-\sum_{j=1}^{k} |U_j|)$ many ones.
Applying Theorem~\ref{thm:main_forests} twice with $\theta_k=\tilde{\theta}$ and $\delta=\tilde{\theta}/2$, if $\zeta$ is small enough and $n$ is large enough, then if $\cG$ is $\zeta$-tight, we obtain
\begin{align*}
&\frac{  |\cG_{n}^{k+1,\{U_1,\dots,U_{k}\}}|  }{|\cG_n|}
  \leq \frac{1}{|\cG_n|}\sum_{\beta\in \Xi(\delta,\ell)}  |\cG_{n,\beta}^{k+1,\{U_1,\dots,U_{k}\}}|(1+ \tilde{\theta})\\
 &\;\;\;\leq  \frac{1}{|\cG_n| m(U_1,\dots,U_k)}\sum_{\beta\in \Xi(\tilde{\theta}, \ell)}  \sum_{G\in \cG_{n,\beta}^{k,\{U_1,\dots,U_{k-1}\}}} \frac{p(G,T) \alpha^G(T)}{(n-\sum_{j=1}^{k-1} |U_j|) \lambda}(1+ \tilde{\theta} )\\
 &\;\;\;=  \frac{1}{|\cG_n| m(U_1,\dots,U_k)}\sum_{\beta\in \Xi(\tilde{\theta}, \ell)}  |\cG_{n,\beta}^{k,\{U_1,\dots,U_{k-1}\}}|\frac{p(\cG_{n,\beta}^{k,\{U_1,\dots,U_{k-1}\}},T) \beta(T)}{(n-\sum_{j=1}^{k-1} |U_j|) \lambda}(1+ \tilde{\theta} )\\
&\;\;\;\leq \frac{1}{ m(U_1,\dots,U_k)}\cdot\frac{ |\cG_{n}^{k,\{U_1,\dots,U_{k-1}\}}|}{|\cG_n|}\cdot\frac{p(\cG'_n,T)}{\lambda} \left(\frac{ e^{-|T|}}{\Aut_r(T)}+\tilde{\theta}\right) (1+ \tilde{\theta})\\
 &\;\;\;\leq \frac{1}{  m(U_1,\dots,U_k)}\left(e^{-1/2}\frac{e^{-\sum_{i=1}^{k-1} |U_i|}}{\Aut_u(U_1,\dots,U_{k-1})}+\tilde{\theta}\right)  p(\cG'_n,T)\frac{ e^{-|U_k|}}{\Aut_u(U_k)} (1+ 3\tilde{\theta})\\
 &\;\;\;\leq e^{-1/2}\frac{e^{-\sum_{i=1}^{k} |U_i|}}{\Aut_u(U_1,\dots,U_{k})}  \cdot p(\cG'_n,T)(1 + 5\tilde{\theta})\;.
\end{align*}
where $\cG'_n$ is the class formed by the union of $\cG_{n,\beta}^{k,\{U_1,\dots,U_{k-1}\}}$ for $\beta\in \Xi(\delta,\ell)$. In the previous inequalities we have used that  $\Aut_u(U_k)=\lambda\Aut_r(T)$  and~\eqref{eq:more_autos}.

Combining these two expressions and  since $|\cG'_n| \geq (1-\tilde{\theta}) |\cG_{n}^{k,\{U_1,\dots,U_{k-1}\}}|$ (by Theorem~\ref{thm:main_forests}), we obtain that for every rooted tree $T\in\cT_{\leq \ell}$, 
\begin{align}\label{eq:removable:local}
p(\cG_{n}^{k,\{U_1,\dots,U_{k-1}\}},T) \geq 1- 8\tilde{\theta}\;.
\end{align}
Now we set  $\tilde{\theta}:=\theta {\ell}^{-(\ell^2+1)}/10$.  Applying Theorem~\ref{thm:main_forests} for every $k\leq \ell$, $\theta_k=\tilde{\theta}$ and $U_1,\dots,U_k$, and using the definition of $\ell$
\begin{align}\label{eq:most in t*}
\sum_{k=0}^{\ell} \sum_{U_1,\dots,U_k\in \cU_{\leq \ell}} \frac{|\cG_n^{k+1,\{U_1,\dots,U_k\}}|}{|\cG_n|} &=e^{-1/2} \sum_{k=0}^{\ell} \sum_{U_1,\dots,U_k\in \cU_{\leq \ell}} \frac{e^{-\sum_{i=1}^k |U_i|}}{\Aut_u(U_1,\dots,U_k)} - \tilde{\theta} \ell (\ell^{\ell})^{\ell}  \nonumber\\
&\geq 1-\frac{\theta}{5}\;,
\end{align}
By averaging~\eqref{eq:removable:local} over all $k$ and $U_1,\dots,U_{k-1}$ and using  the last equation, for every $T\in \cT_{\leq \ell}$, we obtain
\begin{align}\label{eq:removable}
p(\cG_n,T) \geq 1- \theta/4\;,
\end{align}
which proves the first part.
\vspace{0.2cm}

Let us now show that there are many removable edges that isolate a tree of size at most $\ell$. Choose $G_n$ uniformly at random from $\cG_n$ and then choose $V_n$ uniformly at random from $V(G_n)$.  Let  $A_1$ be the event that  $V_n$ is connected to the bulk of $G_n$ through a removable cut-edge and let $A_2$ be the event that the pendant tree rooted at $V_n$ has order at most $\ell$. We want to show that $\Pr(A_1\cap A_2)\geq 1-\theta$.

Again, by applying Theorem~\ref{thm:main_forests} for every $k\leq \ell$, $\theta_k=\tilde{\theta}$ and $U_1,\dots,U_k$, and using~\eqref{eq:most in t*}, we obtain
\begin{align*}
\sum_{\beta\in\Xi(\delta,\ell)}|\cG_{n,\beta}|&\geq \sum_{k=0}^{\ell} \sum_{U_1,\dots,U_k\in \cU_{\leq \ell}}\sum_{\beta\in\Xi(\delta,\ell)} |\cG_{n,\beta}^{k+1,\{U_1,\dots,U_k\}}| \\
&\geq \sum_{k=0}^{\ell} \sum_{U_1,\dots,U_k\in \cU_{\leq \ell}} |\cG_{n}^{k+1,\{U_1,\dots,U_k\}}| - \tilde{\theta} \ell(\ell^{\ell})^{\ell}\\
&\geq (1-\theta/4)  |\cG_{n}|\;.
\end{align*}
Moreover, for every $\beta\in\Xi(\delta,\ell)$ and by our choice of $\ell$, we have that $\sum_{T\in\cT_{\leq \ell}} \frac{\beta(T)}{n} \geq \sum_{T\in\cT_{\leq \ell}}  \frac{e^{|T|}}{\Aut_r(T)} - \delta \ell^{\ell}\geq 1-\theta/5$. It follows that $\Pr(A_2)\geq 1-\theta/2$.

Moreover, the probability of $A_1$ given $A_2$ can be written as a convex combination of $p(\cG_n,T)$ with $T\in \cT_{\leq \ell}$. Therefore, by~\eqref{eq:removable}, we have that $\Pr(A_1\mid A_2)\geq 1-\theta/4$.

We conclude that
\begin{align}\label{eq:first_conclusion}
\Pr( A_1\cap A_2)= 1-\Pr( \overline{A_1}\cup \overline{A_2})\geq 1-(\Pr(\overline{A_2}) +\Pr(\overline{A_1}\mid A_2)) \geq 1-3\theta/4\;,
\end{align}
which concludes the proof of the theorem when all graphs in $\mathcal{G}$ are forests.
\medskip

In order to extend the result to general classes of graphs, we use the approach introduced in~\cite{BBG}. For such a purpose, let $\cG_n$ be a general class of graphs.

Let $\cH^{[1]}_n, \cH^{[2]}_n,\dots$ be the partition of $\cG_n$ into subclasses defined at the beginning of this section. Given $\zeta_0$ (to be fixed later), we let $S_n=S_n(\zeta_0)$ be the set of indices given by Lemma~\ref{lemma:tightsubclasses}, and we fix an index $i\in S_n$. We let  $\cH:=\cH^{[i]}_n$ be the corresponding subclass of $\cG_n$ and we let $\cF_{\cH}$ be the corresponding class of forests. We observe that $\cF_{\cH}$ is $\zeta_0$-tight and bridge-addable.

Since Lemma~\ref{lem:remove_cuts} holds for classes of forests, we can apply it to $\cF_\cH$. Note that if a cut-edge is removable for a forest $F_H\in\cF_\cH$, then the edge does not belong to any of the $2$-blocks of the corresponding graph $H\in \cH$. This implies that this cut-edge is also removable for $H\in \cH$. Moreover, if its removal in $F_H$ results in a tree of size at most $\ell$, then its removal in $H$ results in a \emph{graph} of size at most $\ell$. Therefore, the result obtained in~\eqref{eq:first_conclusion} for $\cF_\cH$ naturally transfers to the class $\cH$, provided we change ``trees'' by ``graphs'' in what results after deleting a removable edge.

Moreover, if we choose $\zeta_0$ small enough with respect to $\theta$, then there exists $\zeta$ such that if $
\cG$ is $\zeta$-tight and $n$ is large enough, by~\eqref{eq:Hi_tight2}, at least $(1-\theta/4)|\cG_n|$ graphs in $\cG_n$ are in subclasses $\cH^{[i]}_n$ with $i\in S_n$, concluding that the lemma also holds for general classes of graphs $\cG_n$. 
\end{proof}

 A direct consequence of the previous lemma is that, for most of the graphs in $\cG_n$, most of their vertices are in $2$-blocks of size $1$.
Indeed, if a graph with $n$ vertices has $(1-\theta')n$ cut-edges for some $\theta'>0$, it has at least $(1-2\theta')n$ $2$-blocks of size one by an easy counting argument.

Our next goal is to use this observation to show that not only the pendant graphs obtained when deleting a removable edge have bounded size, as Lemma~\ref{lem:remove_cuts} ensures, but, in fact, they are pendant trees.
For every class $\cG_n$ and every $t\geq 1$, if $G_n$  is chosen uniformly at random from $\cG_n$ and $V_n$ is chosen uniformly at random from $V(G_n)$, then let $q(\cG_n,t)$ be the probability that $V_n$ is connected to the bulk of $G_n$ through a removable cut-edge and the corresponding pendant graph is a tree of order at most $t$. Observe that if $\cG$ is a subclass of forests, Lemma~\ref{lem:remove_cuts} implies that for every $\theta$, and under some conditions, 
there exists an $\ell$ such that $q(\cG_n,\ell)\geq 1-\theta$. Next lemma shows that the same holds for general classes of graphs.

\begin{lemma}\label{lem:trees hanging}
For every $\vartheta$, there exist a $\zeta$ and a $t$, such that if $\cG$ is a $\zeta$-tight bridge-addable class and $n$ is large enough, then 

$q(\cG_n,t)\geq 1-\vartheta$.
\end{lemma}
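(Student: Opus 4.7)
The plan is to combine Lemma~\ref{lem:remove_cuts}, which produces a removable cut-edge with pendant graph of bounded size, with a bound on the fraction of vertices lying in non-trivial $2$-blocks, in order to upgrade the pendant \emph{graph} to a pendant \emph{tree}. First I would apply Lemma~\ref{lem:remove_cuts} with parameter $\vartheta/3$ to get $\zeta_1$ and $\ell$ such that, with probability at least $1-\vartheta/3$ over $(G_n,V_n)$, the vertex $V_n$ is connected to the bulk via a removable cut-edge whose pendant graph $P_{V_n}$ has order at most $\ell$. Next I would partition $\cG_n=\biguplus_i\cH^{[i]}_n$ according to the $2$-block structure and invoke Lemma~\ref{lemma:tightsubclasses} with $\zeta_0$ chosen small with respect to $\vartheta$, so that up to a further $\vartheta/10$-fraction of graphs the relevant subclass $\cH$ is itself $\zeta_0$-tight; the forest class $\cF_\cH$ is then also $\zeta_0$-tight and bridge-addable. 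Recalling the transfer through $\cF_\cH$ from the proof of Lemma~\ref{lem:remove_cuts}, $P^H_{V_n}$ has the same vertex set as the pendant tree $P^{F_H}_{V_n}$ of $V_n$ in $F_H$, and $P^H_{V_n}$ is a tree if and only if this common vertex set is disjoint from $V_{\geq 2}(\cH)$, the set of vertices lying in a $2$-block of $\cH$ of size at least two.

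The core step is to show that $|V_{\geq 2}(\cH)|/n \leq \mu$ for most tight subclasses $\cH$, where $\mu := \vartheta/(4\ell^2)$. This is precisely where the ``easy counting argument'' recalled in the excerpt applies: if $H \in \cH$ has at least $(1-\theta')n$ cut-edges, then $|V_{\geq 2}(\cH)| \leq 2\theta'n$. To establish the cut-edge lower bound, I would use the identity (valid since $F_H$ replaces each non-trivial $2$-block of size $|B|$ by a spanning tree with $|B|-1$ bridges)
\[
\#\{\text{cut-edges of }H\}\ =\ (n - c(F_H))\ -\ \bigl(|V_{\geq 2}(\cH)|-k_{\geq 2}(\cH)\bigr),
\]
combined with Theorem~\ref{thm:components} applied to $\cF_\cH$ (which forces $c(F_H)$ to be bounded in probability), and with an averaging/stability argument analogous to those of Section~\ref{sec:trees}: any non-negligible fraction of tight subclasses with $|V_{\geq 2}(\cH)|=\Omega(n)$ would, through the identity above, produce a deficit of bridges of $H$ relative to $F_H$, which would in turn contradict a quantitative strengthening of Lemma~\ref{lem:remove_cuts} asserting that a typical vertex's entire pendant tree in $F_H$ is composed of bridges of $H$ (and not of spanning-tree edges of non-trivial $2$-blocks). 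Equivalently, for most tight $\cH$, most bridges of $F_H$ must in fact be bridges of $H$, which forces $|V_{\geq 2}(\cH)|$ to be small.

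Finally I would conclude by a first-moment bound. For any fixed $u \in V_{\geq 2}(\cH)$, the cut-edges of $F_H$ whose small side contains $u$ and has size at most $\ell$ form a nested chain of length at most $\ell$ (since their small sides are nested connected subtrees through $u$, and each step decreases the side size by at least one while staying above $n-\ell$ on the complementary rooted subtree), and each such cut-edge's small side contains at most $\ell$ vertices; hence $\#\{v : u \in P^{F_H}_v,\, |P^{F_H}_v|\leq\ell\}\leq \ell^2$. Summing over $u \in V_{\geq 2}(\cH)$ and dividing by $n$ yields
\[
\Pr\bigl(P^{F_H}_{V_n}\cap V_{\geq 2}(\cH)\neq \emptyset,\, |P^{F_H}_{V_n}|\leq \ell\bigr)\ \leq\ \frac{\ell^2\,|V_{\geq 2}(\cH)|}{n}\ \leq\ \ell^2\mu\ =\ \vartheta/4,
\]
and a union bound with the earlier reductions delivers $q(\cG_n,\ell)\geq 1-\vartheta$, so the lemma will hold with $t:=\ell$. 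The principal difficulty lies in the middle paragraph: turning the trivial incidence-counting bound ``$\geq(1-\vartheta)n/2$ cut-edges'' coming from Lemma~\ref{lem:remove_cuts} into the sharp ``$\geq(1-o(1))n$ cut-edges'' that activates the counting hint, which requires exploiting the full pendant-tree structure of Lemma~\ref{lem:remove_cuts} rather than only the existence of a single incident cut-edge at each vertex.
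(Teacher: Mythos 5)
Your plan has the right shape at the edges --- apply Lemma~\ref{lem:remove_cuts} via the forest transfer $\cF_\cH$ and Lemma~\ref{lemma:tightsubclasses}, and finish with a double-counting/first-moment bound over the offending vertices --- but the middle step, which you yourself flag as ``the principal difficulty,'' is a genuine gap, and the route you sketch to close it is not actually a route. You propose to bound $|V_{\geq 2}(\cH)|/n$ by showing that $H$ has $(1-o(1))n$ cut-edges, invoking an unstated ``quantitative strengthening of Lemma~\ref{lem:remove_cuts}'' asserting that a typical vertex's whole pendant tree in $F_H$ consists of bridges of $H$. But this strengthening is essentially the content of the lemma being proved (that the pendant object is a tree of $H$, not just a tree of $F_H$); invoking it would be circular, and the ``averaging/stability argument analogous to Section~\ref{sec:trees}'' does not by itself supply it --- Theorem~\ref{thm:components} applied to $\cF_\cH$ controls the number of components but says nothing about which bridges of $F_H$ are bridges of $H$.

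The paper's proof avoids this entirely by a short structural observation that sidesteps any global cut-edge count. Write $A_1$ for the event that $V_n$ is connected to the bulk by a removable cut-edge, $A_2$ for the event that the pendant graph $X_{G_n}(V_n)$ has order at most $t$, and $A_3$ for the event that it is a tree. The key point is that if $A_2\cap\overline{A_3}$ holds then $X_{G_n}(V_n)$ contains a nontrivial $2$-block, and any non-articulation vertex $V'_n$ of that block is \emph{not connected to the bulk through any cut-edge} (its two block-edges towards the bulk are not cut-edges, and $X_{G_n}(V_n)$ is too small to contain a large component). So $\overline{A_1}$ holds for $V'_n$, which is within distance $t$ of $V_n$; double-counting pairs $(V_n,V'_n)$ --- with exactly the nested-chain bound you use in your last step --- gives $\Pr(A_2\cap\overline{A_3}) \le t\,\Pr(\overline{A_1})$. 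Now $\Pr(\overline{A_1})$ is already controlled by Lemma~\ref{lem:remove_cuts} applied with the smaller parameter $\vartheta/(7t)$ (plus the $\cH$-partition and forest transfer), so the proof closes without ever needing a bound on $|V_{\geq 2}(\cH)|/n$ or a cut-edge count for $H$. In short: you correctly identified the hard part and the right finishing combinatorics, but the right key idea is not ``make the cut-edge count sharp''; it is ``a non-tree small pendant graph forces a vertex in $\overline{A_1}$.''
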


\begin{proof}
Given $G\in \cG_n$ and a vertex $v\in V(G)$ that is connected to the bulk of $G$ through a cut-edge $e$, we denote by $X_G(v)$ the pendant graph (containing $v$) obtained when deleting $e$ from~$G$. 
Given $G_n$ chosen uniformly at random from $\cG_n$ and $V_n$ chosen uniformly at random from $V(G_n)$, as before, we define $A_1$ as the event that $V_n$ is connected to the bulk of $G_n$ through a removable cut-edge and  $A_2$ as the event that $X_{G_n}(V_n)$ has order at most $t$. Also, let $A_3$ be the event that $X_{G_n}(V_n)$ is a tree. 
It is implicit it the definition of $A_2$ and $A_3$ that $V_n$ should be connected to the bulk of $G_n$ through a cut-edge, so in particular $X_{G_n}(V_n)$ is well-defined.
Note that:
\begin{align}\nonumber
q(\cG_n,t)&=\Pr(A_1\cap A_2\cap A_3)= \Pr(A_1\cap A_2) -\Pr(A_1\cap A_2\cap\overline{A_3}) \\
&\geq \Pr(A_1\cap A_2) -\Pr( A_2\cap\overline{A_3})\;,\label{eq:splitProba}
\end{align}
so we will proceed by bounding the last two probabilities.

Here we consider again the partition of $\cG_n$ into subclasses $\cH^{[1]}_n, \cH^{[2]}_n,\dots$ defined above. Given $\zeta_0$ (to be fixed later), there exists a $\zeta$ such that for every $\zeta$-tight class $\cG$, if $n$ is large enough, we can consider $S_n=S_n(\zeta_0)$ to be the set of indices given by Lemma~\ref{lemma:tightsubclasses}. We let $\cH:=\cH^{[i]}_n$ be the corresponding subclass of $\cG_n$, for some $i\in S_n$, and $\cF_{\cH}$ be corresponding class of forests.

By Lemma~\ref{lem:remove_cuts} with $\theta=\vartheta/3$, if $\zeta_0$ is small enough and, $n$ and $t$ are large enough, since $\cH$ is a $\zeta_0$-tight bridge-addable class of graphs with $n$ vertices, then the probability that a uniformly chosen vertex $W_n$ from a uniformly chosen forest $F_n$ in $\cF_{\cH}$ connects to the bulk of $F_n$ through a removable cut-edge and that $X_{F_n}(W_n)$ is a tree of order at most $t$, is at least $1-\vartheta/3$. If this is the case, as we argued before, this edge is also a removable cut-edge in the graph in $\cH$ that corresponds to $F_n$.
Thus, using~\eqref{eq:Hi_tight2} and provided that $\zeta_0$ is small enough with respect to $\vartheta$, we can lower bound the first probability in~\eqref{eq:splitProba} as follows
 \begin{align}\label{eq:A_1 A_2}
\Pr(A_1\cap A_2)\geq 1-\frac{\vartheta}{3}-\zeta_0\geq 1-\frac{\vartheta}{2}\;.
\end{align} 

It remains to obtain an upper bound on $\Pr(A_2\cap \overline{A_3})$.
For this we first observe that, by using Lemma~\ref{lem:remove_cuts} again with $\theta_2=\frac{\vartheta}{7t}$, and if $\zeta_0$ is small enough, and, $n$ and $\ell$ are large enough, since $\cH$ is $\zeta_0$-tight, then the probability that a uniformly chosen vertex $W_n$ from a uniformly chosen forest $F_n$ in $\cF_{\cH}$ is connected to the bulk of $F_n$ through a removable cut-edge, is at least 
$1-\frac{\vartheta}{7t}$.
Using \eqref{eq:Hi_tight2} again and provided that $\zeta_0$ is small enoughwith respect to $\vartheta$ and $t$, we obtain 
\begin{align}\label{eq:A1}
\Pr(\overline{A_1})\leq \frac{\vartheta}{7t}+\zeta_0
\leq \frac{\vartheta}{6t}\;.
\end{align} 
 We claim that
\begin{align}\label{eq:claimProba123}
\Pr(A_2\cap \overline{A_3})
\leq t \Pr(\overline{A_1})
\;.
 \end{align}
Assuming that~\eqref{eq:claimProba123} holds, together with~\eqref{eq:A_1 A_2} and with~\eqref{eq:A1}, we can now control each term in~\eqref{eq:splitProba} to obtain
$$
q(\cG_n,t)  \geq  1-\frac{\vartheta}{2} - \frac{\vartheta}{6}  \geq 1-\vartheta\;.
$$

Thus, it only remains to prove~\eqref{eq:claimProba123}. For this we observe that if $A_2\cap \overline{A_3}$ holds, then $X_{G_n}(V_n)$ contains at least one vertex $V'_n$ which is not connected to the bulk of $G_n$ through a cut-edge (since $X_{G_n}(V_n)$ is a well-defined pendant graph, but it is not a tree). Moreover since $A_2$ holds, the graph distance between $V_n$ and $V'_n$ is less than $t$.
Conversely, it is easy to see that given any vertex $v'$, there are at most $t$ vertices $v$ at distance at less than $t$ from $v'$ that are connected to the bulk of $G_n$ through a cut-edge and such that $X_{G_n}(v)$ contains $v'$.
The inequality~\eqref{eq:claimProba123} thus follows by double-counting such pairs of vertices.
\end{proof}

\subsection{Proof of Theorem~\ref{thm:main}}
\label{subsec:conclude}

We finally present the proof of our main theorem.
\begin{proof}[Proof of Theorem~\ref{thm:main}]
Let us first prove $i)$. We will first prove that for every $k$, every $\theta$ and every $U_1,\dots, U_k$, and if $\zeta$ is small enough and $n$ large enough, then for every $\zeta$-tight bridge-addable class $\cG$, we have
\begin{align}\label{eq:main2}
 \left| \frac{ \left| \cG_n^{k+1,\{U_1,\dots,U_k\}} \right|}{|\cG_n|}- e^{-1/2}\frac{e^{-\sum_{i=1}^k |U_i|}}{\Aut_u(U_1,\dots,U_k)}\right|< \theta\;.
\end{align}

As before we consider the partition of $\cG_n$ into subclasses  $\cH^{[1]}_n, \cH^{[2]}_n,\dots$. Given $\zeta_0$ (to be fixed later), there exists a $\zeta$ such that for every $\zeta$-tight class $\cG$, if $n$ is large enough, we can consider the set $S_n=S_n(\zeta_0)$ given by Lemma~\ref{lemma:tightsubclasses}.

Let $\cH:=\cH^{[i]}_n$ for $i \in S_n$ and let $\cF_{\cH}$ be the corresponding $\zeta_0$-tight class of forests. 
We can apply Theorem~\ref{thm:main_forests} for the given $k$, $\theta_k=\frac{\theta}{4}$, and the given $U_1,\dots,U_k$. If $\zeta_0$ is small enough and $n$ is large enough, and since $\cF_\cH$ is $\zeta_0$-tight,~\eqref{eq:main2} holds for $\cF_\cH$. 

It follows that
\begin{align*}
\left| \cG_n^{k+1,\{U_1,\dots,U_k\}} \right|
& =  \sum_{j\in S_n} \left| (\cH^{[j]}_n)^{k+1,\{U_1,\dots,U_k\}} \right| \pm \zeta_0|\cG_n| \nonumber\\
& =  \sum_{j\in S_n} \left| \cF_{{\cH^{[j]}_n}}^{k+1,\{U_1,\dots,U_k\}} \right| \pm \zeta_0|\cG_n| \nonumber\\
& =  \left( e^{-1/2}\frac{e^{-\sum_{i=1}^k |U_i|}}{\Aut_u(U_1,\dots,U_k)} \pm\theta_k\right)\sum_{j\in S_n} |\cF_{{\cH^{[j]}_n}}| \pm \zeta_0|\cG_n|\nonumber \\
& = \left( e^{-1/2}\frac{e^{-\sum_{i=1}^k |U_i|}}{\Aut_u(U_1,\dots,U_k)} \pm\theta_k\right)(1\pm \zeta_0)|\cG_n| \pm \zeta_0|\cG_n|\nonumber \\
&=   \left( e^{-1/2}\frac{e^{-\sum_{i=1}^k |U_i|}}{\Aut_u(U_1,\dots,U_k)} \pm\theta\right) |\cG_n| \;,
\end{align*}
provided that $\zeta_0$ is small enough with respect to $\theta$. This proves~\eqref{eq:main2}. 

To prove the first part of the theorem, let $k_*$ be large enough such that 
\begin{align}\label{eq:choice_of_ell}
e^{-1/2} \sum_{k=0}^{k_*} \sum_{\{U_1,\dots,U_k\}\in \cU_{\leq k_*}} \frac{e^{-\sum_{i=1}^k |U_i|}}{\Aut_u(U_1,\dots,U_k)} &\geq 1-\frac{\epsilon}{4}\;.
\end{align}
The existence of such a $k_*$ is, again, guaranteed by Lemma~\ref{lemma:evalGF}.

If ${\bf f}$ is an unrooted unlabeled forest composed by trees $U_1,\dots, U_k$, then 
$$
\Pr(\sma(G_n)\equiv{\bf f}) = \frac{\left| \cG_n^{k+1,\{U_1,\dots,U_k\}} \right|}{|\cG_n|}\;.
$$
We choose $\theta:=\epsilon k_*^{-k_*^2}/2$. 

Let ${\bf f_1}$ be a forest composed by at most $k_*$ trees of size at most $k_*$, then~\eqref{eq:main2} gives that $|\Pr(\sma(G_n)\equiv{\bf f_1}) -p_\infty({\bf f_1})|<\epsilon$. 

Let ${\bf f_2}$ be a forest with either more than $k_*$ trees or where at least one of the trees has size larger than $k_*$.
Since $p_\infty$ is a probability distribution, by~\eqref{eq:choice_of_ell} we have $p_\infty({\bf f_2})\leq \epsilon/4$. Since $\sum_{\bf f} \Pr(\sma(G_n)\equiv{\bf f})=1$, using again~\eqref{eq:main2} and~\eqref{eq:choice_of_ell}, we have
\begin{align*}
|\Pr(\sma(G_n)\equiv{\bf f_2}) -p_\infty({\bf f_2})|&\leq  \Pr(\sma(G_n)\equiv{\bf f_2}) +p_\infty({\bf f_2})\\
&\leq  1- \sum_{k=0}^{k_*} \sum_{\{U_1,\dots,U_k\}\in \cU_{\leq k_*}}  \frac{\left| \cG_n^{k+1,\{U_1,\dots,U_k\}} \right|}{\left|\cG_n \right|}+p_\infty({\bf f_2})\\
&\leq \epsilon/4 + \theta k_*^{k_*^2} + \epsilon/4=\epsilon
\end{align*}
This concludes the proof of $i)$.
\vspace{0.3cm}

Let us now prove the following property from which $ii)$ follows directly. 
\begin{enumerate}
\item[\emph{iii)}]~\textit{for every $\epsilon, \eta$, there exists $\zeta$ such that for every $\zeta$-tight bridge-addable class $\cG$ and every $n$ large enough, if ${\bf f}$ is a fixed unrooted unlabeled forest, we have:}
\vspace{-4mm}
$$ 
\left|\Pr\left(\sma(G_n)\equiv {\bf f};\ 
\forall T \in \mathcal{T}: \ 
\left|\frac{\alpha^{G_n}(T)}{n}-a_\infty(T)\right|<\eta
\right)
-
p_\infty({\bf f})
\right|
<\epsilon.
$$
\end{enumerate}

\noindent Similarly as before, we first prove that for every $\theta,\eta$, $k,\ell$ and $U_1,\dots,U_k$, and provided that $\zeta$ is small enough and $n$ large enough, we have
\begin{align}\label{eq:this_one} 
\frac{\sum_{\beta\in \Xi(\eta,\ell)}\left| \cG_{n,\beta}^{k+1,\{U_1,\dots,U_k\}} \right|}{|\cG_{n}^{k+1,\{U_1,\dots,U_k\} } |}&\geq  1 - \theta \;.
\end{align}

Recall the partition of $\cG_n$ into subclasses  $\cH^{[1]}_n, \cH^{[2]}_n,\dots$ . As before, let $\cH:=\cH^{[i]}_n$ for $i \in S_n$ and let $\cF_{\cH}$ be the corresponding class of forests. If we apply the second part of Theorem~\ref{thm:main_forests} with $\theta_k=\theta/4$ and $\delta=\eta/2$ to the class $\cF_\cH$, if $\zeta_0$ is small enough and $n$ large enough, then at least 
$(1-\theta/2)|\cH^{k+1,\{U_1,\dots,U_k\}}|$ graphs $G\in \cH^{k+1,\{U_1,\dots,U_k\}}$ satisfy $\alpha^{F_{G}}\in\Xi(\delta, \ell)$.

Theorem~\ref{thm:main_forests} also shows that there exists a constant $c_1>0$ such that $|\cH^{k+1,\{U_1,\dots,U_k\} } |\geq c_1 |\cH|$. By Lemma~\ref{lem:trees hanging} with $\vartheta:=c_1\min\{\theta/4,\delta\}$, if $\zeta_0$ is small enough and $n$ large enough there exists a $t$ such that with probability at least $1-\vartheta$, a random vertex  in a random graph of $\cH$ is connected via a removable cut-edge and the corresponding pending graph is a tree of order at most $t$. We can choose $t\geq \ell$. (Note that by doing so, we only increase the former probability.) 

Therefore, if $H_n$ is a random graph in $\cH^{k+1,\{U_1,\dots,U_k\} } $, with probability at least $1-\theta/2-\vartheta/c_1 >1-3\theta/4$, for every $T\in \cT_{\leq \ell}$,
$$
\frac{\alpha^{H_n}(T)}{n}=\frac{e^{-|T|}}{\Aut_r(T)}\pm\delta\pm \vartheta = \frac{e^{-|T|}}{\Aut_r(T)}\pm 2\delta\;.
$$
In other words, with probability at least $1-3\theta/4$, we have $\alpha^{H_n}\in \Xi(2\delta, \ell)=\Xi(\eta, \ell)$.

By $i)$, we have that $|\cG_{n}^{k+1,\{U_1,\dots,U_k\} } |\geq c_2 |\cG_n|$, for some constant $c_2>0$. Therefore, there are at most $\frac{\zeta_0}{c_2}|\cG_{n}^{k+1,\{U_1,\dots,U_k\}}|$ graphs in classes ${\cH^{[i]}_n}$ that are not $\zeta_0$-tight. We conclude that, provided $\zeta_0$ is small enough, the probability that a graph $G'_n$ chosen at random from $\cG_{n}^{k+1,\{U_1,\dots,U_k\}}$ satisfies
$\alpha^{G'_n}\in\Xi(\eta, \ell)$, is at least $1-3\theta/4-\zeta_0/c_2>1-\theta$. This proves~\eqref{eq:this_one}.

Let $A(k,\nu)$ the event that for every $T\in T_{\leq k}$ we have $\left|\frac{\alpha^{G_n}(T)}{n}-a_\infty(T)\right|<\nu$ (we might write $k=\infty$ where $\cT_{\leq \infty}=\cT$).

Since we have already proved $i)$, we have that for every unrooted unlabeled  forest  ${\bf f}$ with small components $U_1,\dots,U_k$, then
\begin{align}\label{eq:first}
\Pr\left(A(\infty,\eta), \sma(G_n)\equiv {\bf f} \right)& \leq \Pr(\sma(G_n)\equiv {\bf f} )\leq  p_\infty({\bf f})+\epsilon\;.
\end{align}
By Lemma~\ref{lemma:evalGF}, if $k_*$ is large enough, then
$$
\sum_{T\in \cT_{\leq k_*}} \frac{e^{-|T|}}{\Aut_r(T)}> 1-\frac{\eta}{4}\;.
$$

Let $T'\notin \cT_{\leq k_*}$ and choose  $\rho= \eta k_*^{-k_*}/4$. As before, by the properties of $k_*$ we have that $a_\infty(T')\leq \eta/4$  and, conditional on $A(k_*,\rho)$,  $\frac{\alpha^{G_n}(T')}{n}\leq \eta/4 + \rho k_*^{k_*}= \eta/2$.
This implies that, conditional on $A(k_*,\rho)$, then $A(k_*,\eta)$ implies  $A(\infty,\eta)$.

If ${\bf f}$ is an unrooted unlabeled  forest with small components $U_1,\dots,U_k$, using~\eqref{eq:this_one}, we have that for every $\theta$,
\begin{align*}
&P\left(A(\infty,\eta)\mid \sma(G_n)\equiv {\bf f} \right)\\
& \geq P\left(A(\infty,\eta)\mid \sma(G_n)\equiv {\bf f},A(k_*,\rho) \right)  \cdot P\left(A(k_*,\rho)\mid \sma(G_n)\equiv {\bf f}\right)\\
& \geq P\left(A(k_*,\eta)\mid \sma(G_n)\equiv {\bf f},A(k_*,\rho) \right)  \cdot P\left(A(k_*,\rho)\mid \sma(G_n)\equiv {\bf f}\right)\\
&=\frac{\sum_{\beta\in \Xi(\eta,k_*)} |\cG_{n,\beta}^{k+1,\{U_1,\dots,U_k\}}|}{\sum_{\beta\in \Xi(\rho,k_*)} |\cG_{n,\beta}^{k+1,\{U_1,\dots,U_k\}}|} \cdot \frac{\sum_{\beta\in \Xi(\rho,k_*)} |\cG_{n,\beta}^{k+1,\{U_1,\dots,U_k\}}|}{|\cG_{n}^{k+1,\{U_1,\dots,U_k\}}|}\\
&=\frac{\sum_{\beta\in \Xi(\eta,k_*)} |\cG_{n,\beta}^{k+1,\{U_1,\dots,U_k\}}|}{|\cG_{n}^{k+1,\{U_1,\dots,U_k\}}|}\\
&\geq1-\theta
\end{align*}
By $i)$, we may assume that $\Pr(\sma(G_n)\equiv {\bf f})\geq p_\infty({\bf f})-\epsilon/2$. Choosing $\theta:=\epsilon/2$, we conclude 
\begin{align*}
P\left(A(\infty,\eta), \sma(G_n)\equiv {\bf f} \right)
& = P\left(A(\infty,\eta)\mid \sma(G_n)\equiv {\bf f} \right) P\left(\sma(G_n)\equiv {\bf f} \right) \\
&\geq (1-\theta)(p_\infty({\bf f})-\epsilon/2)\geq p_\infty({\bf f})-\epsilon\;.
\end{align*}
Together with~\eqref{eq:first}, this proves \emph{iii)}, which directly proves \emph{ii)}.
\end{proof}
\label{pageMainbis}

\subsection{Proof of Corollary~\ref{cor:BS}}
\label{subsec:BS}

Corollary~\ref{cor:BS} is a simple consequence of our main result. Here we present a detailed proof.

\begin{proof}[Proof of Corollary~\ref{cor:BS}]

Let $G$ be a graph with $n$ vertices, let $v\in V(G)$  and $r\geq 1$. The \emph{ball} of radius~$r$ centered at $v$,  $B_{G,r}(v)$, is the graph induced in $G$ by all vertices at distance at most $r$ from $v$. 
The \emph{hull} of radius $r$, $H_{G,r}(v)$ is the union of $B_{G,r}(v)$ with all the connected components of $G\setminus B_{G,r}(v)$ that are of size smaller than~$\frac{n}{3}$, but are not components of $G$.
We view  
the hull $H_{G,r}(v)$ as a graph with a root (the vertex $v$) and a set, possibly empty,  of \emph{exit vertices} (the vertices to which component(s) of size larger than $\frac{n}{3}$ are attached). Note that the exit vertices are necessarily at distance $r$ from the root.
We extend the definition of hulls to infinite graphs, by replacing the condition ``size smaller than $\frac{n}{3}$'' by the condition ``finite size''.

For $k\geq 0$, let $\mathcal{T}_{r,k}$ be the set of (unlabeled) trees with a marked root, and $k$ marked distinct  vertices at distance $r$ from the root (exit vertices).
For $T\in \mathcal{T}_{r,k}$ and a rooted graph $(G,v)$, we write $H_{G,r}(v) \equiv T$ if the hull $H_{G,r}(v)$ is isomorphic to $T$ as an unlabeled graph, where the isomorphism preserves the root and the exit vertices (in particular this implies that $H_{G,r}(v)$ has $k$ exit vertices).
Then it is easy to see from the definition of $(F_\infty,V_\infty)$ that we have, for any $r\geq1, k\geq0$ and $T\in \cT_{r,k}$:
$$
\Pr \left( H_{F_\infty,r}(V_\infty) \equiv T\right)
=
q_\infty(T),
$$
where 
$$
q_\infty(T):=
\left\{
\begin{array}{rl}
\frac{1}{\Aut_{path}(T)}e^{-|T|} &\mbox{if } k=1
\\
0 &\mbox{if } k\neq 1,
\end{array} \right.
$$
where $\Aut_{path}(T)$ is the number of automorphisms of $T$ preserving the path from the root to the exit vertex. Moreover, for any $r\geq 1$ 
we have
\begin{align}
\label{eq:1endIsEnough}
\sum_{k\geq0}\sum_{T\in\mathcal{T}_{r,k}}  q_\infty(T) =1
.\end{align}

Let $r\geq 1$ and fix $T\in \mathcal{T}_{r,1}$, with root $u$ and exit vertex $w$. Let $T'$ be the element of $\mathcal{T}$ obtained by re-rooting the tree $T$ at $w$, and let $m$ be the number of copies of the vertex $u$ in $T'$. Then, clearly, there are \emph{at least}
$
m \alpha^{G}(T')
$
vertices $v$ in $V(G)$ such that $H_{G,r}(v) \equiv T$.
We thus have, 
$$\Pr \left(
H_{G,r}(V) \equiv T
\right) \geq \frac{m \alpha^{G}(T')}{n},$$ 
where $V$ is a uniform random vertex in $G$.
Now let $\cG$ be a tight bridge-addable graph class, and, for every $n\geq 1$, let $G_n$ be a uniform graph in $\cG_n$ and let $V_n$ be a uniform vertex in $G_n$. By averaging over graphs in $\cG_n$ and using the second part of Theorem~\ref{thm:tight} we obtain:
\begin{align}
\label{eq:limitFor1End}
\liminf_n \Pr \left(
H_{G_n,r}(V_n) \equiv T
\right)
\geq  \liminf_n \mathbf{E} \left(\frac{m \alpha^{G_n}(T')}{n}\right)
\geq m a_\infty(T') 
=q_\infty(T)
\end{align}
where for the last equality we used $ m \Aut_{path}(T) = \Aut_r(T')$.
Now since the events $H_{G_n,r}(V_n) \equiv T$ for $T\in \cup_{k\geq 0} \mathcal{T}_{r,k}$ are disjoint, we have:
$$
\sum_{k\geq 0} \sum_{T\in \mathcal{T}_{r,k}}
\Pr \left(
H_{G_n,r}(V_n) \equiv T
\right) 
\leq 1.
$$
From~\eqref{eq:1endIsEnough} and~\eqref{eq:limitFor1End} we thus get that, for any $r,k$ and $T\in \mathcal{T}_{r,k}$ we have: 
\begin{align}\label{eq:BShulls}
\lim_n \Pr \left(
H_{G_n,r}(V_n) \equiv T
\right)
=q_\infty(T).
\end{align}

The last equation implies that, for any rooted graph $B_0$ of radius $r$ (where the radius is the greatest distance from a vertex to the root), we have:
\begin{align}\label{eq:BSballs}
\lim_n\Pr \left(
B_{G_n,r}(V_n) \equiv B_0
\right)
=\Pr \left(
B_{F_\infty,r}(V_\infty) \equiv B_0
\right).
\end{align}
To see this, note that for every rooted graph $B$ we have:
\begin{align*}
\Pr \left(
B_{F_\infty,r}(V_\infty)\equiv B\right)=\sum_{k \geq 0} \sum_{T\in\mathcal{T}_{r,k}\atop T\triangleright B}\Pr \left(
H_{F_\infty,r}(V_\infty)\equiv T\right),
\end{align*}
where $T\triangleright B$ means that $B_{T,r}(v)\equiv B$, where $v$ is the root of $T$.

It follows from this equality that for any $B$, any $r\geq 1$ and any $\epsilon$,
we can choose a finite subset $\mathcal{T}'\subset \cup_{k\geq 0} \cT_{r,k}$ such that  $\sum_{T\in\mathcal{T}', T\triangleright B}\Pr \left(
H_{F_\infty,r}(V_\infty)\equiv T\right) \geq \Pr \left(
B_{F_\infty,r}(V_\infty)\equiv B\right)-\epsilon$.
Using~\eqref{eq:BShulls}, it follows that 
\begin{align*}
\liminf_n \Pr \left(
B_{G_n,r}(V_n) \equiv B
\right) 
&\geq \liminf_n
\sum_{T\in\mathcal{T}', T\triangleright B}\Pr \left(
H_{G_n,r}(V_n)\equiv T\right)\\
&
\geq 
\sum_{T\in\mathcal{T}', T\triangleright B}\Pr \left(
H_{F_\infty,r}(V_\infty)\equiv T\right)\\
&\geq 
\Pr \left(
B_{F_\infty,r}(V_\infty)\equiv B\right) -\epsilon.
\end{align*}
Since this is true for any $\epsilon>0$ we thus have proved 
\begin{align}\label{eq:BSliminf}
\liminf_n\Pr \left(
B_{G_n,r}(V_n) \equiv B
\right) 
\geq
\Pr \left(
B_{F_\infty,r}(V_\infty)\equiv B\right).
\end{align}
It follows that
$$
1\geq \liminf_n \sum_{B}\Pr \left(
B_{G_n,r}(V_n) \equiv B
\right)\geq  \sum_{B}\Pr \left(B_{F_\infty,r}(V_\infty)\equiv B\right)=1,
$$
where the sums are taken over all rooted graphs $B$ of radius $r$, and using~\eqref{eq:BSliminf}, Equation~\eqref{eq:BSballs} holds for every $B_0$. This concludes the proof.
\end{proof}

\bibliographystyle{alpha}
\bibliography{biblio}

\newpage
\appendix
\begin{center}
 -------------- APPENDIX --------------
\end{center}

\section{Generating functions of trees and forests}
\label{app:gf}

The purpose of this appendix is to recall some basic properties of uniform random forests and their generating functions, in particular proving Theorem~\ref{thmalpha:randomForests}  of the introduction and Lemma~\ref{lemma:evalGF} of Section~\ref{subsec:evalGF}. The material of Section~\ref{subapp:components} is a standard exercise 
in analytic combinatorics (for background, see the book~\cite{flajolet}) but we are not aware of a reference containing precisely these results, so we prefer to recall them here. Section~\ref{subapp:example} gives details about the example from Remark~\ref{rem:example} in the introduction.

\subsection{Small components and pendant trees in uniform forests}
\label{subapp:components}

For $n\geq 1$ we let $u_n, f_n$ be respectively the number of labeled trees and labeled forests on $n$ vertices. Thus $t_n:=nu_n$ is the number of labeled rooted trees on $n$ vertices.

We let 
$T(z)=\sum_{n\geq 1} \frac{t_n}{n!}z^n$,
$U(z)=\sum_{n\geq 1} \frac{u_n}{n!}z^n$,
$F(z)=\sum_{n\geq 0} \frac{f_n}{n!}z^n$,
be the exponential generating functions of labeled: rooted trees, unrooted trees, and forests, respectively. Note that the sum defining $F(z)$ starts at $n=0$ since we allow the empty forest in this discussion, and we set $f_0:=1$ accordingly.
Since each rooted tree can be uniquely transformed, by removing the root, into a set made of smaller rooted trees, we have using the classical dictionnary of symbolic combinatorics~\cite{flajolet}:
\begin{align}\label{appeq:T}
T(z)=z \exp T(z).
\end{align}
Note that the closed formula $t_n=n^{n-1}$ follows directly from the last equation and the Lagrange inversion formula.

In order to get an equation for the series $U(z)$, we note that any tree  
$\mathbf{t}$ satisfies the equation 1=\#vertices$(\mathbf{t})$-\#edges$(\mathbf{t})$. 
By summing this equation over all trees of given size, and observing that the generating function of trees rooted at an edge is given by $\frac{1}{2}T(z)^2$, we obtain the \emph{disymmetry equation} (see~\cite{quebecois}):
\begin{align}\label{appeq:U}
U(z)=T(z)-\frac{1}{2}T(z)^2.
\end{align}
Finally, since any forest can be viewed as a set of \emph{unrooted} trees, we have:
\begin{align}\label{appeq:F}
F(z) = \exp U(z).
\end{align}

\medskip

The above equations  express implicitly the functions $T(z),U(z),F(z)$ in terms of the variable $z$. In order to transform these implicit equations into asymptotic estimates, we will use so called \emph{transfer} theorems. These results (see \cite[Chap VI]{flajolet}) enable one to deduce precise estimates on the coefficients of a generating function given an expansion at the main singularity of this function on its disk of convergence.

First, it follows by \eqref{appeq:T} and the implicit function theorem (see also the general theorems in \cite[Chap VII]{flajolet}) that the function $T(z)$ has radius of convergence $e^{-1}$, has a unique dominant singularity at $z=e^{-1}$, and has an expansion of the form:
\begin{align}\label{appeq:Tdev}
T(z) = 1 + c \sqrt{1-ze} + c'(1-ze)+ O( (1-ze)^{3/2})
\end{align}
for some $c,c'\neq 0$, uniformly in a neighbourhood of $z=e^{-1}$ slit along the half-line $[e^{-1},\infty)$. Of course we could compute the value of $c$ and $c'$ by implicit differentiation, but this will not be needed.
It follows from~\eqref{appeq:U} that the series $U(z)$ has, uniformly in the same domain, an expansion of the form:
\begin{align}\label{appeq:Udev}
U(z) = \frac{1}{2} + c''(1-ze) + cc' (1-ze)^{3/2} + O((1-ze)^2),
\end{align}
and from~\eqref{appeq:F} we obtain the expansion of the function $F(z)$:
\begin{align}\label{appeq:Fdev}
F(z) = e^{1/2} + c'''(1-ze) + e^{1/2} cc' (1-ze)^{3/2} + O((1-ze)^2).
\end{align}
Note that Equations~\eqref{appeq:Tdev} and \eqref{appeq:Fdev} 
 imply Lemma~\ref{lemma:evalGF} stated in Section~\ref{subsec:evalGF}.

\medskip
We are now ready to reprove the result of R\'enyi about the probability that a forest is connected~\cite{Renyi}. Applying a standard transfer theorem for analytic functions~\cite{flajolet}, \eqref{appeq:Udev} implies that the number of (unrooted) trees of size $n$ satisfies, when $n$ goes to infinity:
$$
\frac{u_n}{n!}\sim \frac{cc'}{\Gamma(-3/2)} n^{-5/2} e^n. 
$$
Similarly, for $f_n$, \eqref{appeq:Fdev} implies:
\begin{align}\label{appeq:fn}
\frac{f_n}{n!}\sim \frac{cc'e^{1/2}}{\Gamma(-3/2)} n^{-5/2} e^n. 
\end{align}
from which we get Renyi's result:
$$
\frac{u_n}{f_n} \rightarrow e^{-1/2}.
$$
\medskip
More generally, fix a forest $U\in \cF$, \emph{possibly empty}, 
and let $F^U(z)$ be the generating function of all forests $\mathbf{f}$ such that $\mathop{Small}(\mathbf{f})\equiv U$. Since there are $\frac{|U|!}{\mathrm{Aut}(U)}$ labeled forests isomorphic to $U$, we have:
$$
F^U(z) = \frac{z^{|U|}}{\mathrm{Aut}(U)} U(z) + P_U(z),
$$
where $P_U$ is a \emph{polynomial} taking into account the cases of small forests containing $U$ as a subforest but in which the largest component is among the components of $U$.
The previous equation is also valid when $U$ is empty (with $\frac{z^{|U|}}{\mathrm{Aut}(U)}:=1$).

From our previous estimates, the function $F^U$ has an asymptotic expansion, uniformly in a neighbourhood of $e^{-1}$ slit along $[e^{-1},\infty)$ of the form:
\begin{align}\label{appeq:FUdev}
F^U(z) =c_4 + c_5(1-ze) + \frac{e^{-|U|}}{\mathrm{Aut}(U)} cc' (1-ze)^{3/2} + O((1-ze)^2).
\end{align}
(note that the unknown polynomial $P_U$ can contribute to the constant or linear term, but not the singular term of exponent $3/2$, which fortunately is the only one we are interested in for the analysis).
By standard transfer theorems, we get that when $n$ goes to infinity the coefficient of $z^n$ in this function is equivalent to:
$$
\frac{f^U_n}{n!}\sim \frac{cc'e^{-|U|}}{\mathrm{Aut}(U)\Gamma(-3/2)} n^{-5/2} e^n. 
$$

From \eqref{appeq:Fdev}, we thus obtain:
$$
\frac{f^U_n}{f_n} \longrightarrow \frac{e^{-\frac{1}{2}-|U|}}{\mathrm{Aut}(U)} = p_\infty(U), 
$$
which is precisely the first part of Theorem~B.
In order to see (as is also claimed in Theorem~B) that $p_\infty$ is a probability measure, we note that the sum over all (possibly empty) forests $U\in \cF$ given by:
$$
\sum_{U\in \cF} p_\infty(U) = e^{-1/2} \sum_{U\in \cF}\frac{e^{-|U|}}{\mathrm{Aut}(U)}
$$
is equal to 
$e^{-1/2}F(e^{-1})$
by definition of $F(z)$ (recall that $F(z)$ contains the contribution of the empty forest).
 Therefore the fact that $p_\infty$ is a probability measure is equivalent to the equation $e^{-1/2}F(e^{-1}) =1 $, which is true from~\eqref{appeq:Fdev}.

\medskip
To prove the second part of Theorem~B, we proceed once again by analysis of generating functions, together with the second moment method.
Fix a rooted (unlabeled) tree $T$. Since $T$ has $\frac{|T|!}{\mathrm{Aut}_r(T)}$ different labelings, the generating function of all labeled forests carrying a marked pendant copy of $T$ is equal to:
\begin{align}
\Big(\frac{z^{|T|}}{\mathrm{Aut}_r(T)} T(z) +Q_T(z) \Big)F(z) 
\end{align}
where the factor $T(z)$ accounts for the tree attached to the other end of the edge from which the copy of $T$ is pending, where $F(z)$ accounts for the other components, and where as before $Q_T(z)$ is a polynomial taking into account small number effects.
The expansion of this quantity when $z$ approaches $e^{-1}$ is given, using previous expansions, by:
$$
c_6 + \frac{c \cdot e^{-|T|+1/2}}{\mathrm{Aut}_r(T)} \sqrt{1-ze} + O(1-ze).
$$
By standard transfer theorems, when $n$ goes to infinity the coefficient of $z^n$ in this function, which is the number of forests of size $n$ with a \emph{marked} pendant copy of $T$,  is equivalent to:
$$
\frac{\Gamma(-1/2)}{\Gamma(-3/2)}   \frac{c \cdot e^{-|T|+1/2}}{\mathrm{Aut}_r(T)} n^{-3/2} e^n.
$$
Dividing by $f_n/n!$ and using~\eqref{appeq:fn} we get  that the \emph{average} number $\chi_1(n)$ of pendant copies of $T$ in a random forest of size $n$ is asymptotic to:
$$
\chi_1(n)\sim\frac{\Gamma(-1/2)}{c'\Gamma(-3/2)} 
\frac{e^{-|T|}}{\mathrm{Aut}_r(T)} n=
\frac{e^{-|T|}}{\mathrm{Aut}_r(T)} n,
$$
where we used that $\frac{\Gamma(-1/2)}{c'\Gamma(-3/2)}=1$ (which can be seen either by computing explicitly the expansion~\eqref{appeq:Tdev} or whithout computation by noting that $t_n= n \cdot u_n$ and applying a transfer theorem to estimate $t_n$ and $u_n$ from \eqref{appeq:Tdev} and \eqref{appeq:Udev}).

\medskip
We have thus proved that the \emph{average} number of pendant copies of $T$ in a random forest of size $n$ is equivalent to $\frac{e^{-|T|}}{\mathrm{Aut}_r(T)} n$. To prove that  this is in fact the typical value with high probability we will compute the second moment. 
 The generating function of forests with \emph{two} marked copies of the tree $T$ is of the form:
\begin{align}
\Big(\Big(\frac{z^{|T|}}{\mathrm{Aut}_r(T)}\Big)^2 \left(\frac{zd}{dz}\right) T(z) 
+R_T(z) T(z)
+S_T(z) \Big)F(z) 
\end{align}
where the first term is interpreted as a doubly rooted tree (hence $(\frac{zd}{dz}) T(z)$) to which two non-overlapping copies of $T$ are attached, where the second term takes into account overlapping copies (here we  do not need to know $R_T(z)$ explicitly, but we know it is a polynomial) 
and the polynomial $S_T(z)$ takes into account small cases for which there is ambiguity in the choice of the largest component; as before the factor $F(z)$ takes into account the forest formed by other connected components.
The expansion at $z=e^{-1}$ of this function follows from differentiating~\eqref{appeq:Tdev} 
and is given by:
$$
\frac{-c}{2} \Big(\frac{e^{-|T|}}{\mathrm{Aut}_r(T)}\Big)^2 e^{1/2}(1-ze)^{-1/2}
+O(1)\;.
$$
Using a  transfer theorem, the coefficient of $z^n$ in this expression is asymptotic to:
$$
\frac{-c}{2\Gamma(-1/2)}\Big(\frac{e^{-|T|}}{\mathrm{Aut}_r(T)}\Big)^2 e^{1/2} n^{-1/2}e^n.
$$
Dividing by $f_n/n!$ and using~\eqref{appeq:fn}, we see that the average squared number $\chi_2(n)$ of pendant copies of $T$ in a random forest of size $n$ is asymptotic to:
$$
\chi_2(n)\sim\frac{\Gamma(-3/2)}{2c'\Gamma(-1/2)} 
\Big(\frac{e^{-|T|}}{\mathrm{Aut}_r(T)}\Big)^2 n^2
=
\Big(\frac{e^{-|T|}}{\mathrm{Aut}_r(T)}\Big)^2 n^2,
$$
where one may choose either previously mentioned method to compute the explicit value of $c'$.
We have finally obtained that 
$$\chi_2(n) \sim (\chi_1(n))^2,$$ so by the second moment method (Chebyshev's inequality) this completes the proof of the second part of Theorem~B.

\subsection{More details on the example given in Remark~\ref{rem:example}}
\label{subapp:example}

Let $\tilde{\mathcal{F}_n}$ be the class of graphs defined in Remark~\ref{rem:example}, and write $k_n:=\lceil n^{2/3}\rceil$. In this section we prove that $\tilde{\mathcal{F}}_n$ is tight.

For $i\geq k\geq 1$ let $\tilde a_{i,k}$ be the number of connected graphs on $[1..i]$ that induce a clique on $[1..k]$, and such that contracting this clique gives a tree. Thus   the number of \emph{connected} graphs in our class $\tilde{\mathcal{F}_n}$ is, by definition, equal to 
$
\tilde a_{n,k_n}.
$
Note that $\tilde a_{i,k}$ equals to the number of rooted forests on $[1..i]$ with $k$ components rooted at $1,2,\dots k$. Thus ${i \choose k} a_{i,k}$ is the number of rooted forests on $[1..i]$ with $k$ components and no condition on the location of the roots, which is classically equal to ${i-1 \choose i-k} i^{i-k}$. We thus get:
$$
\tilde a_{i,k}  =  k i^{i-k-1}. 
$$

Observe that:
\begin{align}\label{appeq:ratio}
\frac{\tilde a_{i,k}}{\tilde a_{i+1,k}} 
=  \frac{i^{i-k-1}}{(i+1)^{i-k}}
= \frac{1}{i} \left(1-\frac{1}{i+1}\right)^{i-k}\;. 
\end{align}

The number $g_n$ of all elements in the class $\tilde{\mathcal{F}}_n$ is given by:
\begin{align}\label{eq:convol}
\frac{g_n}{(n-k_n)!}
 = 
\sum_{i+j=n\atop j\geq 0, i\geq k_n} 
\frac{\tilde a_{i,k_n}}{(i-k_n)!}
\times 
\frac{f_{j}}{j!}
\end{align}
where $f_j$ counts unrooted forests, as before. In this sum, $i$ is interpreted as the number of vertices in the connected component containing the clique, and we have distributed the labeling binomial ${n-k_n\choose j}$ among factors.

From \eqref{appeq:Fdev}, given $\epsilon$ we can choose $\delta$ small enough and $j_0$ large enough such that $\sum_{j\leq j_0} f_j z^j \geq e^{1/2}(1-\epsilon)$ for any $z\geq e^{-1}-\delta$.
Also, given $\delta$ and $j_0$, for $n$ large enough, we have from~\eqref{appeq:ratio} that for any $i$ larger than $n-j_0$:
$$
\frac{\tilde a_{i,k_n}/(i-k_n)!}{\tilde a_{i+1,k_n}/(i+1-k_n)!} \geq e^{-1}-\delta\;.
$$
 
We can now lower bound the sum~\eqref{eq:convol} by keeping the contribution of relatively small values of~$j$. More precisely, for $n$ large enough, we have:
\begin{align*}
\eqref{eq:convol}& 
\geq \sum_{j\leq j_0} 
\frac{\tilde a_{n-j,k_n}}{(n-j-k_n)!} \frac{f_{j}}{j!}
\\
&\geq  \frac{\tilde a_{n,k_n}}{(n-k_n)!} \sum_{j\leq j_0} (e^{-1}-\delta)^j \frac{f_j}{j!}\\
&\geq  \frac{\tilde a_{n,k_n}}{(n-k_n)!} e^{1/2} (1-\epsilon)\;.
\end{align*}
Given $\zeta$, consider $\epsilon=\zeta/2$. If $n$ is large enough and $\tilde{F}_n$ is a uniform random graph in $\tilde{\mathcal{F}}_n$, we thus have
$$
\Pr(\tilde{F}_n \text{ is connected})=\frac{\tilde a_{n,k_n}}{g_n} \leq e^{-1/2} (1-\epsilon)^{-1} \leq (1+\zeta) e^{-1/2}.
$$ 
Since this is true for every $\zeta$, the class $\tilde{\mathcal{F}}$ is tight.

\end{document}